\numberwithin{equation}{section}
\newtheorem{theorem}{Theorem}[section]
\newtheorem{proposition}[theorem]{Proposition}
\newtheorem{lemma}[theorem]{Lemma}
\newtheorem{corollary}[theorem]{Corollary}
\newtheorem{definition}{Definition}[section]
\newtheorem{exmp}{Example}[section]
\newenvironment{proof}[1][Proof]{\begin{trivlist}
\item[\hskip \labelsep {\bfseries #1}]}{\end{trivlist}}
\newenvironment{example}[1][Example]{\begin{trivlist}
\item[\hskip \labelsep {\bfseries #1}]}{\end{trivlist}}
\newenvironment{remark}[1][Remark]{\begin{trivlist}
\item[\hskip \labelsep {\bfseries #1}]}{\end{trivlist}}
\newcommand{\qed}{\nobreak \ifvmode \relax \else
      \ifdim\lastskip<1.5em \hskip-\lastskip
      \hskip1.5em plus0em minus0.5em \fi \nobreak
      \vrule height0.75em width0.5em depth0.25em\fi}
\begin{document}

\title{Inductive construction of path homology
chains}
\author{Matthew Burfitt and Tyrone Cutler}
\date{}
\maketitle

\begin{abstract}
   Path homology plays a central role in digraph topology and GLMY theory more general. Unfortunately, the computation of the path homology of a digraph $G$ is a two-step process, and until now no complete description of even the underlying chain complex has appeared in the literature.
   
    In this paper we introduce an inductive method of constructing elements of the path homology chain modules $\Omega_n(G;R)$ from elements in the proceeding two dimensions. This proceeds via the formation of what we call upper and lower \emph{extensions}, that are parametrised by certain labeled multihypergraphs which we introduce and call \emph{face multihypergraphs}.

    When the coefficient ring $R$ is a finite field the inductive elements we construct generate $\Omega_*(G;R)$. With integral or rational coefficients, the inductive elements generate at least $\Omega_i(G;R)$ for $i=0,1,2,3$. Since in low dimensions the inductive elements extended over labeled multigraphs coincide with naturally occurring generating sets up to sign, they are excellent candidates to reduce to a basis.

    Inductive elements provide a new concrete structure on the path chain complex that can be directly applied to understand path homology, under no restriction on the digraph $G$.
    We employ inductive elements to construct a sequence of digraphs whose path Euler characteristic can differ arbitrarily depending on the choice of field coefficients.
    In particular, answering an open question posed by Fu and Ivanov.
\end{abstract}

\tableofcontents

\section{Introduction}

Path homology, one of the several proposed homology theories for directed graphs $G$, is based around the idea of detecting cycles in equal length paths within a graphical structure. Its construction was laid out by Grigor'yan, Lin, Muranov, and Yau in the foundation paper \cite{Grigoryan2013}, which built on earlier work of Dimakis and M\"{u}ller-Hoissen \cite{Dimakis1994}, where elements of the dual cohomology theory were considered in the study of field theories on discrete space time.

Path homology now sits in the broader study of geometry, topology and homotopy theory of digraphs and quivers often referred to as GLMY-theory.

A primary motivation for path homology lies in the fact that the simplicial homology of any abstract simplicial complex can be realised as the path homology of a digraph which is obtained as the directed edges of a barycentric subdivision of the complex \cite{Grigor'yan2014}.
Moreover, path homology satisfies certain Eilenberg–Steenrod axioms \cite{Grigoryan2018}, and shares variations of many of the important properties enjoyed by singular and simplicial homology, including functorality, homotopy invariance \cite{Grigor'yan2014b}, and a K\"{u}nneth Theorem for the box product of digraphs \cite{Grigoryan2017}. In particular, these fundamental properties are not necessarily shared by other homology theories of digraphs.
Moreover, path homology cannot in general be obtained as the homology of a space \cite{Fu2024}, unlike those homologies derived through singular mappings such as directed cliques.

Furthermore, path homology appears as a graded submodule of the bigraded path homology \cite{Hepworth2024}, emerging as the second page of a spectral sequence whose first page is the naturally bigraded magnitude homology \cite{Hepworth2017}. Although the spectral sequence was introduced in \cite[Remark 8.7]{Hepworth2017}, it was Asao \cite{Asao2023} who first presented the relationship with path homology and provided a realisation of the path homology chains as the diagonal magnitude homology. 

Investigations into extensions of the path homology construction,  however, are far broader, leading to natural questions regarding cofibration category structures \cite{Carranza2024, Hepworth2024}, relations to simplicial homotopy theory \cite{Ivanov2024}, and generalised hypergraph homology \cite{Grbic2022} for use in data analysis.

More concretely, digraphs occur ubiquitously in the form of networks across many scientific disciplines, being fundamental descriptors in the modeling of complex system interactions.
In particular, in conjunction with persistent homology \cite{Carlsson09, Edelsbrunner2014}, path homology has begun to be used across a range of applications.
For instance, in the analysis of temporal networks \cite{Chowdhury2022}, demonstrating how path homology representatives reveal important complex sub-networks,
and deep learning \cite{Chowdhury2019},
where the structurally simpler directed clique complex cannot as easily extract important global features from the network parameters.
\footnote{We note that in some applications, a simplified definition of path homology is used that coincides with the original construction when the digraph does not contain any double edges.}

For the purposes of applications there has been much interest in efficiently computing path homology,
with persistent path homology being known to be stable with respect to perturbations in edge weighted filtrations of the digraph \cite{Chowdhury2018}. 
Yet, effective computation of path homology remains limited to low dimensions, with one of the most effective algorithms devised \cite{Dey2022} being only for the computation of path homology in dimension $1$ and depending primarily on the knowledge of low dimensional bases of the path chain complex.

More generally, a simple procedure for the computation of path homology is provided by Grigor'yan \cite[\S 1.7]{Grigoryan2022}, as a special case of persistent path homology
\cite[\S 5]{Chowdhury2018}.
However, the practicality of these algorithms is limited by the computation of a chain level base as it involves the computation of the null space of certain large matrices whose sizes grow rapidly with the number of digraph edges.

Path homology algorithms would be greatly simplified in the absence of the need to first compute directly a basis of the paths chains, as pointed out in \cite{Grigoryan2022} Problem 1.7.
In particular, improved computational speed of path homology in higher dimensions would greatly enhance the practicality of a wide range of applications.

The central problem lies in the fact that unlike simplicial homology, there is no simple general description of a path chain basis inherent in its construction.
More precisely,
chains are provided indirectly through compatibility with the differential, having no predetermined free module structure generated directly by singular mappings.
It is precisely such maps that are relied upon for more easily computable homologies of digraphs such as the directed clique complex \cite{Masulli2017, Reimann2017},
and digraph cubical homology \cite{Grigor'yan2021}, which like path homology is a homotopy invariant functor on digraphs satisfying a version of the Mayer–Vietoris exact sequence.
However, \cite{Grigor'yan2023} demonstrates that for a certain class of cubical digraphs embeddable in a directed (hyper)cube, the path chains and cubical chains coincide.

Denoting by $\Omega_n(G;R)$ the path chains in dimension $n$ with coefficients in commutative ring $R$,
a canonical basis of $\Omega_0(G;R)$ and $\Omega_1(G;R)$ is generated by all vertices and edges respectively.
The situation in dimension $2$ is made more complicated by the existence of multisquare digraphs with no canonical basis.
However, a basis can be chosen from a subset of generators corresponding to certain digraphs of the form of squares, triangles and double edges.

Restricting  to coefficients in a field,
Grigor'yan \cite{Grigoryan2022} gave a description of a basis of $\Omega_3(G;R)$ in terms of the images of the top dimensional generator of a trapezohedron under the constraints that the digraph contains no multisquares and double edges.
More generally when the digraph does not contain any multisquares, by dualising a formula for path cochains, Fu and Ivanov \cite{Fu2024} provided a basis of $\Omega_*(G;R)$ making use of a correspondence to certain graphical constructions.
In particular, using their method to obtain a digraph whose path homology Euler characteristic differs with coefficients in $\mathbb{Q}$ and $\mathbb{Z}_2$.
Which cannot be the case for the singular homology of a space.

In the present work, we make no assumptions on the structure of the digraph $G$.
Our primary construction is of structures we call face multigraphs and more generally face multihypergraphs, formed in dimension $n+1$ from elements of the path chains in dimensions $n$ and $n-1$.
Face multigraphs being sufficient when the coefficient ring has no additive torsion other than $2$.

Face multihypergraphs are principally considered together with their complete extensions by a vertex. The existence of which we show is sufficient to produce elements of $\Omega_{n+1}(G;R)$ from elements of the previous two dimensions.

Restricting further to strongly connected extensions, we establish the notion of inductive element of $\Omega_*(G;R)$, built inductively from the vertex basis of $\Omega_0(G;R)$ as strongly connected complete extensions over face multihypergraphs.
Inductive elements provide the structure for the following two central results concerning generating set of $\Omega_*(G;R)$.

\begin{theorem}[Corollary~\ref{cor:FieldGeneratorBasis}]
    The $n$-dimensional inductive elements generate $\Omega_n(G;\mathbb{Z}_p)$.
\end{theorem}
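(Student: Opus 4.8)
The plan is to argue by induction on the dimension $n$, the inductive hypothesis being that the inductive elements of dimension $k$ generate $\Omega_k(G;\mathbb{Z}_p)$ for every $k\le n$. The base case is immediate from the definitions: $\Omega_0(G;\mathbb{Z}_p)$ is free on the vertex set, and the $0$-dimensional inductive elements are by convention the vertices themselves. It is also harmless, and perhaps cleaner, to begin the induction at $n=1$, where $\Omega_1(G;\mathbb{Z}_p)$ is free on the edges and each edge $e_{ab}$ is the complete extension of the vertex $b$ along the edge $a\to b$.

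For the inductive step, fix $n\ge 0$, assume the hypothesis through dimension $n$, and let $\omega\in\Omega_{n+1}(G;\mathbb{Z}_p)$ be arbitrary. By the extension machinery established above, $\omega$ can be realized as a $\mathbb{Z}_p$-linear combination of complete extensions over face multihypergraphs whose lower and upper faces are elements of $\Omega_n(G;\mathbb{Z}_p)$ and $\Omega_{n-1}(G;\mathbb{Z}_p)$. The point at which the finiteness of $\mathbb{Z}_p$ is used is precisely here, in guaranteeing that the vertex-extension completing each face multihypergraph always exists: the relevant compatibility constraints reduce to a linear system over $\mathbb{Z}_p$ that, together with a counting argument over the finitely many allowed paths, is always solvable. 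Over $\mathbb{Z}$ or $\mathbb{Q}$ this solvability may fail in dimensions $\ge 4$, which is why the unrestricted generation statement is only claimed for $i\le 3$ in those cases. Invoking the inductive hypothesis, the faces appearing may in turn be expanded as combinations of $n$- and $(n-1)$-dimensional inductive elements, and since the extension construction is linear in its face data (with completeness maintained), this rewrites $\omega$ as a combination of complete extensions over face multihypergraphs all of whose faces are inductive elements.

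It then remains to remove the two auxiliary restrictions built into the definition of an inductive element, namely that the extension be complete and that it be strongly connected. Completeness is automatic, being exactly the condition under which the alternating-sum boundary of the extension remains a combination of allowed paths, and the face multihypergraphs produced above are assembled so that the non-allowed contributions cancel mod $p$. For strong connectivity, one observes that a complete extension over a face multihypergraph that is not strongly connected decomposes, along the partition of its vertex set into strongly connected components, into a $\mathbb{Z}_p$-linear combination of complete extensions over strongly connected face multihypergraphs; applying this splitting finitely often expresses $\omega$ in terms of $(n+1)$-dimensional inductive elements, completing the induction.

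The main obstacle is the one flagged above: proving that over $\mathbb{Z}_p$ every $\omega\in\Omega_{n+1}(G;\mathbb{Z}_p)$ really is captured by complete extensions over face multihypergraphs with lower-dimensional inductive faces, i.e.\ that the combinatorial data and the completing vertices can always be chosen so that all non-allowed boundary contributions vanish mod $p$. Once that is in hand, the remaining structure of the argument — the dimension induction, the propagation of the inductive hypothesis through the face data, and the strong-connectivity splitting — is essentially formal.
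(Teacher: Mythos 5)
There is a genuine gap, and the student is candid enough to flag it themselves: the last paragraph concedes that "the main obstacle" — showing that every $\omega\in\Omega_{n+1}(G;\mathbb{Z}_p)$ really is expressible as a $\mathbb{Z}_p$-linear combination of complete extensions over face multihypergraphs with lower-dimensional inductive faces — is not actually established, only described. That obstacle is precisely the content of Theorem~\ref{thm:BasisExtension}, which is where all the real work lives. The paper's proof of Corollary~\ref{cor:FieldGeneratorBasis} is exactly the outer induction you sketch; what it leans on, and what your proposal omits, is the construction inside Theorem~\ref{thm:BasisExtension}: starting from a \emph{basis} $B_n$ that respects the bigrading, writing each $\delta_{n,v}^h(x)$ and $\delta_{n,v}^h(x_i)$ via the \emph{unique} decompositions~\eqref{eq:n-1MinimalDecompostion} and~\eqref{eq:n-2MinimalDecompostion} in terms of $B_{n-1}$ and $B_{n-2}$, and then reading the cancellation $\partial^M_{n-1,n-1}\delta^h_{n,v}(x)=0$ as a set of pairings $x_k^{v,i}=-x_{k'}^{v,i'}$ together with, in characteristic $p\geq 3$, $p$-fold matchings $x_k^{v,i}=x_{k_1}^{v,i_1}=\cdots=x_{k_{p-1}}^{v,i_{p-1}}$, which produce the edges and hyperedges of the face multihypergraph. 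Your appeal to ``linearity of the extension construction in its face data'' sidesteps exactly this: one cannot merely expand arbitrary faces as generic linear combinations of inductive elements and expect the complete/hypergraph structure to carry through, because the construction of the edges and hyperedges depends on the \emph{uniqueness} of a basis expansion, not just on expressibility in terms of a generating set.

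You also misattribute the role played by $\mathbb{Z}_p$ versus $\mathbb{Z}$. It is not that a linear system becomes unsolvable over $\mathbb{Z}$ in dimensions $\geq 4$. Theorem~\ref{thm:BasisExtension} holds for $R=\mathbb{Z}$ as well; what fails over $\mathbb{Z}$ at dimension $4$ and beyond is that a generating set of a free $\mathbb{Z}$-module need not contain a basis, so the outer induction cannot supply bases $B_{n-1},B_{n-2}$ consisting of inductive elements beyond the range where explicit canonical bases are available (see Corollary~\ref{cor:IductiveBasisIntegral} and the discussion following it). Over a field one may always extract a basis from a spanning set, which is why the induction runs to all $n$ over $\mathbb{Z}_p$. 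The $p$-dependence enters separately, not as solvability, but in allowing the $p$-fold hyperedge matchings in the face multihypergraph; over $\mathbb{Z}$ or $\mathbb{Z}_2$ only pairings arise and face multigraphs suffice. In short: the outer induction and the strong-connectivity splitting in your proposal are fine and match the paper, but the core claim is asserted rather than proved, and the heuristics offered for why the coefficient ring matters do not reflect the actual mechanism.
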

\begin{theorem}[Corollary~\ref{cor:IductiveBasisIntegral}]
    Inductive elements generate $\Omega_3(G;\mathbb{Z})$ and $\Omega_3(G;K)$ when $K$ is a field of characteristic $0$.
\end{theorem}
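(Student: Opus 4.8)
\emph{Reduction of the characteristic zero case to the integral case.} The set of allowed regular elementary $3$-paths of $G$, and of allowed regular elementary $2$-paths, depends only on $G$ and not on the coefficient ring, and the boundary operator has coefficients $\pm 1$. Writing $T(G;\mathbb{Z})$ for the free abelian group on the non-allowed regular elementary $2$-paths and $\psi\colon\mathcal{A}_3(G;\mathbb{Z})\to T(G;\mathbb{Z})$ for the composite of $\partial$ with the projection killing the allowed $2$-paths, we have $\Omega_3(G;R)=\ker(\psi\otimes_{\mathbb{Z}}R)$ for every commutative ring $R$. Since a field $K$ of characteristic zero is flat over $\mathbb{Z}$, this yields $\Omega_3(G;K)\cong\Omega_3(G;\mathbb{Z})\otimes_{\mathbb{Z}}K$; moreover the defining data of a $\mathbb{Z}$-inductive element, having integer coefficients, equally defines an inductive element over $K$. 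Hence a $\mathbb{Z}$-generating set of $\Omega_3(G;\mathbb{Z})$ consisting of inductive elements becomes, after $\otimes_{\mathbb{Z}}K$, a $K$-generating set of $\Omega_3(G;K)$ consisting of inductive elements. We stress that this argument fails over $\mathbb{Z}_p$: that ring is not flat, $\ker(\psi\otimes\mathbb{Z}_p)$ is strictly larger than $\ker(\psi)\otimes\mathbb{Z}_p$ exactly when $\operatorname{coker}\psi$ has $p$-torsion, and it is this discrepancy that is later turned into the Euler characteristic examples; so Corollary~\ref{cor:FieldGeneratorBasis} gives no shortcut and the integral statement must be proved directly.

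\emph{The integral case.} Let $\omega=\sum_\pi c_\pi e_\pi\in\Omega_3(G;\mathbb{Z})$, the sum over allowed $3$-paths $\pi=(i,j,k,l)$. Fix a linear order on allowed $3$-paths and let $e_{ijkl}$ be the leading path occurring in $\omega$. The requirement $\partial\omega\in\mathcal{A}_2(G;\mathbb{Z})$ forces the coefficient of every non-allowed regular $2$-path in $\partial\omega$ to vanish; among the summands of $\partial e_{ijkl}=e_{jkl}-e_{ikl}+e_{ijl}-e_{ijk}$ the potentially non-allowed ones are $e_{ikl}$ (present precisely when $i\not\to k$) and $e_{ijl}$ (present precisely when $j\not\to l$), so $\omega$ must contain further $3$-paths cancelling these. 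Reading off which allowed $3$-paths are forced, together with the square, triangle and double-edge expressions for the $2$-chains they produce, yields a labeled face multihypergraph $\mathcal{H}$ matching the top of $\omega$. Applying the existence theorem for strongly connected complete extensions over face multihypergraphs produces an inductive element $\eta\in\Omega_3(G;\mathbb{Z})$ whose leading term is also $e_{ijkl}$, and one chooses $c\in\mathbb{Z}$ so that $\omega-c\,\eta$ is supported strictly below $e_{ijkl}$. Since the support of $\omega$ is finite, iterating this step terminates and writes $\omega$ as an integer combination of $3$-dimensional inductive elements.

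\emph{Main obstacle.} The delicate point is integrality at each step: one must show that the face multihypergraph $\mathcal{H}$ extracted from the leading term admits a strongly connected complete extension \emph{over $\mathbb{Z}$}, not merely over $\mathbb{Q}$ or some $\mathbb{Z}_p$. It is precisely the integral closure of such capping data that can fail in dimension $\geq 4$, producing the torsion in $\operatorname{coker}\psi$ exploited later, so the proof must use something special to dimension $3$. I expect to obtain it from the explicit generation of $\Omega_2(G;\mathbb{Z})$ by squares, triangles and double edges, via a case analysis on the local configuration at the leading $3$-path $e_{ijkl}$ --- according to whether $i\to k$ and $j\to l$ are present, whether double edges occur among $i,j,k,l$, and which of $i,j,k,l$ coincide --- showing that in every case the required extension has unit leading coefficient. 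A second, more technical, difficulty is to fix the order and complexity measure so that the subtraction $\omega-c\,\eta$ never reintroduces larger terms, i.e.\ so that $\eta$ is triangular with respect to the order; carrying this out uniformly for arbitrary $G$, in particular with multisquares allowed, is where the bulk of the argument lies.
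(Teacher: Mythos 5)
Your characteristic-zero reduction via flatness is correct and plays the same role as Lemma~\ref{lem:CoeffChange} in the paper, but your integral argument follows a genuinely different, and substantially harder, route than the paper's. The paper proves the integral statement through Theorem~\ref{thm:BasisExtension}: fix a basis $B_3$ of $\Omega_3(G;\mathbb{Z})$ respecting the bigrading $\Omega_3^{*,*}(G;\mathbb{Z})$, so that every basis element is already connected and no auxiliary linear order on paths is required. For each $x\in B_3$, decompose $\delta^h_{3,v}(x)$ minimally into $\pm B_2$ and $\delta^h_{2,u}(x_i)$ minimally into $\pm B_1$; the constraint $\partial^M_{2,2}\delta^h_{3,v}(x)=0$ (from Lemma~\ref{lem:NoPositionSwap}) forces the degree-one faces to cancel in pairs, because $\mathbb{Z}$ has no odd torsion, yielding an $h(x)$-complete face multigraph whose extension is \emph{exactly} $x$ (not just a multiple of its leading term). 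A minimal strongly connected subdivision then writes $x$ as a sum of inductive elements. The ingredient special to $n=3$ --- which you did sense you would need --- is Proposition~\ref{prop:Dim2Base}: the inductive $2$-elements (double edges, directed triangles, directed squares) contain a $\mathbb{Z}$-basis $B_2$, and $B_1=\{\pm e_{u,v}\}$ is a basis by Example~\ref{exam:VerticesAndLines}. No such statement is available one dimension higher, which is precisely why the paper stops at $n=3$ for $\mathbb{Z}$.

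Your leading-term peeling plan reintroduces exactly the difficulties the paper's argument is designed to avoid, and both of your flagged concerns are genuine gaps, not technicalities. First, the face multihypergraph cannot be extracted from the leading $3$-path $e_{ijkl}$ alone: it depends on the cancellation pattern among all paths in $\omega$ and on a choice of decomposition of the $\delta^h$-faces into $\pm B_1$, so it is unclear that $\eta$ can be constructed with a predictable leading coefficient independently of the tail of $\omega$. Second, with multisquares present there is no order on allowed $3$-paths that obviously makes $\omega - c\eta$ strictly smaller, and the paper never needs one. You would need to resolve both gaps before the staircase argument closes; the paper's route through the bigrading and Theorem~\ref{thm:BasisExtension} is cleaner and is the one to study.
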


In dimensions $0$, $1$ and $2$, for the case of extensions over face multigraphs, the inductive elements we construct coincide with the previously presented natural generators up to sign. Of course, when working over a field, any generating set can be reduced to a basis. Thus we make the following statement, which at least partially addresses Grigor'yan's Problem 2.11 from \cite{Grigoryan2022} regarding the construction of bases of $\Omega_n(G;R)$ for an arbitrary $n$ under no restrictions on the digraph.
\begin{corollary}
For any $n\geq0$ and any prime $p$, the module $\Omega_n(G;\mathbb{Z}_p)$ admits a basis of inductive elements. For any characteristic $0$ field $K$, the module $\Omega_3(G;K)$ admits a basis of inductive elements.
\end{corollary}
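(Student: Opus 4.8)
The plan is to obtain the statement as an immediate formal consequence of the two generation theorems above, together with the elementary fact that over a field every spanning set of a vector space contains a basis.

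First I would record that for a prime $p$ the ring $\mathbb{Z}_p$ is a field, so $\Omega_n(G;\mathbb{Z}_p)$ is a $\mathbb{Z}_p$-vector space. By Corollary~\ref{cor:FieldGeneratorBasis} the set $S_n$ of $n$-dimensional inductive elements spans this vector space. If $G$ has finitely many vertices then $\Omega_n(G;\mathbb{Z}_p)$ is finite dimensional and the Steinitz exchange lemma lets one extract a subset $B_n\subseteq S_n$ that is a basis; in general the same conclusion follows from a Zorn's lemma argument applied to the poset of linearly independent subsets of $S_n$ ordered by inclusion. Either way $B_n$ is a basis of $\Omega_n(G;\mathbb{Z}_p)$ all of whose members are inductive elements, which is the first assertion.

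The characteristic-zero case is handled identically: Corollary~\ref{cor:IductiveBasisIntegral} supplies a spanning set of inductive elements for the $K$-vector space $\Omega_3(G;K)$, and reducing it to a basis exactly as above yields the second assertion.

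There is essentially no obstacle to flag, since the mathematical content sits entirely in the two preceding theorems; the remaining step is a one-line appeal to linear algebra. The only point worth remarking on is why the statement is confined to $\mathbb{Z}_p$ and to characteristic-zero fields rather than, say, to $\mathbb{Z}$: over a base ring that is not a field a generating set of a free module need not contain a basis, so although inductive elements generate $\Omega_3(G;\mathbb{Z})$ by Corollary~\ref{cor:IductiveBasisIntegral}, extracting an actual $\mathbb{Z}$-basis of inductive elements is a strictly harder question that is not being claimed here.
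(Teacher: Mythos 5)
Your proposal is correct and follows the paper's own route: the two preceding corollaries (\ref{cor:FieldGeneratorBasis} and \ref{cor:IductiveBasisIntegral}) already record that the $n$-dimensional inductive elements generate $\Omega_n(G;\mathbb{Z}_p)$ and that inductive elements generate $\Omega_3(G;K)$, and the reduction of a spanning set of a vector space to a basis is exactly the elementary step invoked in the proof of Theorem~\ref{thm:BasisExtension} (parts (1) and (2)). The only minor gloss is in the characteristic-zero half: the paper does not directly assert that $K$-inductive elements span $\Omega_3(G;K)$, but instead obtains the $K$-spanning set as the image of the $\mathbb{Z}$-inductive generators under the lattice embedding $\Omega_3(G;\mathbb{Z})\hookrightarrow\Omega_3(G;\mathbb{Z})\otimes K\xrightarrow{\mu_3}\Omega_3(G;K)$ from Lemma~\ref{lem:CoeffChange}; this is invisible in your phrasing but does not affect the conclusion.
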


Finally, the necessity of the conditions required in the construction of strongly connected complete extensions over face multihypergraphs is demonstrated by their use in the construction of two important examples. 
The first of these provides a sequence of digraphs whose path homology differential with respect to an inductive basis contains entries of arbitrary multiplicity. The second, contributes the following theorem.

\begin{theorem}[Example~\ref{exam:DiffEulerOverFiniteFields}]
    For any positive integer $t\geq 2$ and fields $K$ and $K'$, there is a digraph $\mathbb{E}_t$ such that 
    \[
        \dim(\Omega_{4}(\mathbb{E}_{t};K)) =
        \begin{cases}
            1 & \text{if} \; K = \mathbb{Z}_{p_j} \: \text{for} \: j \in \{ 1,\dots,k\},
            \\
            0 & \text{otherwise} 
        \end{cases}
    \]
    where $t = p_1^{i_1}\cdots p_{k}^{i_k}$ is the prime decomposition of $t$ and
    \[
        \dim(\Omega_i(\mathbb{E}_{t};K)) = \dim(\Omega_i(\mathbb{E}_{t};K')),
    \]
   for $i \neq 4$.
    
\end{theorem}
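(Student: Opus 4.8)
The plan is to turn the statement into a computation of Smith normal forms and then to engineer a digraph realising the prescribed elementary divisors.

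\textbf{Step 1: reduction to prime fields.} For any digraph $G$ and any $n$, the path chain module is $\Omega_n(G;R)=\{v\in\mathcal{A}_n(G;R):\partial v\in\mathcal{A}_{n-1}(G;R)\}=\ker\big(\mathcal{A}_n(G;R)\xrightarrow{\ \partial\ }\mathcal{R}_{n-1}(G;R)\twoheadrightarrow\mathcal{R}_{n-1}(G;R)/\mathcal{A}_{n-1}(G;R)\big)$, a kernel of a map of \emph{free} $R$-modules whose matrix $M_n$ has integer entries depending only on $G$ (the distinguished bases being the allowed elementary $n$-paths and the non-allowed regular $(n-1)$-paths). Hence $\Omega_n(G;R)=\ker(M_n\otimes_{\mathbb Z}R)$, and for a field $K$ one has $\dim_K\Omega_n(G;K)=\#\{\text{allowed }n\text{-paths}\}-\operatorname{rank}_K(M_n)$, which depends only on $\operatorname{char}K$. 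Writing the nonzero elementary divisors of $M_n$ as $d_1\mid\dots\mid d_{r_n}$, this gives $\dim_K\Omega_n(G;K)=c_n+\#\{i:\operatorname{char}K\text{ divides }d_i\}$ with $c_n$ independent of $K$ (and the count empty in characteristic $0$). Thus it is enough to build $\mathbb E_t$ so that $(\mathrm i)$ every elementary divisor of $M_i$ is a unit for $i\neq4$, and $(\mathrm{ii})$ $M_4$ is injective over $\mathbb Q$ with elementary divisors $(1,\dots,1,t)$; equivalently $\Omega_4(\mathbb E_t;R)\cong\{r\in R:tr=0\}$ naturally, while $\Omega_i(\mathbb E_t;R)$ is free of fixed rank for $i\neq4$.

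\textbf{Step 2: the construction.} I would realise $\mathbb E_t$ as a strongly connected complete extension by a vertex over a face multihypergraph $\mathcal H_t$ in dimension $4$, assembled from inductive data in dimensions $2$ and $3$, chosen so that its gluing (coherence) system is an integer matrix of Smith type $\operatorname{diag}(1,\dots,1,t)$. Concretely, take $\mathcal H_t$ to be a cyclic ``necklace'' of elementary twisting blocks --- the same blocks that drive the companion example (the sequence of digraphs whose inductive differential carries entries of arbitrary multiplicity) --- wrapped around a cycle so that the monodromy of the coherence system around the necklace is multiplication by $t$; using the prime factorisation $t=p_1^{i_1}\cdots p_k^{i_k}$ one can keep $\mathbb E_t$ small. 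With this design, a complete extension over $\mathcal H_t$ exists if and only if the extension parameter $x\in R$ satisfies $tx=0$. One then checks directly that $\mathbb E_t$ has no multisquares and that $\Omega_i(\mathbb E_t;R)=0$ for $i\geq5$, so that $\mathbb E_t$ is effectively $4$-dimensional.

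\textbf{Step 3: the computations.} For $i=0,1$ the canonical vertex and edge bases give the (field-independent) rank; for $i=2$ the absence of multisquares means the square/triangle/double-edge generators form a basis over any ring, so again the rank is field-independent; for $i=3$ one combines Corollary~\ref{cor:IductiveBasisIntegral} with a direct verification that $\operatorname{coker}(M_3)$ is torsion-free (no elementary divisor of $M_3$ is a nonunit), so $\operatorname{rank}_K\Omega_3(\mathbb E_t;K)$ is independent of $K$; and $\Omega_i=0$ for $i\geq5$. For $i=4$ and $K=\mathbb Z_p$, Corollary~\ref{cor:FieldGeneratorBasis} shows $\Omega_4(\mathbb E_t;\mathbb Z_p)$ is spanned by $4$-dimensional inductive elements, and by construction the only one that can be nonzero is the complete extension over $\mathcal H_t$, which exists precisely when $p\mid t$; this yields $\dim\leq1$, with equality when $p\mid t$ because the extension is then a nonzero chain. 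For $K$ of characteristic $0$ the rational reduction of Step~1 gives $\dim=0$. Altogether $\dim_K\Omega_4(\mathbb E_t;K)=1$ when $\operatorname{char}K$ divides $t$ and $0$ otherwise, which is the asserted formula, while the dimensions in all other degrees are field-independent.

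\textbf{Main obstacle.} The delicate points are $(\mathrm a)$ proving the necklace of twisting blocks has coherence system of Smith type exactly $(1,\dots,1,t)$ --- i.e.\ that the block monodromies compose to multiplication by $t$ with no spurious prime factors or multiplicities --- and $(\mathrm b)$ the \emph{upper} bound $\dim\Omega_4(\mathbb E_t;\mathbb Z_p)\leq1$: one must rule out, by an explicit analysis of the allowed elementary $4$-paths of $\mathbb E_t$, every other $\partial$-invariant combination, and it is precisely here that the generation theorem for inductive elements is indispensable, since it reduces the search to a finite checkable family. Keeping the torsion of $\Omega_3(\mathbb E_t;\mathbb Z)$ (and the vanishing of $\Omega_{\geq5}$) under control is a secondary obstacle of the same flavour.
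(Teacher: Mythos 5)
Your Step~1 reduction to elementary divisors of an integer matrix $M_n$ is a clean and correct reformulation of what the paper does implicitly: since $\Omega_n(G;R)$ is the kernel of an integer matrix tensored with $R$, the dimension over a field $K$ depends only on $\operatorname{char}K$ through the count of elementary divisors that vanish in $K$, and the goal is to engineer $M_4$ of Smith type $(1,\dots,1,t)$ while keeping all $M_i$ for $i\neq4$ with unit elementary divisors. Your Step~2, insofar as it is pinned down, also points at the right mechanism---a cyclic ``necklace'' of trapezohedron-like blocks all sharing the directed square $S^A$ in their boundaries, so that the coherence condition around the cycle forces $t\cdot S^A=0$, realisable only when $\operatorname{char}K$ divides $t$.

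However, there is a fatal error in Steps~2--3: you assert that one ``checks directly that $\mathbb{E}_t$ has no multisquares,'' and then invoke the absence of multisquares to get a field-independent basis in dimension~$2$. This cannot be right. Fu and Ivanov prove that any digraph \emph{without} multisquares satisfies $\chi^{\mathbb{Q}}(G)=\chi^{\mathbb{Z}_p}(G)$ for all odd primes $p$; a multisquare-free $\mathbb{E}_t$ could therefore never exhibit the desired discrepancy in Euler characteristic, and the theorem you are trying to prove would be false. The paper's digraph $\mathbb{E}_t$ does contain multisquares (for instance, for $t\geq3$ there is a multisquare between $u_1^A$ and $H$ through the $t$ intermediate vertices $v_i^{B_1}$), and the paper states explicitly that allowing multisquares is \emph{essential} for this example. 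What actually makes the face multihypergraph with $t$ copies of $S^A$ possible is precisely the multisquare structure, which permits a single $2$-chain $S^A$ to appear $t$ times among the faces of the cycle $E_1,\dots,E_t$. Field-independence of $\dim\Omega_2$ then comes not from the absence of multisquares but from Proposition~\ref{prop:Dim2Base}: a basis is still obtained by choosing directed squares within each multisquare, and that choice has the same size over any field. Your plan would collapse at the point where you need a field-dependent $\Omega_4$ but have built a digraph where Fu--Ivanov's theorem guarantees field-independence.

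A secondary shortfall is that Step~2 never actually exhibits a digraph or verifies that its coherence system has Smith type exactly $(1,\dots,1,t)$; the paper does this by explicitly constructing the trapezohedra with top faces $E_1,\dots,E_t$, showing that the only strongly connected complete face multihypergraph on them (up to sign and mutation) is a $t$-cycle carrying the hyperedge $(\{S^A,\dots,S^A\},v^A)$ of size $t$, and then enumerating field-independent bases of $\Omega_i$ for $i\leq3$. Your argument-by-rank in Step~3 for $i=3$ (torsion-freeness of $\operatorname{coker} M_3$) is a reasonable alternative to that enumeration, but it still needs a concrete digraph on which to run.
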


This resolves the second open problem of Fu and Ivanov contained in the introduction of \cite{Fu2024} regarding the existence of digraphs whose path Euler characteristic is different with $\mathbb{Q}$ and $\mathbb{Z}_p$ coefficients for $p \geq 3$.

\section{Background}

We first provide the necessary background material required for the rest of the paper.
Throughout this work, $\mathbb{Z}$ is the ring of integers, $\mathbb{Z}_p$ for a prime number $p$ is the finite field with $p$ elements, and $R$ denotes an arbitrary commutative ring with a unit.

\subsection{Algebraic preliminaries}

A \emph{basis} of a free module $F$ over $R$ is a family $\{f_i\mid i\in I\}$ of distinct elements $f_i \in F$, where $i$ runs over the members of some indexing set $I$, such that every element $f \in F$ can be uniquely expressed as a \emph{linear combination}
\[
    f = \sum_{i\in I} \alpha_i f_i
\]
where each \emph{coefficient} $\alpha_i \in R$ and at most finitely many $\alpha_i \neq 0$. 
In particular, every free $R$-module has a basis.

Let $V$ be a vector space over a field $K$, with basis $B=\{v_i\}_{i\in I}$ for some set $I$.
The \emph{lattice} of $B$ in $V$ consists of all points in $V$ that can be obtained as a linear combination of elements of $B$ with integer coefficients.

\subsection{Directed graphs, labeled hypergraphs, and labeled multihypergraphs}

A \emph{digraph} $G$ is a pair $(V_G,E_G)$, whose \emph{vertices} are elements of the set $V_G$ and whose \emph{edges} are element of the set
\[
    E_G \subseteq \{ (u,v) \in V_G \times V_G \: | \: u \neq v \}.
\]
Throughout this work we assume that $G$ is an arbitrary digraph, unless stated otherwise.
We also denote edges $(u,v) \in E_G$ by $u \to v$.
A digraph $G$ is called \emph{finite} if the set $V_G$ is finite. A \emph{subdigraph} of a digraph $G$ is a digraph $H$ such that
\[
    V_H \subseteq V_G 
    \;\;\; \text{and} \;\;\;
    E_H \subseteq E_G.
\]

A \emph{multihypergraph} $M$ consists of a pair $(V_M,E_M)$, whose \emph{vertices} are elements of the set $V_M$ and whose \emph{hyperedges} are elements of the multiset $E_M$, all of whose elements consist of subsets of $V_M$.
We refer to elements of $E_M$ with multiplicity $2$ as \emph{edges} of $M$.
A multihypergraph $M$ is called a \emph{multigraph} if all elements of $E_M$ have multiplicity $2$.
A multigraph $M$ is called a \emph{graph} if each member of $E_M$ has multiplicity $1$.

A \emph{vertex labeled multihypergraph} with label set $L_V$ is a multigraph $M$ together with a function $l_V \colon V_M \to L_V$.
Similarly, an \emph{edge labeled multihypergraph} with label set $L_E$ is a multihypergraph $M$ together with a function $l_E\colon E_M \to L_E$.
A multihypergraph that is both vertex labeled and edge labeled is called a \emph{labeled multihypergraph}.

A multihypergraph is said to be \emph{connected} if for any distinct vertices $u,w \in V_M$, there exists a positive integer $t$ and a sequence of hyperedges
\[
    u\in\{v_1^1,\dots, v_{m_1}^1\},\{v_1^2,\dots v_{m_2}^2\},\dots,\{v_1^{t-1},\dots v_{m_{t-1}}^{t-1}\},v\in\{v_1^t,\dots,v_{m_t}^t\} \in E_M
\]
such that $\{v_1^{i},\dots, v_{m_1}^{i}\} \cap \{v_1^{i+1},\dots, v_{m_1}^{i+1}\} \neq \emptyset$ for $i=1,\dots,t-1$.
The same terminology applies to labeled multihypergraphs, so that one such is connected in case the underlying multihypergraph is.

\subsection{Path homology}

We detail here the construction of path homology as laid out in the foundational paper \cite{Grigoryan2013}.
In the next subsection we provide an alternative characterisation of the path chain modules in terms of diagonal magnitude homology.
Throughout this section, let $G$ be a digraph and $n$ a non-negative integer, unless stated otherwise.

An \emph{elementary $n$-path} ($n\geq0$) in a set $V$ is a sequence $v_0,\dots,v_n \in V$, which we denote $e_{v_0,\dots,v_n}$.
Define $\Lambda_n(V;R)$ to be the free $R$-module generated by all elementary $n$-paths in $V$.
In addition, set $\Lambda_{n}(V;R) = 0$ for $n<0$.
Then every $x\in \Lambda_n(V;R)$ has a unique expression
\begin{equation}\label{eq:UniqueChainFormInitial}
    x = \sum_{v_0,\dots,v_n \in V} \alpha_{v_0,\dots,v_n} e_{v_0,\dots,v_n}
\end{equation}
where each $\alpha_{v_0,\dots,v_n} \in R$ and at most finitely many $\alpha_{v_0,\dots,v_n}\neq 0$.

Define maps $\partial_{n,i}^P\colon \Lambda_n(V;R) \to \Lambda_{n-1}(V;R)$ for each $i=0,\dots,n$ by linearly extending
\begin{equation*}
    \partial_{n,i}^P(e_{v_0,\dots,v_n}) = e_{v_0,\dots,\hat{v}_i,\dots,v_n}
\end{equation*}
where $v_0,\dots,\hat{v}_i,\dots,v_n$ denotes the sequence $v_0,\dots,v_n$ with the element $v_i$ removed.
The graded module $\Lambda_*(V;R)$ becomes a chain complex $(\Lambda_*(V;R),\partial^P_*)$ with differential
\[
    \partial^P_n = \sum_{i=0}^n (-1)^i\partial^P_{n,i}.
\]
We call $\partial^P_n$ the \emph{path differential}.
Clearly, the chain complex $(\Lambda_*(V;R),\partial^P_*)$ has trivial homology in all but degree $0$.

An elementary $n$-path $e_{v_0,\dots,v_n}\in \Lambda_n(V;R)$ is called \emph{regular} if $v_{i-1} \neq v_i$ for every $i=1,\dots,n$, and \emph{irregular} otherwise. Denote by $\mathcal{I}_n(V;R)$ the free $R$-module generated by the set of elementary irregular $n$-paths and define a graded module $\mathcal{R}_*(V;R)$ by
\[
    \mathcal{R}_n(V;R) = \Lambda_n(V;R) / \mathcal{I}_n(V;R).
\]
The path differential $\partial^P_*$ descends to the quotient and $(\mathcal{R}_*(V;R), \partial^P_*)$ becomes a chain complex. Still, $(\mathcal{R}_*(V;R), \partial^P_*)$ like $(\Lambda_*(V;R), \partial^P_*)$ has trivial homology in all but degree $0$.

An \emph{(allowed) $n$-path} in a digraph $G=(V_G,E_G)$ is an elementary $n$-path $e_{v_0,\dots,v_n}$ in $V_G$ such that
\[
    (v_{i-1},v_i) \in E_G
\]
for each $i = 1, \dots, n$. 
The allowed paths span a submodule of $\Lambda_*(V_G;R)$ which is mapped injectively into $\mathcal{R}_*(V_G;R)$ by the quotient projection. Let $\mathcal{A}_*(G;R)\subseteq \mathcal{R}_*(V_G;R)$ be the image of this submodule. Following a standard abuse, we call also the members of $\mathcal{A}_*(G;R)$ \emph{allowed paths}, and will denote the cosets in $\mathcal{A}_*(G;R)$ by their unique allowed representatives. A fact that we use repeatedly in the sequel is that $\mathcal{A}_*(G;R)$ is a free $R$-module.

On the other hand, the graded module $\mathcal{A}_*(G;R)$ is not generally a subcomplex of $(\mathcal{R}_*(V_G;R),\partial^P_*)$, as it need not be the case that $\partial^P_n(A_n(G;R))\subseteq A_{n-1}(G;R)$. 
Therefore, we pass to the submodule $\Omega_*(G;R)$ of $\mathcal{A}_{n}(G;R)$ defined to be
\[
    \Omega_n(G;R) = \{ x \in \mathcal{A}_n(G;R) \: | \: \partial^P_{n}(x) \in \mathcal{A}_{n-1}(G;R) \}.
\]
By construction, $\Omega_*(G;R)$ is the smallest submodule of $\mathcal{A}_{n}(G;R)$ on which $\partial^P_n$ is a differential. Following the usual convention we assume that $\Omega_n(G;R)=0$ for $n < 0$.

\begin{definition}
    We call the chain complex $(\Omega_*(G;R), \partial^P_*)$ the \emph{path chain complex} of the digraph $G$ with coefficients in $R$.
    An element of $\Omega_n(G;R)$ is called a \emph{path chain} of dimension $n$.
    The homology $H^P_*(G;R)$ of $(\Omega_*(G;R), \partial^P_*)$ is called the \emph{path homology} of the digraph $G$ with coefficients in $R$.
\end{definition}

We note that similar to the expression in equation~\eqref{eq:UniqueChainFormInitial}, it remains the case that each $x\in \Omega_n(G;R)$ can be written uniquely in the form
\begin{equation}\label{eq:UniqueChainForm}
    x = \sum_{e_{v_0,\dots,v_n} \in P^G_n} \alpha_{v_0,\dots,v_n} e_{v_0,\dots,v_n}
\end{equation}
where $P^G_n$ is the set of all $n$-paths of $G$ and $\alpha_{v_0,\dots,v_n} \in R$ with at most finitely may $\alpha_{v_0,\dots,v_n}\neq 0$.

\subsection{Magnitude homology and path homology}\label{sec:MagnitudeHomology}

Throughout this section, $G$ is a digraph, $n$ a non-negative integer, and $l$ a non-negative real number, unless stated otherwise.

Magnitude homology was introduced by Hepworth and Willerton \cite{Hepworth2017} as a homology theory for graphs.
The original definition has since been broadly generalised to quasi-metric spaces, as presented here, and further to enriched categories \cite{Leinster2021}.
The relationship between the magnitude and path homologies of a digraph was first presented by Asao \cite{Asao2023},
making use of a spectral sequence identified by Hepworth and Willerton. The first page of the spectral sequence coincides with the magnitude homology and the second page contains the path homology along a row.
Asao showed, moreover, that all pages of this spectral sequence past the first are homotopy invariants of the digraph. The modules on the second page constitute what is now known as bigraded path homology \cite{Hepworth2024}. Here we focus on the path homology chains described in the previous section and details required later in this work.

A  \emph {(extended) quasi-metric space} $(X,d)$ is a set $X$ together with a function
\[
    d \colon X \times X \to [0,\infty]
\]
such that
\begin{enumerate}[(1)]
\item $d(x_1,x_1)=0$,
\item $d(x_1,x_3)\leq d(x_1,x_2)+d(x_2,x_3)$ and
\item $d(x_1,x_2)=d(x_2,x_1)=0$ implies that $x_1=x_2$
\end{enumerate}
for all $x_1,x_2,x_3 \in X$.

Thus let $(X,d)$ be a quasi-metric space. 
We denote by $\langle x_0,\dots,x_n\rangle$ an $(n+1)$-tuple $(x_0,\dots,x_n) \in X^{n+1}$ for which $x_{i-1}\neq x_i$ for all $i=1,\dots,n$.
We write
\begin{equation}\label{eq:length}
    \ell\langle x_0,\dots,x_n\rangle=\sum^{n}_{i=1}d(x_{i-1},x_i),
\end{equation}
and call this quantity the \emph{length} of $\langle x_0,\dots,x_n\rangle$.
Define free $R$-modules $\text{C}^M_{n,l}(X;R)$ by
\begin{equation*}
    C^M_{n,l}(X;R) =
    R
    \left[
    \left\{
    \langle x_0, \dots, x_n \rangle
    \mid
    \ell\langle x_0,\dots,x_n\rangle = l
    \right\}
    \right]
\end{equation*}
for $n\geq0$, with $C^M_{n,l}(X;R) = 0$ for $n<0$.
For $i=1,\dots,n-1$ define $\partial_{i,n}^M\colon C^M_{n,l}(X;R) \to C^M_{n-1,l}(X;R)$ by linearly extending
\begin{equation*}
    \partial_{n,l,i}^M\langle x_0,\dots,x_n\rangle =
    \begin{cases} 
        \langle x_0,\dots,x_{i-1},x_{i+1},\dots, x_n \rangle
        &
        \text{if}\;\; d(x_{i-1},x_i) + d (x_i,x_{i+1}) = d(x_{i-1}, x_{i+1}), \\
        0
        &
        \text{otherwise}
    \end{cases}
\end{equation*}
and set
\begin{equation*}
    \partial_{n,l}^M = \sum_{i=1}^{n-1} (-1)^i \partial_{n,l,i}^M
\end{equation*}
making $(C^M_{*,l}(X;R),\partial^M_{*,l})$ a chain complex for each $l \in \mathbb{R}$.

\begin{definition}
    For $l\in \mathbb{R}$ define the \emph{magnitude homology} $H^M_{*,l}(X;R)$ to be the homology of the chain complex $(C^M_{*,l}(X;R), \partial_{*,l}^M)$.
\end{definition}

A digraph $G$ comes furnished with a natural quasi-metric $d_G$ given by
\begin{equation}\label{eq:GraphMetric}
d_G(u,v)=\min\{n\geq0\mid \exists \: e_{u=v_0,\dots,v_n=v}\in P^G_n\}
\end{equation}
with the understanding that $d_G(u,v)=\infty$ if there is no allowed path in $G$ from $u$ to $v$. We will generally suppress this quasi-metric from notation, writing $C^M_{n,l}(G;R)$ and $H^M_{n,l}(G;R)$ for the magnitude chains and magnitude homology of the quasi-metric space $(G,d_G)$. 
Note that in this case we need only consider integer values of $l$ to obtain all information about the magnitude homology.

The magnitude homology $H^M_{n,n}(G;R)$ for $n\geq 0$ is called the \emph{diagonal magnitude homology} of the digraph $G$. As $C^M_{n,l}(G;R)=0$ for $l<n$, it holds that
\begin{equation}\label{eq:DiagonalKernal}
    H^M_{n,n}(G;R) =\ker(\partial^M_{n,n}).
\end{equation}

\begin{lemma}[\cite{Asao2023} Lemma 6.8]\label{lem:PathChianKernel}
    The map $\phi_n \colon \mathcal{A}_n(G;R)\to C^M_{n,n}(G;R)$ given by linearly extending
    \[
        \phi_n(e_{v_0,\dots,v_n}) = \langle v_0,\dots,v_n \rangle
    \]
    is an isomorphism of $R$-modules and induces an 
    isomorphism
    \[
        \phi_n \colon \Omega_n(G;R)\to H^M_{n,n}(G;R).
    \]
\end{lemma}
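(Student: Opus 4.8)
The plan is to produce a map in the opposite direction to $\phi_n$ and check it is a two-sided inverse, then identify where $\Omega_n$ goes. First I would observe that $\phi_n$ as defined on $\Lambda_n(V_G;R)$ (or rather on allowed paths) is literally a relabelling: an allowed $n$-path $e_{v_0,\dots,v_n}$ has $v_{i-1}\neq v_i$, so the tuple $\langle v_0,\dots,v_n\rangle$ is a legitimate generator of the magnitude chains, and since every consecutive pair is an edge we have $d_G(v_{i-1},v_i)=1$, hence $\ell\langle v_0,\dots,v_n\rangle=n$, so it lands in $C^M_{n,n}(G;R)$. Conversely, a generator $\langle x_0,\dots,x_n\rangle$ of $C^M_{n,n}(G;R)$ has length $n$ and $n$ summands each at least $1$ (as $x_{i-1}\neq x_i$ forces $d_G(x_{i-1},x_i)\geq 1$), so every $d_G(x_{i-1},x_i)=1$, which is exactly the condition that $(x_{i-1},x_i)\in E_G$; thus $e_{x_0,\dots,x_n}$ is an allowed $n$-path. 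This gives a bijection between the two generating sets, so linear extension yields the $R$-module isomorphism $\phi_n\colon\mathcal{A}_n(G;R)\to C^M_{n,n}(G;R)$.

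Next I would compare the two differentials under this identification, in degree $n$ landing in degree $n-1$. Take an allowed path $e_{v_0,\dots,v_n}$. For the internal indices $i=1,\dots,n-1$, the magnitude face $\partial^M_{n,n,i}$ keeps $\langle v_0,\dots,\hat v_i,\dots,v_n\rangle$ precisely when $d_G(v_{i-1},v_i)+d_G(v_i,v_{i+1})=d_G(v_{i-1},v_{i+1})$, i.e. $2=d_G(v_{i-1},v_{i+1})$, which says there is no edge $v_{i-1}\to v_{i+1}$, i.e. $e_{v_0,\dots,\hat v_i,\dots,v_n}$ is \emph{not} allowed; otherwise that face is zero. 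Meanwhile $\partial^P_n$ always produces the full alternating sum $\sum_{i=0}^n(-1)^i e_{v_0,\dots,\hat v_i,\dots,v_n}$, including the endpoint faces $i=0,n$ which have no magnitude analogue. The upshot, which is the computational heart of the argument, is that for $x\in\mathcal{A}_n(G;R)$ the element $\partial^P_n(x)$ lies in $\mathcal{A}_{n-1}(G;R)$ if and only if, after collecting terms, all the non-allowed faces cancel; and in that case what survives is exactly the part corresponding to $\phi_{n-1}\bigl(\partial^P_n(x)\bigr)$, while $\partial^M_{n,n}\bigl(\phi_n(x)\bigr)$ captures precisely the complementary non-allowed faces. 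More carefully: one shows $\phi_{n-1}\circ\partial^P_n$ and $\partial^M_{n,n}\circ\phi_n$ together account for the two halves of the same alternating face sum, so that $\partial^M_{n,n}(\phi_n(x))=0$ is equivalent to $\partial^P_n(x)\in\mathcal{A}_{n-1}(G;R)$, i.e. to $x\in\Omega_n(G;R)$. Combining this with \eqref{eq:DiagonalKernal}, namely $H^M_{n,n}(G;R)=\ker(\partial^M_{n,n})$, gives $\phi_n\bigl(\Omega_n(G;R)\bigr)=\ker(\partial^M_{n,n})=H^M_{n,n}(G;R)$, and restricting the already-established isomorphism $\phi_n$ to this submodule yields the claimed isomorphism $\Omega_n(G;R)\xrightarrow{\ \cong\ }H^M_{n,n}(G;R)$.

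The step I expect to be the main obstacle is the bookkeeping in the middle paragraph: disentangling, index by index, which summands of the alternating face sum $\sum_{i=0}^n(-1)^i e_{v_0,\dots,\hat v_i,\dots,v_n}$ are ``allowed'' (and hence retained by $\partial^P$ but killed by $\partial^M$) versus ``non-allowed'' (retained by $\partial^M$ but forced to cancel in pairs for $x$ to be in $\Omega_n$), and making sure the signs match up so that $\phi$ is genuinely a chain-level comparison rather than merely a degreewise module isomorphism. One subtlety to handle with care is that $\partial^M$ only ranges over internal indices $1,\dots,n-1$ whereas $\partial^P$ includes $i=0$ and $i=n$; the endpoint faces $e_{v_1,\dots,v_n}$ and $e_{v_0,\dots,v_{n-1}}$ of an allowed path are automatically allowed, so they contribute to the allowed part and do not obstruct membership in $\Omega_n$, which is consistent. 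Once this correspondence is pinned down the rest is formal, and indeed this is essentially the content of \cite[Lemma 6.8]{Asao2023}, so I would present the argument at the level of detail above and cite Asao for the verification of signs.
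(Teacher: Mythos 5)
Your proposal follows the same route as the paper: first identify $\phi_n$ as a bijection on generators (allowed $n$-paths $\leftrightarrow$ tuples of length $n$, using that each $d_G(v_{i-1},v_i)\geq 1$ forces equality), and then compare the two differentials by splitting $\partial^P_n$ into allowed and non-allowed faces and observing that the magnitude faces $\partial^M_{n,n,i}$ retain exactly the non-allowed interior faces, so $\partial^M_{n,n}(\phi_n(x))=0$ is equivalent to $\partial^P_n(x)\in\mathcal{A}_{n-1}(G;R)$. This is the paper's argument; both treatments leave the sign/cancellation bookkeeping at roughly the same level of detail.
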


\begin{proof}
    When $n=l$, condition~\eqref{eq:length} together with the structure of $G$ as a quasi-metric space implies that for any $\langle v_0,\dots,v_n \rangle \in C^M_{n,n}(G;R)$ we have $d(v_{i-1},v_i)=1$ for $i=1,\dots,n$, so $(v_{i-1},v_{i}) \in E_G$.
    Therefore,
    $\phi_n \colon \mathcal{A}_n(G;R)\to C^M_{n,n}(G;R)$
    is a well defined isomorphism, as $\mathcal{A}_n(G;R)$ and $ C^M_{n,n}(G;R)$ are free $R$-modules on generators indexed by precisely the same sequences of vertices.
    
    Now, for any allowed path $e_{v_0,\dots,v_n}$ we have that
    \begin{align*}
        & \partial^P_{n}(e_{v_0,\dots,v_n})
        =
        e_{v_1,\dots,v_n} + (-1)^n e_{v_0,\dots,v_{n-1}} +
        \sum_{i=1}^{n-1} (-1)^ie_{v_0,\dots,\hat{v}_i,\dots,v_n}
        \\ &=
        e_{v_1,\dots,v_n} + (-1)^n e_{v_0,\dots,v_{n-1}} +
        \sum_{\substack{i=1,\dots,n-1 \\ (v_{i-1},v_{i+1}) \in E_G}}
        (-1)^i e_{v_0,\dots,\hat{v}_i,\dots,v_n}
        +
        \sum_{\substack{i=1,\dots,n-1 \\ (v_{i-1},v_{i+1}) \notin E_G}}
        (-1)^i e_{v_0,\dots,\hat{v}_i,\dots,v_n}
    \end{align*}
    where the last summation contains all non-allowable elementary paths which appear in the expression.
    Moreover, these are the only elementary $(n-1)$-paths whose images under $\phi_{n-1}$ can correspond to non-trivial summands $\langle v_0,\dots,\hat{v}_i,\dots,v_n \rangle$ in the image of $\partial_{n,n,j}^M(\phi_n(e_{v_1,\dots,v_n}))$ for some $j=1,\dots,n-1$. 
    By linearly extending the above calculation, we obtain that any $x\in \mathcal{A}_n(G;R)$ satisfies $\partial_n^P(x) \in \Omega_{n-1}(G;R)$ only when $\partial_{n,n}^M(x) = 0$.
    \qed
\end{proof}

In the remainder of this work, making use of the isomorphism~\eqref{eq:DiagonalKernal}, we treat path homology chains $\Omega_n(G;R)$ as the kernel of $\partial^M_{n,n}$ under the identification 
provided by the map $\phi_n$ from Lemma~\ref{lem:PathChianKernel}.

\subsection{Basis constructions for \texorpdfstring{$\Omega_n(G;R)$}{path chains}}\label{sec:LowDimBasis}

Most of the material covered in this section is contained in \cite{Grigoryan2022}.
However, some of the content is from, or was originally presented in, other works that we cite at the corresponding parts of the section.
Throughout this section, let $n$ be a non-negative integer, $G$ a digraph, and $R$ a commutative ring with a unit.

For any vertex $v\in V_G$, $e_v$ is an allowed path and $\partial^P_0e_v = 0$.
Hence,
\[
    \{ e_v \: | \: v \in V_G \} \; \text{is a basis of} \:\: \Omega_0(G;R).
\]
Similarly, for any $(u,v)\in E_G$, the element $e_{u,v}$ is an allowed path and $\partial^P_1(e_{u,v}) = e_u - e_v \in \mathcal{A}_0(G;R)$.
Therefore, 
\[
    \{ e_{u,v} \: | \: (u,v)\in E_G  \} \; \text{is a basis of} \:\: \Omega_1(G;R).
\]

The first non-straightforward case occurs when $n=2$.
Let $v_0,v_1,v'_1,v_2 \in E_G$.
When $(v_0,v_1) \in E_G$ and $(v_1,v_0) \in E_G$ we call $e_{v_0,v_1,v_0}$ a \emph{double edge}.
We call $e_{v_0,v_1,v_2}$ a \emph{directed triangle} if $(v_0,v_1),(v_1,v_2),(v_0,v_2) \in E_G$.
Finally, we call
$e_{v_0,v_1,v_2}-e_{v_0,v'_1,v_2}$ a \emph{directed square} if $(v_0,v_1),(v_0,v'_1),(v_1,v_2),(v'_1,v_2) \in E_G$, $v_0 \neq v_2$, $v_1 \neq v'_1$, and $(v_0,v_2) \notin E_G$.
It is straightforward to check that double edges, directed squares, and directed triangles are elements of $\Omega_2(G;R)$.
Analogously, a digraph $G$ is said to contain a \emph{double edge}, a \emph{directed triangle}, or a \emph{directed square} (at the same associated vertices as above)
if it contains the respective subdigraphs
\begin{center}
    \tikz {
        \node (b) at (0,0) {$\:$};
        \node (u) at (0,1) {$v_0$};
        \node (v) at (1.5,1) {$v_1$};
        \draw[->] (u) to [out=40,in=140] (v);
        \draw[->] (v) to [out=220,in=320] (u);
        }
        \;\;\;\;\;\;\;\;\;\;\;\;\;\;\;
        \tikz {
        \node (b) at (0,0) {$\:$};
        \node (v0) at (0,0.25) {$v_0$};
        \node (v1) at (1.5,0.25) {$v_1$};
        \node (v2) at (1.5,1.75) {$v_2$};
        \draw[->] (v0) -- (v1);
        \draw[->] (v1) -- (v2);
        \draw[->] (v0) -- (v2);
        }
        \;\;\;\;\;\;\;\;\;\;\;\;\;\;\;
        \tikz {
        \node (v0) at (0,0) {$v_0$};
        \node (v1) at (-1,1) {$v_1$};
        \node (v-1) at (1,1) {$v'_1$};
        \node (v2) at (0,2) {$v_2$};
        \draw[->] (v0) -- (v1);
        \draw[->] (v0) -- (v-1);
        \draw[->] (v1) -- (v2);
        \draw[->] (v-1) -- (v2);
        }
\end{center}
and, in the last case, provided also that there is no edge $v_0 \to v_2$.
Moreover, a digraph $G$ is said to contain \emph{no double edge}, \emph{no directed triangle}, or \emph{no directed square} if it does not contain any subdigraphs of the above respective forms.

\begin{definition}
    Let $v_0,v_2\in V$ such that $v_0 \neq v_2$, $(v_0,v_2)\notin G$ and,
    \[
        S_{v_0,v_2} = \{ v_1 \in V \: | \: (v_0,v_1), (v_1,v_2) \in E_G \}.
    \]
    When $|S_{v_0,v_2}| \geq 3$, we say that $G$ contains a \emph{multisquare} between $v_0$ and $v_2$.
    If $G$ does not contain any multisquares between any pair of vertices, then we say $G$ \emph{contains no multisquares}.
\end{definition}

Variations of the following proposition have been proved for both field and integral coefficients in \cite[Proposition 4.2]{Grigoryan2013}, \cite[Proposition 2.9]{Grigor'yan2014b}, and \cite[Theorem 1.8]{Grigoryan2022}. We include a proof at the end of the next section.

\begin{proposition}[\cite{Grigor'yan2014b, Grigoryan2013, Grigoryan2022}]\label{prop:Dim2Base}
    The double edges, directed triangles, and directed squares generate $\Omega_2(G;R)$.
    Once a basis of directed squares within each multisquare is chosen, these directed squares, double edges, directed triangles, and a choice up to sign of directed squares not contained in a multisquare form a basis of $\Omega_2(G;R)$.
\end{proposition}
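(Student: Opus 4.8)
The plan is to work from the defining description $\Omega_2(G;R)=\{x\in\mathcal A_2(G;R)\mid\partial^P_2(x)\in\mathcal A_1(G;R)\}$ and to split $\mathcal A_2(G;R)$ according to the combinatorial type of its basis elements. First I would record that for an allowed $2$-path $e_{v_0,v_1,v_2}$,
\[
\partial^P_2(e_{v_0,v_1,v_2})=e_{v_1,v_2}-e_{v_0,v_2}+e_{v_0,v_1},
\]
in which $e_{v_0,v_1}$ and $e_{v_1,v_2}$ are always allowed, whereas the middle term $e_{v_0,v_2}$ is $0$ in $\mathcal R_1(V_G;R)$ if $v_0=v_2$, is allowed if $(v_0,v_2)\in E_G$, and is a regular non-allowed $1$-path precisely when $v_0\neq v_2$ and $(v_0,v_2)\notin E_G$. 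Accordingly I would partition the allowed $2$-paths into: (i) double-edge paths $e_{v_0,v_1,v_0}$; (ii) directed-triangle paths $e_{v_0,v_1,v_2}$ with $v_0\neq v_2$ and $(v_0,v_2)\in E_G$; and ``square'' paths $e_{v_0,v_1,v_2}$ with $v_0\neq v_2$ and $(v_0,v_2)\notin E_G$. This gives a direct sum decomposition $\mathcal A_2(G;R)=A_{\mathrm D}\oplus A_{\mathrm T}\oplus\bigoplus_{(v_0,v_2)}A_{(v_0,v_2)}$, the last sum running over ordered pairs with $v_0\neq v_2$ and $(v_0,v_2)\notin E_G$, where $A_{(v_0,v_2)}$ is free on $\{e_{v_0,v_1,v_2}\mid v_1\in S_{v_0,v_2}\}$.

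Next I would check that $\partial^P_2$ maps both $A_{\mathrm D}$ and $A_{\mathrm T}$ into $\mathcal A_1(G;R)$, so these summands lie inside $\Omega_2(G;R)$; and that on $A_{(v_0,v_2)}$ the non-allowed part of $\partial^P_2\bigl(\sum_{v_1}\alpha_{v_1}e_{v_0,v_1,v_2}\bigr)$ is exactly $-\bigl(\sum_{v_1}\alpha_{v_1}\bigr)e_{v_0,v_2}$, using that the end-deletion terms $e_{v_0,v_1},e_{v_1,v_2}$ are allowed and so never contribute to the non-allowed part. Since the non-allowed regular $1$-paths $e_{v_0,v_2}$ belonging to distinct endpoint pairs are distinct generators of $\mathcal R_1(V_G;R)$, the obstructions from different pairs cannot cancel, and therefore $x\in\mathcal A_2(G;R)$ lies in $\Omega_2(G;R)$ if and only if every component $x_{(v_0,v_2)}$ lies in $\ker\varepsilon_{(v_0,v_2)}$, where $\varepsilon_{(v_0,v_2)}\colon A_{(v_0,v_2)}\to R$ is the augmentation $e_{v_0,v_1,v_2}\mapsto 1$. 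Consequently
\[
\Omega_2(G;R)=A_{\mathrm D}\oplus A_{\mathrm T}\oplus\bigoplus_{(v_0,v_2)}\ker\varepsilon_{(v_0,v_2)}.
\]

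To conclude I would describe generators and bases of each summand. $A_{\mathrm D}$ is free on the double edges and $A_{\mathrm T}$ is free on the directed triangles. For a fixed pair $(v_0,v_2)$, after choosing some $w_0\in S_{v_0,v_2}$, the set $\{e_{v_0,w,v_2}-e_{v_0,w_0,v_2}\mid w\in S_{v_0,v_2}\setminus\{w_0\}\}$ is a free basis of $\ker\varepsilon_{(v_0,v_2)}$ — the expansion $\sum_{w}\alpha_w e_{v_0,w,v_2}=\sum_{w\neq w_0}\alpha_w(e_{v_0,w,v_2}-e_{v_0,w_0,v_2})$ when $\sum_w\alpha_w=0$ being unique — and every member of it is a directed square with endpoints $v_0,v_2$, so the directed squares with those endpoints span $\ker\varepsilon_{(v_0,v_2)}$. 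When $|S_{v_0,v_2}|\leq 1$ this summand vanishes; when $|S_{v_0,v_2}|=2$ it has rank $1$ with a single directed square up to sign; and when $|S_{v_0,v_2}|\geq 3$, i.e. $G$ has a multisquare between $v_0$ and $v_2$, it has rank $|S_{v_0,v_2}|-1$ and admits a basis of directed squares. Assembling the summands proves both assertions: all double edges, directed triangles and directed squares together generate $\Omega_2(G;R)$; and a basis is obtained by taking all double edges, all directed triangles, a chosen basis of directed squares inside each multisquare, and one directed square up to sign for each non-multisquare pair.

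The routine parts are the differential computation and the freeness of $\ker\varepsilon_{(v_0,v_2)}$ with the stated basis. The one point to handle carefully — and essentially the only place the argument could go wrong — is the compatibility of $\Omega_2(G;R)$ with the direct sum decomposition of $\mathcal A_2(G;R)$: one must verify that the non-allowed $1$-paths obstructing $\partial^P_2$ for distinct endpoint pairs are linearly independent in $\mathcal R_1(V_G;R)$, so that the membership constraint for $\Omega_2(G;R)$ decouples pair by pair into the single augmentation condition above.
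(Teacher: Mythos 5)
Your proof is correct, and it reaches the same conclusion by a genuinely more self-contained route than the paper does. The paper's proof is extremely terse: it invokes the bigrading decomposition $\Omega_2(G;R)=\bigoplus_{v_t,v_h}\Omega_2^{v_t,v_h}(G;R)$ from equation~\eqref{eq:Bigrading} — which itself was derived from the identification of $\Omega_n$ with $\ker\partial^M_{n,n}$ via the magnitude-homology machinery of Lemma~\ref{lem:PathChianKernel} — and then classifies the summands by the distance $d_G(v_t,v_h)\in\{0,1,2,>2\}$, leaving the reader to check each case. You instead rederive the decoupling by endpoint pairs from first principles: an explicit computation of $\partial^P_2$ on each allowed $2$-path, the observation that the only potentially non-allowed term is the middle deletion $e_{v_0,v_2}$, and linear independence in $\mathcal R_1(V_G;R)$ of the non-allowed regular $1$-paths indexed by distinct ordered pairs. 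Your classification of $2$-paths into double-edge, triangle, and square types corresponds to the paper's classification by $d_G(v_t,v_h)=0,1,2$. What your approach buys is independence from Section~\ref{sec:MagnitudeHomology} and the bigrading apparatus, so the proposition can be verified without trusting that stack of machinery; what the paper's approach buys is brevity, since the bigrading has already been set up for other purposes and will be reused. Your flagged delicate point — that the non-allowed middle terms from distinct pairs cannot cancel — is exactly the right thing to worry about, and your argument for it is sound: the end-deletion terms $e_{v_0,v_1}$, $e_{v_1,v_2}$ of an allowed $2$-path are always allowed, so they can never contribute a non-allowed $e_{v_0,v_2}$, and the non-allowed regular $1$-paths form part of a basis of $\mathcal R_1(V_G;R)$.
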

We note that the existence of multisquares implies that no canonical basis of $\Omega_2(G;R)$ exists in general.
Moreover, the following example demonstrates that this is also the case for $\Omega_n(G;R)$ when $n\geq 2$.
\begin{example}
    For $t\geq 3$ consider the digraph $G$ with $V_G= \{ v_0,v_1^1,v_1^2,v_1^3,v_2,v_3,\dots,v_{t-1},v_t \}$ and edges
    \begin{align*}
        E_G = & \{  (v_0,v^1_1), (v^1_1,v_2), (v^1_1,v_3), (v_0,v^2_1), (v^2_1,v_2), (v^2_1,v_3), (v_0,v^3_1), (v^3_1,v_2), (v^3_1,v_3) \}
        \\
        & \cup
        \{ (v_{i},v_{i+1}) \: | \: i=2,\dots,t-1 \}
        \cup
        \{ (v_{i},v_{i+2}) \: | \: i=2,\dots,t-2 \}.
    \end{align*}
    \begin{center}
        \tikz {
            \node (v0) at (0,0) {$v_0$};
            \node (v1) at (1,1) {$v^1_1$};
            \node (v12) at (1,-1) {$v^2_1$};
            \node (v13) at (1,2) {$v^3_1$};
            \node (v2) at (2,0) {$v_2$};
            \node (v3) at (3.414,0) {$v_3$};
            \node (v4) at (4.414,1) {$v_4$};
            \node (v5) at (5.414,0) {$v_5$};
            \node (v6) at (6.414,1) {$v_6$};
            \node (vt-2) at (9.414,0) {$v_{t-2}$};
            \node (vt-1) at (10.414,1) {$v_{t-1}$};
            \node (vt) at (11.414,0) {$v_t$};
            \node (d) at (8.414,0.9) {$\cdots$};
            \node (d) at (7.414,0.1) {$\cdots$};
            \node (B6) at (7.414,1) {};
            \node (B5) at (6.414,0) {};
            \node (B-6) at (7.014,0.4) {};
            \node (Bt-1) at (9.414,1) {};
            \node (Bt-2) at (8.414,0) {};
            \node (B-t-2) at (8.814,0.6) {};
            \draw[->] (v0) -- (v1);
            \draw[->] (v0) -- (v12);
            \draw[->] (v0) -- (v13);
            \draw[->] (v1) -- (v2);
            \draw[->] (v12) -- (v2);
            \draw[->] (v13) -- (v2);
            \draw[->] (v2) -- (v3);
            \draw[->] (v3) -- (v4);
            \draw[->] (v4) -- (v5);
            \draw[->] (v5) -- (v6);
            \draw[->] (vt-2) -- (vt-1);
            \draw[->] (vt-1) -- (vt);
            \draw[->,gray] (v1) -- (v3);
            \draw[->,gray] (v12) -- (v3);
            \draw[->,gray] (v13) -- (v3);
            \draw[->,gray] (v2) -- (v4);
            \draw[->,gray] (v3) -- (v5);
            \draw[->,gray] (v4) -- (v6);
            \draw[->,gray] (vt-2) -- (vt);
            \draw[-,gray] (v6) -- (B6);
            \draw[-] (v6) -- (B-6);
            \draw[-,gray] (v5) -- (B5);
            \draw[->,gray] (Bt-1) -- (vt-1);
            \draw[->] (B-t-2) -- (vt-2);
            \draw[->,gray] (Bt-2) -- (vt-2);
        }
    \end{center}
    The digraph $G$ contains a multisquare between $v_0$ and $v_2$.
    Each of the $t$-chains
    \[
        e_{v_0,v_1^j,v_2,v_3\dots,v_{t-1},v_t}
        -
        e_{v_0,v_1^{j'},v_2,v_3\dots,v_{t-1},v_t}
    \]
    for $j,j'=1,2,3$, and $j\neq j'$, generate $\Omega_t(G;R)$ and any two of them form a basis of $\Omega_t(G;R)$.
\end{example}

To end the section, we summarise the existing results under restrictions for bases of $\Omega_n(G;R)$ when $n\geq 3$. 
\begin{definition}\label{def:Trapezohedron}
    For $t\geq2$ let $\mathbb{T}_{t}$ be the following digraph on vertices
    \[
        V_{\mathbb{T}_{t}} =
        \{ T,u_1,\dots,u_t,v_1,\dots,v_t,H \}
    \]
    and edges
    \[
        T \to u_i,\;\;\;
        u_i \to v_i, \;\;\;
        u_i \to v_{i+1},
        \;\;\; \text{and} \;\;\;
        v_i \to H
    \]
    for $i=1,\dots,t$ where all index values are assumed to be modulo $t$.
    \begin{center}
        \tikz {
            \node (T) at (2.25,0) {$T$};
            \node (u1) at (-0.75,1) {$u_1$};
            \node (u2) at (0.75,1) {$u_2$};
            \node (v1) at (-0.75,2.75) {$v_1$};
            \node (v2) at (0.75,2.75) {$v_2$};
            \node (du) at (2.25,1) {$\cdots$};
            \node (dv) at (2.25,2.75) {$\cdots$};
            \node (ut-1) at (3.75,1) {$u_{t-1}$};
            \node (vt-1) at (3.75,2.75) {$v_{t-1}$};
            \node (ut) at (5.25,1) {$u_t$};
            \node (vt) at (5.25,2.75) {$v_t$};
            \node (H) at (2.25,3.75) {$H$};
            \node (u2-) at (1.5,1.825) {};
            \node (vt-1-) at (3,1.925) {};
            \draw[->] (T) -- (u1);
            \draw[->] (T) -- (u2);
            \draw[->] (T) -- (ut-1);
            \draw[->] (T) -- (ut);
            \draw[->] (u1) -- (v1);
            \draw[->] (u1) -- (v2);
            \draw[->] (u2) -- (v2);
            \draw[->] (ut-1) -- (vt-1);
            \draw[->] (ut-1) -- (vt);
            \draw[->] (ut) -- (vt);
            \draw[->] (ut) -- (v1);
            \draw[->] (v1) -- (H);
            \draw[->] (v2) -- (H);
            \draw[->] (vt-1) -- (H);
            \draw[->] (vt) -- (H);
            \draw[-] (u2) -- (u2-);
            \draw[->] (vt-1-) -- (vt-1);
        }
    \end{center}
    The digraph $\mathbb{T}_{t}$ is called a \emph{trapezohedron} of order $t$.
\end{definition}

\begin{proposition}[\cite{Grigoryan2022} Proposition 2.1]\label{prop:Trapezohedron}
    The free $R$-module $\Omega_3(\mathbb{T}_t;R)$ has rank $1$ and $H_n^P(\mathbb{T}_t;R)=0$ for $n\geq 1$.
\end{proposition}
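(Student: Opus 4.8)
The plan is a direct calculation, carried out one chain degree at a time and tracking coefficients against the bases of allowed paths. First I would list the allowed paths of $\mathbb{T}_t$: since $T$ is a source and $H$ a sink, the only allowed $2$-paths are $e_{T,u_i,v_i}$, $e_{T,u_i,v_{i+1}}$, $e_{u_i,v_i,H}$, $e_{u_i,v_{i+1},H}$ (so $\operatorname{rank}\mathcal{A}_2=4t$), the only allowed $3$-paths are $e_{T,u_i,v_i,H}$ and $e_{T,u_i,v_{i+1},H}$ (so $\operatorname{rank}\mathcal{A}_3=2t$), and $\mathcal{A}_n(\mathbb{T}_t;R)=0$ for $n\geq 4$; hence $\Omega_n(\mathbb{T}_t;R)=0=H^P_n(\mathbb{T}_t;R)$ for $n\geq 4$. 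For $\Omega_3$, I would write a general $x\in\mathcal{A}_3(\mathbb{T}_t;R)$ as $\sum_i\big(a_i\,e_{T,u_i,v_i,H}+b_i\,e_{T,u_i,v_{i+1},H}\big)$, compute $\partial^P_3 x$, and read off the coefficients of the non-allowed elementary $2$-paths $e_{T,u_i,H}$ and $e_{T,v_j,H}$ that appear. Forcing $\partial^P_3 x\in\mathcal{A}_2(\mathbb{T}_t;R)$ gives $a_i+b_i=0$ and $a_j+b_{j-1}=0$, so all $a_i$ agree; thus $\Omega_3(\mathbb{T}_t;R)$ is free of rank $1$, generated by $z=\sum_i\big(e_{T,u_i,v_i,H}-e_{T,u_i,v_{i+1},H}\big)$. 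A short computation (the $e_{T,u_i,H}$ and $e_{T,v_i,H}$ terms telescope away) gives $\partial^P_3 z=\sum_i\big(e_{u_i,v_i,H}-e_{u_i,v_{i+1},H}\big)-\sum_i\big(e_{T,u_i,v_i}-e_{T,u_i,v_{i+1}}\big)$, a nonzero element of the free module $\mathcal{A}_2(\mathbb{T}_t;R)$ with coefficients $\pm1$; hence $\partial^P_3$ is injective on $\Omega_3$, and together with $\Omega_4=0$ this gives $H^P_3(\mathbb{T}_t;R)=0$.

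For $H^P_2$ I would invoke Proposition~\ref{prop:Dim2Base}. A quick inspection shows $\mathbb{T}_t$ contains no double edge, no directed triangle, and no multisquare, so the $2t$ directed squares $Q_j=e_{T,u_{j-1},v_j}-e_{T,u_j,v_j}$ and $P_i=e_{u_i,v_i,H}-e_{u_i,v_{i+1},H}$ form a basis of $\Omega_2(\mathbb{T}_t;R)$. Computing $\partial^P_2 Q_j$ and $\partial^P_2 P_i$ and imposing $\partial^P_2\big(\sum_j\beta_j Q_j+\sum_i\gamma_i P_i\big)=0$, the coefficients of the edges $T\to u_k$ and $v_k\to H$ force the $\beta_j$ to be a common value $\beta$ and the $\gamma_i$ a common value $\gamma$, and the coefficients of the edges $u_k\to v_k$ force $\beta=\gamma$; hence $\ker(\partial^P_2|_{\Omega_2})=R\cdot\big(\sum_j Q_j+\sum_i P_i\big)$. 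Reindexing one of the sums shows $\partial^P_3 z=\sum_j Q_j+\sum_i P_i$, so $\operatorname{im}\partial^P_3=\ker(\partial^P_2|_{\Omega_2})$ and $H^P_2(\mathbb{T}_t;R)=0$.

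Finally, for $H^P_1$, note $\Omega_1(\mathbb{T}_t;R)=\mathcal{A}_1(\mathbb{T}_t;R)$ is free on the $4t$ edges, and I would describe $\ker\partial^P_1$ concretely by flow conservation: a $1$-chain $\sum_e c_e e$ lies in $\ker\partial^P_1$ iff at every vertex the sum of outgoing coefficients equals the sum of incoming ones; using conservation at $T$, at $H$, at each $u_i$ and at each $v_i$, and setting $p_i=c_{u_i\to v_i}$, $q_i=c_{u_i\to v_{i+1}}$, this identifies $\ker\partial^P_1$ with $\{(p,q)\in R^t\oplus R^t\mid \sum_i p_i+\sum_i q_i=0\}$, free of rank $2t-1$. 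Under this identification $\partial^P_2 P_i\leftrightarrow(e_i,-e_i)$ and $\partial^P_2 Q_j\leftrightarrow(-e_j,e_{j-1})$, and a direct manipulation of these $2t$ vectors shows they generate the whole module $\{(p,q)\mid\sum p_i+\sum q_i=0\}$; since $\operatorname{im}\partial^P_2$ is the span of these vectors and is contained in $\ker\partial^P_1$, equality follows and $H^P_1(\mathbb{T}_t;R)=0$.

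The argument is entirely elementary, so the real work is in the bookkeeping and, more importantly, in keeping everything valid over an arbitrary commutative ring $R$ rather than a field. For $H^P_1$ in particular, dimension counting is unavailable: one must exhibit an explicit $R$-module generating set of the cycle module $\ker\partial^P_1$ and check each generator is a $\partial^P_2$-boundary, i.e. verify that the ``square'' cycles $\partial^P_2 Q_j$ and $\partial^P_2 P_i$ span all of $\ker\partial^P_1$ and not merely a full-rank submodule; this is the only step I expect to need genuine care. The analogous point in degree $2$ --- that $\Omega_2(\mathbb{T}_t;R)$ is genuinely free with the stated basis over any $R$ --- is exactly what Proposition~\ref{prop:Dim2Base} supplies.
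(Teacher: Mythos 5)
The paper cites Proposition~\ref{prop:Trapezohedron} from \cite{Grigoryan2022} (Proposition 2.1) without reproducing a proof, so there is no in-paper argument to compare against. Your direct calculation is correct and self-contained. Briefly: the path enumeration ($4t$ allowed $2$-paths, $2t$ allowed $3$-paths, none of length $\ge 4$) is right; the coefficient constraints from $e_{T,u_i,H}$ and $e_{T,v_j,H}$ force $a_i+b_i=0$ and $a_j+b_{j-1}=0$, hence rank $1$ for $\Omega_3$; the verification that $\partial^P_3 z=\sum_j Q_j+\sum_i P_i$ under the relabeling $j=i+1$ is exact; and since $\mathbb{T}_t$ has no double edges, no directed triangles, and no multisquares, Proposition~\ref{prop:Dim2Base} indeed gives the $2t$ directed squares $Q_j,P_i$ as a free $R$-basis of $\Omega_2$, making the kernel computation in degree $2$ valid over any $R$. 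The degree-$1$ argument is also correct; the one place I would add a sentence is the claim that $\ker\partial^P_1\cong\{(p,q)\in R^{2t}\mid\sum p_i+\sum q_i=0\}$ is free of rank $2t-1$: this follows because the single defining equation is a split $R$-linear surjection $R^{2t}\to R$ (eliminate any one coordinate), so its kernel is a free direct summand; and you need this freeness before dimension/rank language means anything over a general $R$. Your check that the $2t$ vectors $(e_i,-e_i)$ and $(-e_j,e_{j-1})$ actually \emph{generate} this kernel (not merely span a full-rank submodule) is exactly the point of genuine care you flagged, and the telescoping argument producing $(e_k-e_l,0)$ and $(0,e_k-e_l)$ together with $(e_1,-e_1)$ settles it.
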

The unique up to sign generator of $\Omega_3(\mathbb{T}_t;R)$ is called a \emph{trapezohedron element} and can be explicitly realised as
\[
    \sum_{i=1}^t e_{T,u_i,v_i,H} - e_{T,u_i,v_{i+1},H}
\]
where $v_{t+1}=v_1$.

A \emph{map} of digraphs $f \colon H \to G$ is a function $f\colon V_H \to V_G$ such that for all $(u,v) \in E_H$ either $f(u) = f(v)$ or $(f(u),f(v)) \in E_G$.

\begin{theorem}[\cite{Grigoryan2022} Theorem 2.10]\label{thm:Dim3BasisNoDoubleNoMulti}
    Let $K$ be a field and $G$ a digraph containing no double edges and no multisquares.
    Then there is a basis of $\Omega_3(G;K)$ consisting of
    elements obtained as the induced image of a trapezohedron element under a digraph map from $\mathbb{T}_t \to G$ for some integer $t \geq 2$. 
\end{theorem}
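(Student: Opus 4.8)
The plan is to combine the explicit linear description of $\Omega_3(G;K)$ with the combinatorial rigidity forced by the absence of multisquares, and then to realise a spanning set of chains of the required form by explicit digraph maps. One direction is essentially free: the path chain complex is functorial on digraphs, so every digraph map $f\colon\mathbb T_t\to G$ induces $f_*\colon\Omega_*(\mathbb T_t;K)\to\Omega_*(G;K)$, and by Proposition~\ref{prop:Trapezohedron} the image $f_*(\tau_t)$ of the trapezohedron element $\tau_t$ lies in $\Omega_3(G;K)$. Since over a field any spanning set of trapezohedron images contains a basis consisting of trapezohedron images, it suffices to prove that such images span $\Omega_3(G;K)$.

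I would begin by writing a chain $x=\sum\alpha_{v_0v_1v_2v_3}\,e_{v_0,v_1,v_2,v_3}\in\mathcal A_3(G;K)$ and expanding $\partial^P_3x$. Because $G$ has no double edges, the only terms of $\partial^P_3(e_{v_0,v_1,v_2,v_3})$ that can fail to be allowed are $e_{v_0,v_1,v_3}$ (present iff $(v_1,v_3)\notin E_G$) and $e_{v_0,v_2,v_3}$ (present iff $(v_0,v_2)\notin E_G$). Collecting coefficients, $x\in\Omega_3(G;K)$ if and only if two families of linear relations hold: for each $(v_0,v_1,v_3)$ with $(v_0,v_1)\in E_G$ and $(v_1,v_3)\notin E_G$ one needs $\sum_{c\in S_{v_1,v_3}}\alpha_{v_0v_1cv_3}=0$, and symmetrically for each $(v_0,v_2,v_3)$ with $(v_2,v_3)\in E_G$ and $(v_0,v_2)\notin E_G$ one needs $\sum_{b\in S_{v_0,v_2}}\alpha_{v_0bv_2v_3}=0$, where $S_{a,c}$ is the set of ``between'' vertices occurring in the definition of a multisquare. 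The no-multisquares hypothesis says precisely that $|S_{a,c}|\le 2$, so each of these relations involves at most two allowed $3$-paths. Next I would form the \emph{flip graph} $\Gamma$ whose vertices are the allowed $3$-paths of $G$, joining $e_{v_0,v_1,v_2,v_3}$ to $e_{v_0,v_1',v_2,v_3}$ when $\{v_1,v_1'\}=S_{v_0,v_2}$ (a \emph{left flip}) and to $e_{v_0,v_1,v_2',v_3}$ when $\{v_2,v_2'\}=S_{v_1,v_3}$ (a \emph{right flip}).

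Each allowed $3$-path has at most one neighbour of each kind, so $\Gamma$ has maximum degree at most $2$; its components are therefore paths and cycles, and two consecutive edges of a cycle always have different kinds (two consecutive flips of the same kind return to the starting path), so every cycle of $\Gamma$ has even length $2k$ with $k\ge 2$. The relations above assert exactly that the two coefficients along any edge of $\Gamma$ are opposite, while a coefficient is forced to vanish whenever its $3$-path has a side with $|S|=1$. Hence if $\alpha_P\ne 0$, the component $C$ of $P$ must be finite (an infinite path component would force infinitely many nonzero coefficients) and free of $|S|=1$ sides, so $C$ is either an even cycle or a path each of whose ends carries a genuine chord ($(v_0,v_2)\in E_G$ or $(v_1,v_3)\in E_G$), and the restriction of $x$ to $C$ is $\alpha_P$ times the alternating chain along $C$.

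It then remains to exhibit the alternating chain along such a $C$ as $\pm f_*(\tau_t)$ for some digraph map $f\colon\mathbb T_t\to G$. All $3$-paths of $C$ share a first vertex $v_0$ and a last vertex $v_3$ (flips fix these), so one sets $f(T)=v_0$, $f(H)=v_3$. For a cycle $C$ of length $2k$ one reads off the images of the $u_i$ and $v_i$ from the vertices in positions $1$ and $2$ of the $3$-paths encountered going around $C$; the alternation of flip kinds is exactly what makes this assignment well defined, one checks that $f\colon\mathbb T_k\to G$ is a digraph map, and $f_*(\tau_k)$ is by construction the alternating chain along $C$. For a path component $C=(P_1,\dots,P_m)$ one takes $t$ with $2t\ge m$, aligns the first $m$ entries of the flip cycle of $\mathbb T_t$ (after a cyclic rotation matching flip kinds) with $P_1,\dots,P_m$, and collapses the remaining vertices of $\mathbb T_t$ onto $v_0$, onto $v_3$, and onto an endpoint chord vertex, so that every remaining term of $\tau_t$ becomes irregular and drops out, the endpoint chords being precisely what make the resulting ``turn-around'' edges admissible; the directed $3$-simplex, where $\Omega_3$ is spanned by $e_{0,1,2,3}=f_*(\tau_2)$ for a suitable collapsing map $\mathbb T_2\to G$, already illustrates this. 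Subtracting an appropriate scalar multiple of $f_*(\tau_t)$ from $x$ strictly shrinks its support, so induction on the size of the support finishes the proof. The main obstacle I anticipate is exactly this last construction: verifying that the prescribed maps $\mathbb T_t\to G$ are genuine digraph maps and carry $\tau_t$ to the intended chain with no accidental cancellation. For cycle components this is a bounded, mechanical verification; for long path components the bookkeeping — which auxiliary vertices of $\mathbb T_t$ go to $v_0$, to $v_3$, or to an endpoint chord vertex, and why every edge of $\mathbb T_t$ stays admissible — is the delicate part. A secondary point requiring care is the clean derivation of the two coefficient relations, and of the degree bound on $\Gamma$, from the no-double-edges and no-multisquares hypotheses.
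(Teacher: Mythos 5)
The paper cites this as \cite{Grigoryan2022}, Theorem~2.10, without giving a proof, so there is no internal argument to compare against; I am assessing your proposal on its own. The strategy is correct and, to my knowledge, coincides with the ``snake''-type analysis underlying the original source. Since $G$ has no double edges, every allowed $3$-path $e_{v_0,v_1,v_2,v_3}$ automatically satisfies $v_0\neq v_2$ and $v_1\neq v_3$, so the only potentially non-allowed faces of $\partial^P_3$ are exactly the two you identify, and the no-multisquares hypothesis forces each resulting relation to involve at most two coefficients. The flip graph then has maximum degree two, its components are paths and cycles, consecutive flips in a cycle must alternate type (so cycles have even length $\geq 4$), and the support of any $x\in\Omega_3(G;K)$ breaks into alternating chains along finitely many components having no side with $|S|=1$. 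Since distinct components have disjoint support, this is a direct sum decomposition of $\Omega_3(G;K)$ into lines, so the alternating chains are already a basis — you do not actually need to whittle a spanning set.

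The one step you leave open — realising the alternating chain of a \emph{path} component as $\pm f_*$ of a trapezohedron element — does work out uniformly, and it is worth recording the pattern. For a component of length $m\in\{2k-1,2k\}$, take $t=k+1$, send $T\mapsto v_0$ and $H\mapsto v_3$, thread the first $m$ summands of the trapezohedron element of $\mathbb{T}_t$ onto $P_1,\dots,P_m$ by reading off the position-$1$ and position-$2$ vertices, and collapse the remaining $u$'s and $v$'s of $\mathbb{T}_t$ onto $f(T)$, onto $f(H)$, or onto an adjacent chord endpoint, according to whether the chord types at the two ends of the component agree (even $m$) or disagree (odd $m$). The excess summands of the trapezohedron element become irregular and so vanish, and the excess edges of $\mathbb{T}_t$ are carried either to the chord edges of $G$ guaranteed at the endpoints or to degenerate pairs, so $f$ is a genuine digraph map. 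Your $m=1$ directed-simplex observation is the prototype; I checked $m=2,3$ and the general odd and even patterns directly. So the proposal is sound, modulo writing out the path-component bookkeeping you yourself flag as the delicate part.
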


Let $K$ be a field. Generalising the basis description above, under only the assumption that $G$ contains no multisquares, Fu and Ivanov~\cite[Theorem 4.7]{Fu2024} give an explicit description of a basis of $\Omega_n(G;K)$ in terms of certain connected components of labeled graphs called \emph{short move} graphs~\cite[Definition 3.1]{Fu2024}.
In particular, the short move graphs have vertices labeled with $n$-paths in $G$ and positive integer edge labels correspond to a single vertex difference between path at that position index. 
Moreover, the construction of Fu and Ivanov's basis is unique up to sign.

\section{Structure morphisms on the path chain complex}\label{sec:StructureMaps}

Throughout this section let $G$ be a digraph equipped with the quasi-metric $d_G$ given in equation~\eqref{eq:GraphMetric}, $n$ a non-negative integer, and $R$ a commutative ring with a unit, unless stated otherwise. In the section we define several morphisms on $\Omega_n(G;R)$ that are fundamental to the constructions presented in the rest of the paper.

\subsection{A further characterisation of \texorpdfstring{$\Omega_n(G;R)$}{path chains}}

We first provide the following lemma, which gives a characterisation of the elements of $\Omega_n(G;R)$, further refining that given in Lemma~\ref{lem:PathChianKernel}.
We are not aware of the lemma having been proved previously in the form presented.
However, a related statement is made in \cite[Lemma 4.1]{Grigoryan2013}.

\begin{lemma}\label{lem:NoPositionSwap}
    Let $x \in \mathcal{A}_n(G;R)$.
    Then $x\in \Omega_n(G;R)$ if and only if
    \[
        \partial^M_{n,n,i}(x) = 0 
    \]
    for each $i=1,\dots,n-1$.
\end{lemma}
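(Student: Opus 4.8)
The plan is to build directly on Lemma~\ref{lem:PathChianKernel}, which already identifies $\Omega_n(G;R)$ (via $\phi_n$) with $\ker(\partial^M_{n,n})$, where $\partial^M_{n,n}=\sum_{i=1}^{n-1}(-1)^i\partial^M_{n,n,i}$. The ``if'' direction is then immediate, since a sum of zero maps is zero; the whole content is the ``only if'' direction, where one must upgrade the vanishing of this alternating sum to the vanishing of each individual term $\partial^M_{n,n,i}(x)$. The mechanism I would use is that, for distinct $i$, the maps $\partial^M_{n,n,i}$ take values in \emph{complementary} direct summands of the target module $C^M_{n-1,n}(G;R)$, so that no cancellation between them is possible.

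To set this up I would first describe the basis of $C^M_{n-1,n}(G;R)$. A basis chain $\langle w_0,\dots,w_{n-1}\rangle$ has $\ell\langle w_0,\dots,w_{n-1}\rangle=n$ distributed over its $n-1$ steps; each step has length at least $1$ because consecutive entries are distinct, and each is finite since an infinite step would force the total length to be $\infty$. As the only way to write $n$ as an ordered sum of $n-1$ integers each at least $1$ is to take one summand equal to $2$ and the remaining $n-2$ equal to $1$, every such chain has a unique \emph{long step}, occurring at some position $k\in\{1,\dots,n-1\}$. Letting $B_k$ be the set of basis chains whose long step is at position $k$, this gives a direct-sum decomposition $C^M_{n-1,n}(G;R)=\bigoplus_{k=1}^{n-1}R[B_k]$.

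Next I would verify that $\partial^M_{n,n,i}$ respects this decomposition, in the sense that its image lies in $R[B_i]$. For a diagonal basis chain $\omega=\langle v_0,\dots,v_n\rangle$ --- equivalently an allowed $n$-path, all of whose steps have length $1$ --- deleting the entry $v_i$ merges steps $i$ and $i+1$ into a single step from $v_{i-1}$ to $v_{i+1}$. When the defining condition $d(v_{i-1},v_i)+d(v_i,v_{i+1})=d(v_{i-1},v_{i+1})$ holds this merged step has length $2$ (and when it fails, or when $v_{i-1}=v_{i+1}$ so that the length would be $0$, the differential simply returns $0$); the other steps are untouched and retain length $1$. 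Hence $\partial^M_{n,n,i}(\omega)$ is either $0$ or a basis element whose long step sits at position $i$, so $\partial^M_{n,n,i}(x)\in R[B_i]$ for every $x\in\mathcal{A}_n(G;R)$. The relation $\sum_{i=1}^{n-1}(-1)^i\partial^M_{n,n,i}(x)=0$ then expresses the zero element as a sum with one term in each summand $R[B_i]$, forcing $(-1)^i\partial^M_{n,n,i}(x)=0$, and since $-1$ is a unit in $R$ this gives $\partial^M_{n,n,i}(x)=0$ for every $i$.

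I expect the only delicate point --- and the one to write out carefully rather than gloss over --- is the index bookkeeping when a vertex is deleted: one must confirm that, after re-indexing, the merged step really occupies position $i$ of the shortened chain and that every other step is genuinely unchanged (in particular that the adjacent steps $i-1$ and $i+1$ remain of length $1$). The degenerate case $v_{i-1}=v_{i+1}$ is harmless, since it is absorbed into the ``returns $0$'' clause of the differential. The cases $n\le 2$ are trivial, as the index set $\{1,\dots,n-1\}$ is then empty or a singleton.
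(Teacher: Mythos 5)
Your proof is correct, and it takes a genuinely cleaner route than the paper's. Both arguments rest on the same underlying fact --- that for distinct $i$ the maps $\partial^M_{n,n,i}$ cannot interfere with one another, so vanishing of the alternating sum $\partial^M_{n,n}=\sum_i(-1)^i\partial^M_{n,n,i}$ forces each term to vanish separately --- but they establish this in different ways. You make the disjointness structural and visible up front: a basis tuple of $C^M_{n-1,n}(G;R)$ has $n-1$ steps of integer length $\geq1$ summing to $n$, hence a unique length-$2$ step, whose position $k$ defines a direct-sum decomposition $C^M_{n-1,n}(G;R)=\bigoplus_k R[B_k]$; deleting $v_i$ from a diagonal chain produces (if anything) a tuple whose length-$2$ step is at position $i$, so $\partial^M_{n,n,i}$ lands in $R[B_i]$, and disjointness of summands finishes the argument. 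The paper instead takes $x\in\ker\partial^M_{n,n}$, fixes a term of $\partial^M_{n,n,i}(x)$, collects a minimal set $K$ of terms cancelling it, and then rules out any contribution from $\partial^M_{n,n,s}$ with $s\neq i$ by a triangle-inequality contradiction in the digraph quasi-metric (the equality of shortened tuples forces $d_G(v^j_{i-1},v^j_{i+1})=1$, contradicting the nondegeneracy assumption), followed by an iterative exhaustion of the index set $J$. The paper's cancellation argument is precisely an unwound verification that ``position of the long step'' is invariant under such cancellations; your version isolates that invariant once and for all, eliminating both the minimality bookkeeping and the inductive ``successive applications'' step. What the paper's formulation buys is perhaps a closer alignment with the explicit path-level computations used elsewhere in their proofs; what yours buys is brevity, transparency, and an explicit description of $C^M_{n-1,n}(G;R)$ that could be reused.

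Two small points worth writing out if you formalise this: (i) the integrality and lower bound on step lengths rely on the \emph{digraph} quasi-metric specifically --- $d_G(u,v)\geq1$ when $u\neq v$ and $d_G$ is integer- or $\infty$-valued --- not on a general quasi-metric, so it is worth flagging that the decomposition is special to this setting; and (ii) after deleting $v_i$ you should note explicitly, as you anticipate, that the surviving steps at positions $1,\dots,i-1$ and $i+1,\dots,n-1$ of the shortened tuple are literally the original steps at positions $1,\dots,i-1$ and $i+2,\dots,n$, hence of length $1$, so the long step really is \emph{unique} and really does sit at position $i$.
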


\begin{proof}
    Sufficiency is clear from Lemma~\ref{lem:PathChianKernel}, as by definition $\partial_{n,n}^M = \sum_{i=1}^{n-1}(-1)^{i}\partial_{n,n,i}^M$.
    To show necessity we adopt the notation for magnitude homology from Section~\ref{sec:MagnitudeHomology}, identifying $\mathcal{A}_n(G;R)$ with $C^M_{n,n}(G;R)$ and $\Omega_n(G;R)$ with $H^M_{n,n}(G;R)$ using the map $\phi_n$ from Lemma~\ref{lem:PathChianKernel}. Thus if $x\in \Omega_n(G;R)$, then, similarly to equation~\eqref{eq:UniqueChainForm}, $x$ can be written uniquely as
    \begin{equation*}
        x = \sum_{j \in J} \alpha_j \langle v_0^j,\dots,v_n^j \rangle
    \end{equation*}
    where $J$ is a finite set, $e_{v_0^j,\dots,v_n^j}$ is an allowed $n$-path, and each $\alpha_j \in R$ is nonzero for all $j \in J$. 
    
    We claim that it is enough to show that for any $i=1,\dots,n-1$ and $j \in J$ satisfying
    \begin{equation}\label{eq:MagnitudeCondition}
        (v_{i-1}^j,v_{i+1}^j)\notin E_G
        \;\;\; \text{and} \;\;\;
        v_{i-1}^j \neq v_{i+1}^j
    \end{equation}
    there is a $K^j_i \subseteq J \setminus \{j\}$ with $(v^k_{i-1},v^k_{i+1}) \notin E_G$ and $v_{i-1}^k \neq v_{i+1}^k$ for each $k\in K^j_i$
    such that
    \[
        \sum_{k\in K^j_i \cup \{ j\}} \alpha_k
        \langle x_0^k,\dots,\hat{x}_{i}^k,\dots,x_n^k \rangle
        = 0.
    \]
    Indeed, if this holds, then it can be applied a second time to the set $J\setminus (K^i_j \cup \{j\})$. Successive applications will eventually yield the empty set.
    
    To show the claim, note that $x \in \ker \partial^M_{n,n}$ by Lemma~\ref{lem:PathChianKernel}, so for any $i=1,\dots,n-1$ and $j \in J$, there is a minimally sized $K \subseteq J \setminus \{j\}$
    and an $S_k\subseteq\{1,\dots,n-1\}$ for each $k\in K$
    such that
    \begin{equation}\label{eq:GeneralBoundarySum}
        (-1)^i\alpha_j
        \langle v_0^j,\dots,\hat{v}_i^j,\dots,v_n^j \rangle
        + \sum_{k\in K} 
        \sum_{s_k\in S_{k}}
        (-1)^{s_k} \alpha_k
        \langle v_0^k,\dots,\hat{v}_{s_k}^k,\dots,v_n^k \rangle
        = 0.
    \end{equation}
    Suppose that for some $k \in K$ and $s_{k}\in S_{k}$ it is the case that $s_{k} \neq i$.
    In particular, by the minimal size of $K$ we can assume
    \[
        \langle v_0^j,\dots,\hat{v}_i^j,\dots,v_n^j \rangle
        =
        \langle v_0^k,\dots,\hat{v}_{s_k}^k,\dots,v_n^k \rangle.
    \]
    Without loss of generality assume $i<s_{k}$. Then
    \begin{align}\label{ChainEqualityConditions}
         v_0^j,\dots,v_{i-1}^j = & \: v_0^{k},\dots,v_{i-1}^{k}
         \nonumber
        \\ \text{while} \: 
         v_{i+1}^j,\dots,x_{s_{k}}^j = & \: x_i^{k},\dots,x_{s_{k}-1}^{k}
        \\ \text{and} \:
         v_{s_{k}+1}^j,\dots,v_{n}^j = & \: v_{s_{k}+1}^{k},\dots,v_{n}^{k}.
         \nonumber
    \end{align}
    As $\ell(\langle v_0^j,\dots,v_n^j \rangle) = n$ and
    the distance on the vertices of $G$ comes from the digraph quasi-metric, we must have that $d_G(v_t^j,v_{t+1}^j) = 1$ and $d_G(v_t^{k},v_{t+1}^{k}) = 1$ for $t=0,\dots,n-1$.
    However, the construction of the digraph quasi-metric also implies that
    \begin{equation}\label{eq:xMetricCondition}
        d_G(v_{i-1}^j,v_{i+1}^j) = d_G(v_{i-1}^{k},v_i^{k}) = 1.
    \end{equation}
    by using the last position of the first line of equation~\eqref{ChainEqualityConditions} and the first position of the second line.
    Meanwhile, by the assumption in equation~\eqref{eq:MagnitudeCondition}, we also have that
    \[
        d_G(v_{i-1}^j,v_{i+1}^j) = d_G(v_{i-1}^j,v_i^j)+d_G(v_i^j,v_{i+1}^j) = 2
    \]
    which contradicts equation~\eqref{eq:xMetricCondition}.
    Therefore, $s_k = i$ for all $s_k \in S_K$ and all $k\in K$. Together with equation \eqref{eq:GeneralBoundarySum} the proof is complete by setting $K_i^j = K$.
     \qed
\end{proof}

\subsection{Bigrading of path chains and face maps at a vertex}

The material presented in this section, which is related to connectedness and bigradings of $\Omega_n(G;R)$, constitutes a generalisation of parts of~\cite[Section 2.2]{Grigoryan2022} to the case of non-field coefficients.
In the reminder of the section, given $x \in \Omega_n(G;R)$ or $x \in \mathcal{A}_n(G;R)$, we assume that $x$ has the form
\begin{equation}\label{genformx}
    x = \sum_{e_{v_0,\dots,v_n}\in P^G_v} \alpha_{v_0,\dots,v_n} e_{v_0,\dots,v_n}
\end{equation}
where each $\alpha_{v_0,\dots,v_n} \in R$ and only finitely many $\alpha_{v_0,\dots,v_n} \neq 0$. 

\begin{definition}
    Let $x \in \Omega_n(G;R)$. Define the \emph{head set} $\bar{h}(x)$ and the \emph{tail set} $\bar{t}(x)$ to be
    \begin{align*}
        \bar{h}(x) &=
        \{ v_n \: | \: e_{v_0,\dots,v_n} \in P^G_v \; \text{and} \; \alpha_{v_0,\dots,v_n} \neq 0 \}
        \\ \;\;\; \text{and} \;\;\;
        \bar{t}(x) &=
        \{ v_0 \: | \: e_{v_0,\dots,v_n} \in P^G_v \; \text{and} \; \alpha_{v_0,\dots,v_n} \neq 0 \}
        .
    \end{align*}
    If $|\bar{h}(x)| = 1$, then $x$
    is called \emph{upper connected} and the unique element $h(x) \in \bar{h}(x)$ is called the \emph{head} of $x$.
    Similarly, if $|\bar{t}(x)| = 1$, then $x$
    is called \emph{lower connected} and the unique element $t(x) \in \bar{t}(x)$ is called the \emph{tail} of $x$.
    The element $x$ is called \emph{connected} if it is both upper connected and lower connected.
\end{definition}

Recall from Lemma~\ref{lem:PathChianKernel} that there are isomorphisms $\phi_n\colon \mathcal{A}(G;R)\to C_{n,n}^M(G;R)$ and $\phi_n\colon \Omega(G;R)\to H_{n,n}^M(G;R)$. From equation~\eqref{eq:DiagonalKernal} it follows that 
\[    
\ker(\partial^M_{n,n})= H^M_{n,n}(G;R) \cong \Omega(G;R)
\]
where $\partial^M_{n,n}$ is the magnitude differential.
Since $\partial^M_{n,n}$ preserves the endpoints of the $(n+1)$-tuples in the magnitude chain complex, $\ker(\partial^M_{n,n})$ admits a basis consisting of elements which are sums of tuples having the same endpoints. These elements correspond under the isomorphism above to elements of $\Omega(G;R)$ which are sums of paths having the same endpoints. Therefore, $\Omega_n(G;R)$ admits a basis of connected elements (compare \cite[Lemma 2.2]{Grigoryan2022}, where a similar statement was made for field coefficients).

It now follows that for any vertices $v_h,v_t\in V_G$, the connected elements with head $v_h$ and tail $v_t$ span a free submodule $\Omega_n^{v_t,v_h}(G;R)$  of $\Omega_n(G;R)$. Since the connected elements with different endpoints are linearly independent, there is a decomposition
\begin{equation}\label{eq:Bigrading}
    \Omega_n(G;R) = \bigoplus_{v_t,v_h \in V_G} \Omega_n^{v_t,v_h}(G;R).
\end{equation}
which we use to define a bigrading on $\Omega_n(G;R)$.

We say that a basis of $\Omega_n(G;R)$ \emph{respects the bigrading} $\Omega_*^{*,*}(G;R)$ if it restricts to a basis on each $\Omega_n^{v_t,v_h}(G;R)$. From the discussion above, it follows that $\Omega_n(G;R)$ always admits a basis that respects the bigrading.

\begin{definition}
    For $v\in V_G$ define homomorphisms
    \[
        \delta^h_{n,v},\: \delta^t_{n,v} \colon 
        \mathcal{A}_n(G;R) \to \mathcal{A}_{n-1}(G;R)
    \]
    by
    \begin{equation*}
        \delta_{n,v}^h(x)  =
        \sum_{\substack{e_{v_0,\dots,v_n}\in P^G_n \\ v_{n-1}=v}}
        \alpha_{v_0,\dots,v_n} e_{v_0,\dots,v_{n-1}} 
    \end{equation*}
    and
    \begin{equation*}
        \delta_{n,v}^t(x)  =
        \sum_{\substack{e_{v_0,\dots,v_n}\in P^G_n \\ v_1=v}}
        \alpha_{v_0,\dots,v_n} e_{v_1,\dots,v_n}
    \end{equation*}
    where we assume that $x \in \mathcal{A}_n(G;R)$ has the form \eqref{genformx}.
\end{definition}

\begin{lemma}\label{lem:SumFace}
    For each $v\in V_G$ the functions $\delta_{n,v}^h,\:\delta_{n,v}^t\colon \mathcal{A}_n(G;R) \to \mathcal{A}_{n-1}(G;R)$ restrict to well defined $R$-module homomorphisms
    \begin{equation}\label{eq:OmegaBimodule}
        \delta_{n,v}^h,\:\delta_{n,v}^t \colon
        \Omega_n(G;R)\to \Omega_{n-1}(G;R).
    \end{equation}
\end{lemma}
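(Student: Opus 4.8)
The plan is to verify the statement using the magnitude-homology description of $\Omega_n(G;R)$ as $\ker(\partial^M_{n,n})$, and in particular the refined characterisation from Lemma~\ref{lem:NoPositionSwap}: an element $x \in \mathcal{A}_n(G;R)$ lies in $\Omega_n(G;R)$ if and only if $\partial^M_{n,n,i}(x) = 0$ for every $i = 1,\dots,n-1$. So the task reduces to two things: (a) check that $\delta^h_{n,v}$ and $\delta^t_{n,v}$ actually land in $\Omega_{n-1}(G;R)$ when applied to an element of $\Omega_n(G;R)$, i.e.\ that the outputs are allowed paths satisfying the $(n-2)$ vanishing conditions; and (b) $R$-linearity, which is immediate since the formulas defining $\delta^h_{n,v}$ and $\delta^t_{n,v}$ are manifestly additive and commute with scalar multiplication on the coefficients $\alpha_{v_0,\dots,v_n}$.

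First I would treat $\delta^h_{n,v}$; the argument for $\delta^t_{n,v}$ is symmetric (reversing the order of indices, or dualising via the digraph with all edges reversed). Fix $x \in \Omega_n(G;R)$ written in the form \eqref{genformx}. Each summand of $\delta^h_{n,v}(x)$ is $\alpha_{v_0,\dots,v_n}\,e_{v_0,\dots,v_{n-1}}$ with $v_{n-1}=v$; since $e_{v_0,\dots,v_n}$ is an allowed path, so is its initial segment $e_{v_0,\dots,v_{n-1}}$, hence $\delta^h_{n,v}(x) \in \mathcal{A}_{n-1}(G;R)$. It remains to show $\partial^M_{n-1,n-1,i}(\delta^h_{n,v}(x)) = 0$ for $i = 1,\dots,n-2$. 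The key observation is that, passing through $\phi$, $\delta^h_{n,v}$ corresponds to first projecting onto the summand of tuples $\langle v_0,\dots,v_n\rangle$ whose second-to-last vertex is $v$, then deleting the last vertex; this deletion is exactly the (non-magnitude) face operator at position $n$, which commutes with each $\partial^M_{n,n,i}$ for $i \le n-2$ because the positions acted upon are disjoint and the metric conditions triggering the magnitude face maps involve only the retained vertices $v_{i-1},v_i,v_{i+1}$ (all with index $\le n-1$). Concretely, I would show that for each $i \le n-2$,
\[
    \partial^M_{n-1,n-1,i}\bigl(\delta^h_{n,v}(x)\bigr)
    = \delta^h_{n-1,v}\bigl(\partial^M_{n,n,i}(x)\bigr)
    = \delta^h_{n-1,v}(0) = 0,
\]
where the first equality is the commutation just described (one must check the metric condition "$d(v_{i-1},v_i)+d(v_i,v_{i+1}) = d(v_{i-1},v_{i+1})$" is unaffected by deleting $v_n$, which holds since $v_{i+1}$ still has index $\le n-1 < n$) and the middle term vanishes by Lemma~\ref{lem:NoPositionSwap} applied to $x \in \Omega_n(G;R)$. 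By Lemma~\ref{lem:NoPositionSwap} again, now in dimension $n-1$, this gives $\delta^h_{n,v}(x) \in \Omega_{n-1}(G;R)$.

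For $\delta^t_{n,v}$ the analogous identity is $\partial^M_{n-1,n-1,i-1}(\delta^t_{n,v}(x)) = \delta^t_{n,v}(\partial^M_{n,n,i}(x))$ for $i = 2,\dots,n-1$ (the index shifts by one because deleting $v_0$ re-indexes the remaining vertices), together with the observation that $\partial^M_{n,n,1}(x)$, which deletes $v_1$, does not interfere: after projecting to tuples with $v_1 = v$ and deleting $v_1$, positions $1,\dots,n-2$ of the result come from positions $2,\dots,n-1$ of the original. The main obstacle, and the only place requiring genuine care, is the bookkeeping of these index shifts and the verification that the metric side conditions defining $\partial^M$ are preserved under the truncation — in particular confirming that no new magnitude face map "appears" or "disappears" as a consequence of dropping an endpoint, which is where the hypothesis that we only need $i \le n-2$ (resp.\ $i \ge 2$) is used. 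Once the commutation identities are established, the conclusion is formal. I would also note that an alternative, perhaps cleaner, route avoids Lemma~\ref{lem:NoPositionSwap}: directly expand $\partial^P_{n-1}(\delta^h_{n,v}(x))$ and compare with (the relevant truncation of) $\partial^P_n(x)$, using that $\partial^P_n(x) \in \mathcal{A}_{n-1}(G;R)$; but the magnitude-homology bookkeeping above is the most transparent, so that is the approach I would carry out.
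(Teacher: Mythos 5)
Your proof is correct and follows essentially the same route as the paper: both invoke Lemma~\ref{lem:NoPositionSwap} to recast membership in $\Omega_{n-1}(G;R)$ as vanishing of the individual magnitude face maps $\partial^M_{n-1,n-1,i}$, and both observe that these face maps (for $i\le n-2$, resp.\ shifted by one in the tail case) commute with the truncation $\delta^h_{n,v}$ (resp.\ $\delta^t_{n,v}$) because they act on disjoint positions. The paper states this commutation tersely as following ``by the definition of $\delta^h_{n,v}$'' and only treats the upper case, whereas you spell out the commuting identity and the index shift for $\delta^t_{n,v}$ explicitly, but the underlying argument is the same.
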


\begin{proof}
    We prove only the part of the statement regarding $\delta_{n,v}^h$.
    Thus suppose that $x\in \Omega_n(G;R) \subseteq \mathcal{A}_n(G;R)$.
    Applying Lemma~\ref{lem:NoPositionSwap} gives $\partial^M_{n,n,i}(x)=0$ for each $i=1,\dots,n-1$, which, by the definition of $\delta_{n,v}^h$, implies that $\partial^M_{n,n,i}(\delta_{n,v}^h(x))=0$ for each $i=1,\dots,n-2$.
    This shows that $\delta_{n,v}^h(x) \in \Omega_{n-1}(G;R)$, again by applying Lemma~\ref{lem:NoPositionSwap}.
    \qed
\end{proof}

Observe that the homomorphisms $\delta_{n,v}^h$ and $\delta_{n,v}^t$ respect the $\Omega_*^{*,*}(G;R)$ bigrading.
That is, $\delta_{n,v}^h,\delta_{n,v}^t$ restrict to $R$-module homomorphisms
\begin{equation*}
    \delta_{n,v}^h \colon
    \Omega_n^{u,w}(G;R)\to \Omega_{n-1}^{u,v}(G;R)
    \;\;\; \text{and} \;\;\;
    \delta_{n,v}^t \colon
    \Omega_n^{u,w}(G;R)\to \Omega_{n-1}^{v,w}(G;R)
\end{equation*}
for any $u,v,w\in V_G$. Moreover, for any $x\in \Omega_n(G;R)$ and $v \in V_G$, the element $\delta_{n,v}^h(x)$ is upper connected if nonzero, and $\delta_{n,v}^t(x)$ is lower connected if nonzero. In addition, we obtain homomorphisms
\[
    \delta^h_n(x),\: \delta^t_n(x)\; \colon 
    \Omega_n(G;R) \to \Omega_{n-1}(G;R)
\]
given by 
\[
    \delta^h_n(x) = \sum_{v\in V_G} \delta_{n,v}^h(x)
    \;\;\; \text{and} \;\;\;
    \delta^t_n(x) = \sum_{v\in V_G} \delta_{n,v}^t(x)
\]
that coincide with the maps $(-1)^n\partial_{n,n}^P$ and $\partial_{n,0}^P$, respectively.
Finally,
we verify that $\delta^h_n$ and $\delta^t_n$ interact as expected with respect to change of coefficients from $\mathbb{Z}$ to a field of characteristic $0$ and prove Proposition~\ref{prop:Dim2Base} from the previous section.

\begin{lemma}\label{lem:CoeffChange}
    Let $K$ be a field of characteristic $0$. Then the map
    \[
        \mu_* \colon \Omega_*(G;\mathbb{Z}) \otimes K \to \Omega_*(G;K)
    \]
    given by $\mu_n(x \otimes k)  = kx$ is an isomorphism of graded modules such that the following diagrams commute.
    \[
        \xymatrix{
            \Omega_n(G;\mathbb{Z}) \otimes K
            \ar[r]^-{\mu_n}
            \ar[d]_-{\delta^h_n}
            &
            \Omega_n(G;K)
            \ar[d]^-{\delta^h_n}
            \\
            \Omega_{n-1}(G;\mathbb{Z}) \otimes K
            \ar[r]^-{\mu_{n-1}}
            &
            \Omega_{n-1}(G;K)
        }
        \;\;\;\;\;\;\;\;\;\;\;\;\;\;
        \xymatrix{
            \Omega_n(G;\mathbb{Z}) \otimes K
            \ar[r]^-{\mu_n}
            \ar[d]_-{\delta^t_n}
            &
            \Omega_n(G;K)
            \ar[d]^-{\delta^t_n}
            \\
            \Omega_{n-1}(G;\mathbb{Z}) \otimes K
            \ar[r]^-{\mu_{n-1}}
            &
            \Omega_{n-1}(G;K)
        }
    \]
\end{lemma}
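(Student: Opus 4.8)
The plan is to first establish the isomorphism $\mu_*$ using freeness, and then verify commutativity of the two diagrams by a direct computation on generators. For the first part, recall that $\Omega_n(G;\mathbb{Z})$ is a free $\mathbb{Z}$-module (this is noted in the excerpt, since $\mathcal{A}_*(G;R)$ is free and $\Omega_n$ is its submodule cut out by a linear condition; over $\mathbb{Z}$ a submodule of a free module is free). Pick a $\mathbb{Z}$-basis $\{b_j\}_{j\in J}$ of $\Omega_n(G;\mathbb{Z})$. I would show this basis becomes a $K$-basis of $\Omega_n(G;K)$, so that $\mu_n$ carries the $K$-basis $\{b_j\otimes 1\}$ of $\Omega_n(G;\mathbb{Z})\otimes K$ bijectively onto a $K$-basis of $\Omega_n(G;K)$ and is therefore an isomorphism. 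The key point here is that the condition $\partial^M_{n,n,i}(x)=0$ for $i=1,\dots,n-1$ from Lemma~\ref{lem:NoPositionSwap} is defined by a matrix with integer (indeed $0/1$) entries in the path basis of $\mathcal{A}_n$, so $\Omega_n(G;K) = \ker$ of that matrix over $K$ equals $\big(\ker$ over $\mathbb{Z}\big)\otimes K$ because tensoring with the flat (in fact, field) $\mathbb{Z}$-module $K$ is exact and commutes with taking kernels of maps between free modules. This simultaneously gives that the $b_j$ span and are linearly independent over $K$. The compatibility with the grading, $\mu$ being a map of \emph{graded} modules, is immediate since $\mu_n$ is defined degreewise.

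For the second part, commutativity of both squares follows by checking on elements of the form $x\otimes k$ with $x\in\Omega_n(G;\mathbb{Z})$ and $k\in K$. Writing $x = \sum \alpha_{v_0,\dots,v_n} e_{v_0,\dots,v_n}$ with $\alpha_{v_0,\dots,v_n}\in\mathbb{Z}$, one has $\mu_n(x\otimes k) = kx = \sum (k\alpha_{v_0,\dots,v_n}) e_{v_0,\dots,v_n}\in\Omega_n(G;K)$, and then from the explicit formula for $\delta^h_{n,v}$ (which only multiplies coefficients by $1$ and drops the last vertex, subject to a position constraint) both composites send $x\otimes k$ to $\sum_{v\in V_G}\sum_{v_{n-1}=v}(k\alpha_{v_0,\dots,v_n})\, e_{v_0,\dots,v_{n-1}}$; the two routes agree because $\delta^h_n$ is $R$-linear for any $R$ and its matrix entries (all $0$ or $1$) do not depend on the coefficient ring. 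The same verbatim argument, using the formula for $\delta^t_{n,v}$, handles the right-hand square. It is worth remarking that $\delta^h_n$ and $\delta^t_n$ agree up to sign with $\partial^P_{n,n}$ and $\partial^P_{n,0}$, whose defining formulas are manifestly coefficient-independent, which is another way to see this.

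The main obstacle is the first part: one must be slightly careful that $\mu_n$ is genuinely an isomorphism and not merely surjective or injective. Over a general commutative ring $R\to R'$ the natural map $\ker(f)\otimes_R R' \to \ker(f\otimes_R R')$ need not be an isomorphism, but here everything works because $\mathbb{Z}\to K$ is flat, the ambient modules $\mathcal{A}_n(G;R)$ and $\mathcal{A}_{n-1}(G;R)$ are free with the \emph{same} index sets over both rings, and the maps $\partial^M_{n,n,i}$ are given by the same integer matrices over both rings. Concretely: flatness of $K$ over $\mathbb{Z}$ gives exactness of $0\to\ker(f)\otimes K\to \mathcal{A}_n\otimes K \to \mathcal{A}_{n-1}\otimes K$, and identifying $\mathcal{A}_n\otimes K\cong\mathcal{A}_n(G;K)$ (freeness) identifies this kernel with $\Omega_n(G;K)$; this is the one place the argument uses more than bare linear algebra, so I would state it carefully rather than leave it implicit. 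Everything else is a routine unwinding of definitions.
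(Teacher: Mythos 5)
Your proposal is correct and follows essentially the same route as the paper: establish that $\mu_n$ is an isomorphism, then verify commutativity of the two squares by unwinding the definitions of $\delta^h_n$, $\delta^t_n$, and $\mu_*$ on elements. The paper's own proof of the isomorphism is extremely terse, asserting only that it holds ``as $\Omega_n(G;\mathbb{Z})$ is a free $\mathbb{Z}$-module''; you are right that freeness of $\Omega_n(G;\mathbb{Z})$ alone does not obviously imply $\mu_n$ is surjective onto $\Omega_n(G;K)$, and your careful spelling out of the real mechanism --- that $\Omega_n$ is the kernel of integer matrices $\partial^M_{n,n,i}$ inside the free module $\mathcal{A}_n$, that $K$ is flat over $\mathbb{Z}$, and that tensoring with a flat module commutes with taking kernels --- supplies exactly the justification the paper leaves implicit. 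Your remark identifying $\delta^h_n$, $\delta^t_n$ with $(-1)^n\partial^P_{n,n}$, $\partial^P_{n,0}$ as an alternate route to the commutativity matches a remark made just before the lemma in the paper as well.
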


\begin{proof}
    The map $\mu_n$ is an isomorphism of modules, as $\Omega_n(G;\mathbb{Z})$ is a free $\mathbb{Z}$-module.
    The fact that the diagrams commute follows directly from the definitions of $\mu_*$, $\delta_*^h$ and $\delta_*^t$.
    \qed
\end{proof}

Suppose $K$ is a field of characteristic $0$.
Using the isomorphism $\mu_n$ from the lemma above the composite $\Omega_n(G;\mathbb{Z})\hookrightarrow \Omega_n(G;\mathbb{Z}) \otimes K \xrightarrow{\mu_n} \Omega_n(G;K)$ can be interpreted as realising $\Omega_{n}(G;\mathbb{Z})$ as a lattice in $\Omega_{n}(G;K)$.

\begin{proof}[Proof of Proposition~\ref{prop:Dim2Base}]
    Using the free module structure in equation~\eqref{eq:Bigrading}, the result follows by obtaining generating sets and bases of each free module $\Omega_2^{v_t,v_h}(G;R)$ for some $v_t,v_h\in V_G$.
    If $d_G(v_t,v_h) = 0$, then $v_t = v_h$ and $\Omega_2^{v_t,v_h}(G;R)$ is generated by double edges and these also form a basis.
    If $d_G(v_t,v_h) = 1$, then either $\Omega_2^{v_t,v_h}(G;R) = 0$ or it is generated by a set of directed triangles which also form a basis.
    If $d_G(v_t,v_h)>2$, then $\Omega_2^{v_t,v_h}(G;R) = 0$.
    Hence, assume that $d_G(v_t,v_h) = 2$ and $\Omega_2^{v_t,v_h}(G;R) \neq 0$.
    Then either there is a multisquare between $v_t$ and $v_h$, or $\Omega_2^{v_t,v_h}(G;R)$ is generated by a directed square and the result follows similarly to the previous cases.
    \qed
\end{proof}

\section{Upper and lower extensions over face multihypergraphs}\label{sec:Extensions}

In this section we introduce the fundamental constructions on the path chain complex which are needed in the rest of the paper.
Throughout the section, let $G$ be a digraph, $n$ a non-negative integer, and $R$ a commutative ring with unit, unless stated otherwise.

\subsection{Extensions and complete face multihypergraphs}

We introduce here the processes of forming upper and lower extensions, which are methods of constructing upper and lower connected elements of $\mathcal{A}_{n+1}(G;R)$ from elements of $\Omega_n(G;R)$.
After this, we define face multigraphs and the more complex generalisation face multihypergraphs. These objects contain the data needed to ensure that the extensions formed belong to $\Omega_{n+1}(G;R)$.

The face multigraphs and face multihypergraphs serve essentially the same purpose, but it becomes essential to work with face multihypergraphs when working over a coefficient field  of odd prime characteristic. Since the face multigraphs we define are much easier to work with, and are sufficient when working with $\mathbb{Z}$, $\mathbb{Z}_2$, or $\mathbb{Q}$ coefficients, we feel it useful to develop their theory in parallel.

To end the section we provide a notion of completeness of a face multihypergraph with respect to a vertex.
The main result of the subsequent subsection states that the existence of a complete face multihypergraph is sufficient to obtain an extension within $\Omega_{n+1}(G;R)$.

\begin{definition}
    Let $x \in \Omega_{n}(G;R)$ have the form
    \begin{equation*}
        x = \sum_{e_{v_0,\dots,v_n}\in P^G_n} \alpha_{v_0,\dots,v_n} e_{v_0,\dots,v_n}
    \end{equation*}
    where each $\alpha_{v_0,\dots,v_n} \in R$.
    Define the \emph{upper extension} $[x]^v$ of $x$ by $v\in V_G$ to be
    \[
        [x]^v =
        \sum_{\substack{e_{v_0,\dots,v_n,v}\in P^G_{n+1} 
        }}
        \alpha_{v_0,\dots,v_n}
        e_{v_{0},\dots,v_{n}, v}
        \in \mathcal{A}_{n+1}(G;R).
    \]
    An \emph{upper extension} of $x$ is any upper extension by some $v \in V_G$.
    Similarly, 
    define the \emph{lower extension} $[x]_u$ of $x$ by $u \in V_G$ to be
    \[
        [x]_u =
        \sum_{\substack{e_{u,v_0,\dots,v_n}\in P^G_{n+1} 
        }}
        \alpha_{v_0,\dots,v_n}
        e_{u, v_{0},\dots,v_{n}}
        \in \mathcal{A}_{n+1}(G;R).
    \]
    A \emph{lower extension} of $x$ is any lower extension by some $u \in V_G$.
    Any upper extension or lower extension of $x$ will be referred to as an \emph{extension} of $x$.
\end{definition}
Note that the only nonzero summands of the upper extension $[x]^v$ defined above are formed from paths $e_{v_0,\dots,v_n}$ when $\alpha_{v_0,\dots,v_n} \neq 0$ and $(v_n,v)\in E_G$, where in particular $v_n\in \bar{h}(x)$. A similar statement can also be made for lower extensions and tail sets. 

In the sequel, we develop in parallel two separate notions of upper and lower face multigraphs corresponding to the use of upper and lower extensions above, which make use of the maps $\delta^h_{n,v}$ and $\delta^t_{n,v}$ for $v\in V_G$, respectively.
We focus primarily on the upper case, explaining any differences for the lower case in brackets immediately after.

\begin{definition}\label{def:FaceMultiGraph}
    Let $n\geq 0$ and $x_1,\dots,x_m\in\Omega_n(G;R)$ be such that $x_i \neq - x_j$ for any $i,j = 1,\dots,m$ and $i\neq j$.
    We define an \emph{upper (lower) face multigraph} on $x_1,\dots,x_m$ to be a labeled multigraph $F_G^n(x_1,\dots,x_m)$ together with a choice, for each $u\in V_G$ and $i = 1,\dots,m$ such that $\delta_{n,u}^h(x_i)\neq 0$ ($\delta_{n,u}^t(x_i)\neq 0$), of $x_i^{u,1},\dots,x_i^{u,m_i^u} \in \Omega_{n-1}(G;R)$ such that
    \begin{equation}\label{eq:n-1Faces}
        \delta^h_{n,u}(x_i) =
        x_i^{u,1} + \cdots + x_i^{u,m_i^u}
        \;\;\;
        (\delta^t_{n,u}(x_i) =
        x_i^{u,1} + \cdots + x_i^{u,m_i^u})
    \end{equation}
    where no sub-sequence of $x_i^{u,1},\dots,x_i^{u,m_i}$ sums to zero. This data is to be subject to the following conditions.
    \begin{enumerate}
        \item 
        The multigraph $F_G^n(x_1,\dots,x_m)$ has $m$ vertices which are labeled with a bijection to 
        \[
            \{x_1,\dots,x_m\}.
        \]
        \item
        There can be an edge between distinct vertices $x_i,x_j$ only if $\delta_{n,u}^h(x_i)\neq 0$ and $\delta_{n,u}^h(x_j)\neq 0$ ($\delta_{n,u}^t(x_i)\neq 0$ and $\delta_{n,u}^t(x_j)\neq 0$) for some $u \in V_G$. If an edge between these vertices exists, then it is labeled with a pair
        \[
            (\{x_i^{u,k},x_j^{u,l}\},u)
            \;\;\; \text{such that} \;\;\;
            x_i^{u,k} = -x_j^{u,l}
        \]
        where $k\in \{1,\dots,m_i^u$\} and $l \in \{1,\dots,m_j^u\}$.
        \item 
        For $u \in V_G$ and $i=1,\dots,m$ such that $\delta_{n,u}^h(x_i)\neq 0$ and $k=1,\dots,m_i$, each $x_i^{u,k}$ appears in no more than one edge label.
    \end{enumerate}
\end{definition}
When we draw face multigraphs, for simplicity, an edge labeled $(\{x_i^{u,k},x_j^{u,l}\},u)$ is often written only with the label $x_i^{u,k}$ or $x_i^{u,l}$ if no ambiguity can arise.
For example, when the $x_i^{u,k}$ are upper connected with head vertex $u$. Examples of face multigraphs are constructed and drawn in the next two subsections.
\begin{remark}
    Digraphs containing multisquares are the simplest examples demonstrating the necessity of allowing for the decomposition of the form in equation~\ref{eq:n-1Faces} as part Definition~\ref{def:FaceMultiGraph}.
    For example, we will later want to consider the case when the $x_i^{u,1}$ are elements of a basis.
    When $n=3$ and there is a multisquare between vertices $w$ and $u$, then a non-zero $\delta^h_{3,u}(x_i)$ lies in a $\Omega_n^{w,u}(G;R)$ for which a basis consists of at least two elements.
    A digraph where a face multigraph is required to contain multiple vertices labeled with the same element is given later in Example~\ref{exam:ArbitraryMultiplicities}.
\end{remark}

Definition~\ref{def:FaceMultiGraph} does not quite capture all that is needed when the ground ring $R$ has additive torsion of order greater than $2$. To remedy this we introduce also the notion of a \emph{face multihypergraph}. It is easy to see that the following definition extends~\ref{def:FaceMultiGraph} in that the face multihypergraphs introduced there reduce to face multigraphs when all additive torsion in $R$ has order $2$.
Since in many applications the ring $R$ is additively torsion free
or is the two-element ring $\mathbb{Z}_2$, we continue to develop also the less complex face multigraphs alongside the face multihypergraphs.

\begin{definition}\label{def:FaceMultihyperGraph}
    Let $n\geq 0$. Using the same notation and data as Definition~\eqref{def:FaceMultiGraph}, define an \emph{upper (lower) face multihypergraph} $F^n_G(x_1,\dots,x_m)$ to be any labeled multihypergraph constructed subject to the constraints (1) and (2) of Definition~\eqref{def:FaceMultiGraph} as well as the following additional conditions
    \begin{enumerate}
        \item[(3)]
        There can be a hyperedge between vertices $x_{i_1},\dots,x_{i_t}$ for $t\geq 3$ that are not all equal, only if $\delta_{n,u}^h(x_{i_j})\neq 0$ ($\delta_{n,u}^t(x_{i_j})\neq 0$) for $j=1,\dots,t$ and some $u \in V_G$.
        If a hyperedge between these vertices exists, then it is labeled with a pair
        \[
            (\{x_{i_1}^{u,k_1},\dots,x_{i_t}^{u,k_t}\},u)
            \;\;\; \text{such that} \;\;\;
            x_{i_1}^{u,k_1} = \cdots =x_{i_t}^{u,k_t}
            \;\;\; \text{and} \;\;\;
            x_{i_1}^{u,k_1} + \cdots + x_{i_t}^{u,k_t} = 0
        \]
        where $k_j\in \{1,\dots,m_{i_j}^u$\}.
        \item[(4)]
        For $u \in V_G$ and $i=1,\dots,m$ such that $\delta_{n,u}^h(x_i)\neq 0$ ($\delta_{n,u}^t(x_{i_j})\neq 0$) and $k=1,\dots,m_i$, each $x_i^{u,k}$ appears in no more than one hyperedge label using either parts (2) or (3).
    \end{enumerate}
\end{definition}

Face multihypergraphs will appear in this work only when we are simultaneously considering an extension of some form over the sum of their vertex labels, as is made precise in the following definition.
From now on we usually drop words upper and lower when it is clear which type of face multigraph or multihypergraph is under consideration.

We also set out the notions of properness and completeness with respect to an extension for a face multihypergraph. The complete extensions will be the central notion used in the reminder of the paper and it is these extensions which we later show provide a construction of path chains in one dimension higher.
Although it would be possible to package face multigraphs, proper extensions, and complete extension together as one concept, we choose to introduce these objects separately, as we believe these notions might be used independently in the construction of path homology algorithms.

\begin{definition}\label{def:extoffmhg}
    Given an upper (lower) face multihypergraph $F_G^n(x_1,\dots,x_m)$ and $v\in V_G$, 
    the upper extension $[x_1+\cdots+x_m]^v$ (lower extension $[x_1+\cdots+x_m]_v$) is called an \emph{upper (lower) extension} by $v$ over $F_G^n(x_1,\dots,x_m)$, when $(v_i,v)\in E_G$ ($(v,v_i)\in E_G$) for each $v_i \in \bar{h}(x_i)$ ($v_i \in \bar{t}(x_i)$) and $i=1,\dots,m$.
    
    Using the notation of Definitions~\ref{def:FaceMultiGraph} and ~\ref{def:FaceMultihyperGraph}, an upper (lower) extension by $v$ over $F_G^n(x_1,\dots,x_m)$ is said to be $v$\emph{-proper} if all hyperedges $(\{x_{i_1}^{u,k_1},\dots,x_{i_t}^{u,k_t}\},u)$ corresponding to $u\in V_G$ are such that 
    \[
        (u,v) \notin E_G \;\;\; ((v,u) \notin E_G) \;\;\; \text{and} \;\;\; u \neq v.
    \]
    The upper extension of $v$ over $F_G^n(x_1,\dots,x_m)$ is further called $v$-\emph{complete} if the converse also holds. That is, if for any
    $i=1,\dots,m$ and $u \in \bar{h}(\delta^h(x_i)) \setminus\{v\}$ ($u \in \bar{t}(\delta^t(x_i)) \setminus\{v\}$)
    with $(u,v)\not\in E_G$ ($(v,u)\not\in E_G)$), $F^n_G(x_1,\dots,x_n)$ has
    for each $k=1,\dots,m_i^u$ a hyperedge labeled $(\{x_{i_1=1}^{u,k_1=k},\dots,x_{i_t}^{u,k_t}\},u)$ for some $k_j=1,\dots,m_{i_j}^u$, $j=2,\dots,t$.
\end{definition}
The terminology introduced in Definition~\ref{def:extoffmhg} is equally applicable to face multigraphs and in the sequel will be used in this context freely.
\begin{remark}\label{rmk:EdgeLables}
    Suppose $x \in \Omega_n(G;R)$ and $u,v \in V_G$, with either $(u,v) \in E_G$, or $u=v$ and $\delta_{n,u}^h(x) \neq 0$.
    For any $v$-proper face multihypergraph that can be upper extended by $v$, the definition of $v$-properness implies that it contains no edges or hyperedges corresponding to $\delta^h_{n,u}(x)$.
    In this case we can ignore the decomposition of $\delta_{n,u}^h(x)$ in equation~\eqref{eq:n-1Faces} of Definition~\ref{def:FaceMultiGraph}, as all possible choices yield isomorphic face multigraphs.
\end{remark}

Later, in Section~\ref{sec:Connectedness}, Definitions~\ref{def:connectedeness}, \ref{def:connectedeness2} we provide a further, important property for face multigraphs and face multihypergraphs. This is a notion of connectedness that is stronger than the notion of connectedness of the underlying multigraph or multihypergraph itself.

\subsection{Initial examples}

In this section we give a number of simple explicit and more general examples of extensions over face multigraphs and face multihypergraphs. 

\begin{exmp}\label{exam:VerticesAndLines}
    For any $v\in V_G$, an element $x \in \Omega_1(G;R)$ which is obtained as an upper extension over a connected $v$-complete face multigraph must be extended over a multigraph which consists of a single vertex,
    since
    \[
        \delta_{n,u}^h\delta^h_{n,v}(x) = 0
    \]
    for any $u\in V_G$.
    As discussed in Section~\ref{sec:LowDimBasis}, the set $\{e_u\mid u\in V_G\}$ is a basis of $\Omega_0(G;R)$.
    The extension described above over the basis elements $e_u \in \Omega_0(G;R)$ is a face multigraph consisting of a single vertex labeled $e_u$ such that $(u,v)\in E_G$.
    That is, such extensions correspond to edges in $G$ that begin at $u$ and end at some $v\in V_G$.
    In particular, recall again from Section~\ref{sec:LowDimBasis}  that the set of $e_{u,v}$ such that $(u,v)\in E_G$ forms a basis of $\Omega_1(G;R)$.
    
    One may consider also extensions over face multihypergraphs in this context, but it is easy to see that this gives rise to no additional extensions, and the statements above remain true in this case.
\end{exmp}

\begin{exmp}\label{exam:DirectedSquare}
    Suppose that $G$ is a digraph containing the subdigraph
    \begin{center}
        \tikz {
            \node (a) at (0,0) {$u$};
            \node (b1) at (-1.5,1.5) {$v_1$};
            \node (c) at (0,3) {$w$};
            \node (b2) at (1.5,1.5) {$v_2$};
            \draw[->] (a) -- (b1);
            \draw[->] (b1) -- (c);
            \draw[->] (b2) -- (c);
            \draw[->] (a) -- (b2);
        }
    \end{center}
    such that $(u,w) \notin G$.
    We have that $e_{u,v_1}, e_{u, v_2} \in \Omega_1(G;R)$ and $e_u \in \Omega_0(G;R)$.
    The directed square
    \[
        e_{u,v_1,w} - e_{u,v_2,w} \in \Omega_2(G;R)
    \]
    can be obtained as the upper extension $[e_{u,v_1}-e_{u, v_2}]^w$.
    Furthermore, this upper extension is an upper extension over the face multigraph
    \begin{center}
        \tikz {
            \node (b1) at (0,0) {$e_{u,v_1}$};
            \node (b2) at (3,0) {$- e_{u, v_2}$};
            \draw[-] (b1) -- (b2) node[midway,above] {$e_u$};
        }
    \end{center}
    where  we apply remark~\ref{rmk:EdgeLables} to the edge label, whose full label would be $(\{e_u, -e_u \},u)$.
    In fact, the above upper extension is $w$-complete over the face multigraph, as $(u,w) \notin G$ and $u \neq w$ with
    \[
        \delta_{n,u}^h(e_{u,v_1}) =
        \delta_{n,u}^h(e_{u,v_2}) =
        e_a.
    \]
    Moreover,
    \[
        \delta_{n,v}^h(e_{u,v_1}) =
        \delta_{n,v}^h(e_{u,v_2}) =
        0
    \]
    for any $v\in V_G$ such that $v \neq u$.
    Therefore, the face multigraph above is the unique connected, $w$-complete face multigraph containing vertices $e_{u,v_1}$ and $-e_{u,v_2}$.
    Similarly, it can be seen that directed triangle or double edge are extensions over a face multigraph consisting of a single vertex.
    Moreover, the example is again equally valid for multihypergraphs. 
\end{exmp}

\begin{exmp}\label{ex:SimpleMultiedge}
    Let $G$ be the following digraph containing a single multisquare.
    \begin{center}
        \tikz {
            \node (a) at (0,0) {$u$};
            \node (b1) at (-1.5,1.5) {$v_1$};
            \node (c) at (0,3) {$w$};
            \node (b3) at (1.5,1.5) {$v_3$};
            \node (b2) at (0,1.5) {$v_2$};
            \draw[->] (a) -- (b1);
            \draw[->] (b1) -- (c);
            \draw[->] (b2) -- (c);
            \draw[->] (a) -- (b2);
            \draw[->] (b3) -- (c);
            \draw[->] (a) -- (b3);
        }
    \end{center}
    Up to a choice of signs, there are exactly three $w$-complete, connected upper face multigraphs that can be constructed out of paths $e_{u,v_1}, e_{u,v_2}, e_{u,v_3} \in \Omega_{1}(G;R)$, namely
    \begin{center}
        \tikz{
            \node (b1) at (0,0) {$e_{u,v_1}$};
            \node (b2) at (3,0) {$- e_{u, v_2}$};
            \draw[-] (b1) -- (b2) node[midway,above] {$e_u$};
        }
        \;\;\;
        \tikz {
            \node (b1) at (0,0) {$e_{u,v_1}$};
            \node (b2) at (3,0) {$- e_{u, v_3}$};
            \draw[-] (b1) -- (b2) node[midway,above] {$e_u$};
        }
        \;\;\;
        \tikz {
            \node (b1) at (0,0) {$e_{u,v_2}$};
            \node (b2) at (3,0) {$- e_{u, v_3}$};
            \draw[-] (b1) -- (b2) node[midway,above] {$e_u$};
        }
    \end{center}
    When $R=\mathbb{Z}_3$ there is in addition the following connected upper face multihypergraph.
    \begin{center}
    \begin{tikzpicture}
        \node (v1) at (0,0) {};
        \node (v2) at (4,0) {};
        \node (v3) at (2,2.828) {};
        \filldraw[fill=blue!10] ($(v1)+(-0.25,0.5)$)
            -- ($(v3)+(-0.6,0)$)
            to[out=55,in=180] ($(v3)+(0,0.5)$)
            to[out=0,in=125] ($(v3)+(0.6,0)$)
            -- ($(v2)+(0.25,0.5)$)
            to[out=300,in=90] ($(v2)+(0.5,0)$)
            to[out=275,in=0] ($(v2)+(0,-0.5)$)
            -- ($(v1)+(0,-0.5)$)
            to[out=180,in=275] ($(v1)+(-0.5,0)$)
            to[out=90,in=240] ($(v1)+(-0.25,0.5)$);
        \node (l) at (2,1) {$(\{e_u,e_u,e_u\},u)$};
        \node (v-1) at (0.15,-0.05) {$e_{u,v_1}$};
        \node (v-2) at (3.85,-0.05) {$e_{u,v_2}$};
        \node (v-3) at (2,2.75) {$e_{u,v_3}$};
    \end{tikzpicture}
    \end{center}
    However, $\dim\Omega_2(G;\mathbb{Z}_3)=2$, and a basis for $\Omega_2(G;\mathbb{Z}_3)$ can be obtained by forming extensions over any two of the face multigraphs above.
    Digraphs demonstrating the necessity of allowing face multihypergraphs with hyperedges of size greater than $2$ in forming a basis are given in Example~\ref{exam:DiffEulerOverFiniteFields}.
\end{exmp}

An \emph{inclusion} of face multihypergraphs is an inclusion of multihypergraphs that preserves 
vertex and edge labels.
A face multihypergraph $F_1$ is a \emph{sub-face multihypergraph} of a face multihypergraph $F_2$ if there is an inclusion of face multihypergraph from $F_1$ to $F_2$.

\begin{exmp}\label{ex:CompleteMaximal}
    For each $v \in V_G$, by definition, any connected $v$-complete face multihypergraph is maximal among all connected $v$-proper face multihypergraph when these objects are partially ordered by inclusion of sub-face multihypergraphs.
\end{exmp}

A pair $(x,v)$ for $x\in \Omega_n(G;R)$ and $v\in V_G$ is called \emph{upper isolated} if for any $u\in \bar{h}(\delta_{n-1}^h\delta_{n,v}^h(x))$ and $w_h\in \bar{h}(x)$, either $u=w_h$ or there is an edge $(u,w_h)\in E_G$.
Similarly, a pair $(x,v)$ for $x\in \Omega_n(G;R)$ and $v\in V_G$ is called \emph{lower isolated} if for any $u\in \bar{t}(\delta^t_{n-1}\delta^t_{n,v}(x))$ and $w_t\in \bar{t}(x)$, either $u=w_t$ or there is an edge $(w_t,u)\in E_G$.
A digraph $G$ is called \emph{transitive} when for any $u,v,w \in V$, if $(u,v),(v,w) \in E_G$ then $(u,w)\in E_G$. 

\begin{exmp}
    A connected $v$-proper upper (lower) face multihypergraph containing a vertex $x$ such that $(x,v)$ is upper (lower) isolated consists of only that vertex. 
    In particular, if $G$ is a transitive digraph, then all connected $v$-proper upper (lower) face multihypergraphs are of this form.
\end{exmp}

\begin{exmp}\label{exam:Dimension3}
    Recall from Proposition~\ref{prop:Dim2Base} that directed squares of the form
    \[
        S_{u,v_1,v_2,w}= e_{u,v_1,w} - e_{u,v_2,w} \in \Omega_2(G;R)
    \]
    such that $(u,w) \notin G$, $u \neq w$ and $v_1 \neq v_2$, directed triangles
    \[
        T_{u,v,w}= e_{u,v,w} \in \Omega_2(G;R)
    \]
    such that $(u,w) \in G$, and double edges
    \[
        E_{u,v} = e_{uvu} \in \Omega_2(G;R)
    \]
    for some $u,v,v_1,v_2,w \in V_G$
    generate $\Omega_2(G;\mathbb{Z})$.
    
    For any $v'\in V_G$, the only nonzero $\delta_{n,v'}^h(T_{u,v,w})$ and $\delta_{n,v'}^h(E_{u,v})$ occur when $v'=v$, in which case
    \[
        \delta_{n,v}^h(T_{u,v,w}) = e_{u,v}
        \;\;\; \text{and} \;\;\;
        \delta_{n,v}^h(E_{u,v}) = e_{u,v}.
    \]
    Hence, if $T_{u,v,w}$ or $E_{u,v}$ appears as a vertex in an upper face multigraph, then it has valence at most $1$.
    Similarly, if $S_{u,v_1,v_2,w}$ appears as a vertex in an upper face multigraph, then it has valence at most $2$.
    Therefore, any upper face multigraph with vertices, double edges, directed triangles, or directed squares in $\Omega_3(G;R)$ consists of a disjoint union of face multigraphs that are either lines or cycle as unlabeled multigraphs.
    
    Clearly, an analogous statement also holds for lower face multigraphs, and for lower or upper face multihypergraphs when $G$ has no multisquares or when $R$ is an abelian group with no additive torsion of order greater than $2$.
\end{exmp}

\subsection{Existence of extensions over face multihypergraphs}

The next proposition demonstrates the necessity of considering vertex complete extensions over face multihypergraphs.

\begin{proposition}\label{prop:GraphExtentsion}
    Let $F_G^n(x_1,\dots,x_m)$ be a face multihypergraph, where $x_1,\dots,x_m\in \Omega_n(G;R)$, and let $v \in V_G$. If the upper extension of $x_1+\cdots+x_m$ by $v$ is $v$-complete over $F_G^n(x_1,\dots,x_m)$, 
    then
    \[
        [x_1 + \cdots + x_m]^v \in \Omega_{n+1}(G;R).
    \]
    Similarly, if the lower extension of $x_1+\cdots+x_m$ by $v$ is $v$-complete over $F_G^n(x_1,\dots,x_m)$, then
    \[
        [x_1 + \cdots + x_m]_v \in \Omega_{n+1}(G;R).
    \]
\end{proposition}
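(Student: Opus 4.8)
The plan is to apply the criterion of Lemma~\ref{lem:NoPositionSwap}. Write $y:=[x_1+\cdots+x_m]^v$; this lies in $\mathcal{A}_{n+1}(G;R)$ by construction, so it suffices to show $\partial^M_{n+1,n+1,i}(y)=0$ for every $i=1,\dots,n$. Two observations set the stage. First, the upper extension by $v$ is additive, so $y=\sum_{j=1}^m[x_j]^v$. Second, because the extension is taken \emph{over} the face multihypergraph $F^n_G(x_1,\dots,x_m)$, every vertex of $\bar h(x_j)$ is joined to $v$ by an edge, so $[x_j]^v$ is obtained from $x_j$ by literally appending the vertex $v$ at the end of each of its allowed paths. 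It is convenient to write, for a magnitude chain $z$, $z\frown v$ for the chain obtained by appending $v$ to every tuple occurring in $z$ (with $0\frown v=0$); this makes sense coefficientwise for every chain that will appear, because every tuple in question has last vertex different from $v$. I would then treat separately the positions $i=1,\dots,n-1$ and the single top position $i=n$.

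For $i=1,\dots,n-1$ the three vertices $v_{i-1},v_i,v_{i+1}$ that control $\partial^M_{\bullet,\bullet,i}$ all belong to the original path $e_{v_0,\dots,v_n}$ and are unaffected by appending $v$. Reading off the definition of the diagonal magnitude differential coefficient by coefficient gives $\partial^M_{n+1,n+1,i}\bigl([x_j]^v\bigr)=\bigl(\partial^M_{n,n,i}(x_j)\bigr)\frown v$. Since $x_j\in\Omega_n(G;R)$, Lemma~\ref{lem:NoPositionSwap} gives $\partial^M_{n,n,i}(x_j)=0$, so this vanishes, whence $\partial^M_{n+1,n+1,i}(y)=0$.

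The substantive case is $i=n$, whose controlling triple is $v_{n-1},v_n,v$. Grouping the paths $e_{v_0,\dots,v_n}$ of $x_j$ by the vertex $u:=v_{n-1}$ and summing over the head $v_n$, a short computation with $\partial^M_{n+1,n+1,n}$ identifies
\[
    \partial^M_{n+1,n+1,n}\bigl([x_j]^v\bigr)=\sum_{\substack{u\in V_G\\ (u,v)\notin E_G,\ u\neq v}}\bigl(\delta^h_{n,u}(x_j)\bigr)\frown v ,
\]
the terms with $(u,v)\in E_G$ or $u=v$ dropping out because then $d_G(u,v)\neq 2$, so $\partial^M_{n+1,n+1,n}$ kills the corresponding paths. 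Summing over $j$ and using additivity of $z\mapsto z\frown v$, I am reduced to proving that
\[
    \sum_{j=1}^m\delta^h_{n,u}(x_j)=0
    \qquad\text{whenever }u\in V_G,\ (u,v)\notin E_G,\ u\neq v .
\]
This is exactly where $v$-completeness enters. For such a $u$, an index $i$ with $\delta^h_{n,u}(x_i)\neq 0$ is precisely one with $u\in\bar h(\delta^h(x_i))\setminus\{v\}$ and $(u,v)\notin E_G$, so Definition~\ref{def:extoffmhg} guarantees that, for every summand $x_i^{u,k}$ of the chosen decomposition $\delta^h_{n,u}(x_i)=x_i^{u,1}+\cdots+x_i^{u,m_i^u}$ from equation~\eqref{eq:n-1Faces}, the multihypergraph $F^n_G$ carries a hyperedge labeled over $u$ in which $x_i^{u,k}$ occurs; by condition~(4) of Definition~\ref{def:FaceMultihyperGraph} it occurs in exactly one. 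Conversely, conditions~(2) and~(3) force every entry of every hyperedge labeled over $u$ to be one of the $x_i^{u,k}$. Therefore the multiset $\{x_i^{u,k}\}_{i,k}$ is the disjoint union of the label-multisets of the hyperedges labeled over $u$, each of which sums to $0$ by conditions~(2) and~(3); summing over all of them yields $\sum_{i,k}x_i^{u,k}=0$, that is, $\sum_{j=1}^m\delta^h_{n,u}(x_j)=0$. Hence $\partial^M_{n+1,n+1,n}(y)=0$, and together with the previous paragraph and Lemma~\ref{lem:NoPositionSwap} this gives $y\in\Omega_{n+1}(G;R)$.

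Finally, the lower statement follows by the mirror-image argument: one prepends $v$ at the front of each path rather than appending it, the distinguished position becomes $i=1$ rather than $i=n$, and $\delta^t_{n,u}$ with the tail sets $\bar t$ replace $\delta^h_{n,u}$ and the head sets $\bar h$ throughout; the lower $v$-completeness condition is invoked in precisely the same way. The step I expect to be the main obstacle is the bookkeeping in the $i=n$ case: one has to match the grouping of the extended paths by the vertex $v_{n-1}$ correctly with the face maps $\delta^h_{n,u}$, and then recognise that $v$-completeness asserts exactly that the decompositions of the various $\delta^h_{n,u}(x_j)$ are partitioned into hyperedges with vanishing label-sums — the rest is a routine unwinding of definitions.
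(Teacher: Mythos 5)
Your proof is correct and follows the same overall strategy as the paper: reduce via Lemma~\ref{lem:NoPositionSwap} to checking that $\partial^M_{n+1,n+1,n}$ annihilates the extension, and then use $v$-completeness of the face multihypergraph to produce the cancellation. The difference is one of organisation, and yours is arguably tighter. The paper argues path-by-path: it considers an individual $(n-1)$-path $e_{u_0,\dots,u_{n-1}}$ occurring in some $x_i^{u_{n-1},k}$, locates its partner(s) in the same hyperedge, and cancels the associated summands of $\partial^M_{n+1,n+1,n}$ one at a time; this forces the paper to flag (and dispose of) the subtlety that some paths appearing in the chosen decomposition $x_i^{u,1}+\cdots+x_i^{u,m_i^u}$ may not extend to paths of $x_i$. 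Your version collapses the top-face computation to the single chain-level identity $\partial^M_{n+1,n+1,n}([x_j]^v)=\sum_{u}\bigl(\delta^h_{n,u}(x_j)\bigr)\frown v$ (sum over $u$ with $(u,v)\notin E_G$, $u\neq v$), reducing the proposition to $\sum_j\delta^h_{n,u}(x_j)=0$ for each such $u$. That statement is then an immediate consequence of $v$-completeness together with condition~(4) of Definition~\ref{def:FaceMultihyperGraph}: the multiset $\{x_i^{u,k}\}_{i,k}$ is partitioned by the $u$-labelled hyperedges, each of which has zero label-sum by conditions~(2) and~(3). Working with the face maps $\delta^h_{n,u}$ rather than individual paths means the paper's ``non-extending paths'' caveat never arises, since those terms cancel inside $\delta^h_{n,u}(x_j)$ before the operator $(-)\frown v$ is ever applied. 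So: same key lemma, same source of cancellation, but your packaging sidesteps a genuine bookkeeping wrinkle in the original proof.
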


\begin{proof}
    We prove the case of upper extensions, the proof for lower extensions being similar.
    As $x_1+\cdots+x_m \in \Omega_n(G)$, by Lemma~\ref{lem:NoPositionSwap} we have that $\partial^M_{n,n,i}(x_1+\cdots+x_m) = 0$ and so $\partial^M_{n+1,n+1,i}([x_1+\cdots+x_m]^v) = 0$ for each $i=1,\dots,n-1$.
    Therefore, it remains to check that 
    \[
        \partial^M_{n+1,n+1,n}([x_1+\cdots+x_m]^v) = 0.
    \]
    
    To begin, consider any nonzero path $e_{u_0,\dots,u_{n-1}}$ occurring as a nonzero summand $\alpha e_{u_0,\dots,u_{n-1}}$ in some $x_i^{u_{n-1},k}$ for some $i = 1 ,\dots, m$, $k=1,\dots,m_i$ from the sum decomposition in equation~\eqref{eq:n-1Faces} of Definition~\ref{def:FaceMultiGraph}.
    If 
    $(u_{n-1},v) \in E_G$ or $u_{n-1} = v$,
    then by definition
    \begin{equation}\label{eq:DegenerateCase}
         \partial^M_{n+1,n+1,n}([[e_{u_0,\dots,u_{n-1}}]^{u_n}]^v )
         = 0
    \end{equation}
    for any $u_{n}\in V_G$. Thus we reduce to the case that  $(u_{n-1},v) \notin E_G$ and $u_{n-1} \neq v$.
    
    We consider first the case of edges of $F_G^n(x_1,\dots,x_m)$ and then hyperedges of size greater than $2$.
    When
    $(u_{n-1},v) \notin E_G$, $u_{n-1} \neq v$ and $x_i^{u_{n-1},k}$ appears uniquely in an edge of $F_G^n(x_1,\dots,x_m)$,
    by the completeness of $F_G^n(x_1,\dots,x_m)$, the unique edge associated to $x_i^{u_{n-1},k}$ provides a unique $j\in \{1,\dots,m\}$ such that $j\neq i$ and $l\in \{1,\dots,m_j\}$ with
    \[
        x_i^{u_{n-1},k}=-x_j^{u_{n-1},l}.
    \]
    In particular, $-\alpha e_{u_0,\dots,u_{n-1}}$ occurs as a summand in $x_j^{u_{n-1},l}$ uniquely corresponding to summand $\alpha e_{u_0,\dots,u_{n-1}, u_j}$ in $x_{i}$.
    Moreover,
    \begin{equation}\label{eq:Non-DegenerateCase}
        \partial^M_{n+1,n+1,n}(
        [[e_{u_0,\dots,u_{n-1}}]^{u_n}]^v
        +
        [[-e_{u_0,\dots,u_{n-1}}]^{u'_n}]^v
        )
        = 0
    \end{equation}
    for any $u_{n}, u'_n\in V_G$ such that $(u_{n-1},u_n),(u_{n-1},u'_n),(u_n,v),(u'_n,v)\in E_G$.
    
    We now return to the remaining case of a hyperedge of size greater than $2$.
    Thus suppose $(u_{n-1},v) \notin E_G$, $u_{n-1} \neq v$ and $x_i^{u_{n-1},k}$ appears uniquely in a hyperedge of $F_G^n(x_1,\dots,x_m)$ of size greater than $2$. Then by the completeness of $F_G^n(x_1,\dots,x_m)$, the unique edge associated to $x_i^{u_{n-1},k}$ of size $t+1 \geq 3$ provides unique $i_1,\dots,i_t \in \{ 1,\dots,m \}$ such that $i \neq i_{j'}$ for some $j' = 1,\dots,t$ with
    \[
        x_i^{u_{n-1},k}=x_{i_j}^{u_{n-1},k_j}
        \;\;\; \text{and} \;\;\;
        x_i^{u_{n-1},k} + x_{i_1}^{u_{n-1},k_1} + \cdots + x_{i_t}^{u_{n-1},k_t} = 0
    \]
    for each $j=1,\dots,t$, where $k_j \in \{1,\dots,m_{i_j}^u\}$ is as in Definition~\ref{def:FaceMultihyperGraph}.
    In particular, $\alpha e_{u_0,\dots,u_{n-1}}$ occurs as a summand in each $x_{i_j}^{u_{n-1},k_j}$ uniquely corresponding to a summand $\alpha e_{u_0,\dots,u_{n-1}, u_{i_j}}$ in $x_{i}$ for each $j=1,\dots,t$.
    Moreover,
    \begin{equation}\label{eq:Non-DegenerateCase2}
        \partial^M_{n+1,n+1,n}(
        [[e_{u_0,\dots,u_{n-1}}]^{u_n}]^v
        +
        [[e_{u_0,\dots,u_{n-1}}]^{u^1_n}]^v
        + \cdots +
        [[e_{u_0,\dots,u_{n-1}}]^{u^t_n}]^v
        )
        = 0
    \end{equation}
    for any $u_{n}, u^{1}_n,\dotsm, u^{t}_n\in V_G$ such that 
    \[
        (u_{n-1},u_n),(u_{n-1},u^1_n),\dots,(u_{n-1},u^t_n),(u_n,v),(u_n^1,v),\dots,(u_n^t,v)\in E_G.
    \]\vspace{-0.5cm}
    
    We note that it is possible that the path $e_{u_0,\dots,u_{n-1}}$ does not upper extend by some $u_n \in V_G$ and correspond to a path in a nonzero summand of $x_i$.
    However, all such paths cancel within the decomposition $x_i^{u_{n-1},1}+\cdots+x_i^{u_{n-1},m_i} = \delta_{n,u_{n-1}}^h(x_i)$ prior to upper extension.
    Otherwise, equations~\eqref{eq:Non-DegenerateCase} and \eqref{eq:Non-DegenerateCase2} together with the unique determination of $j$, $i_j$ and
    the case corresponding to equation~\eqref{eq:DegenerateCase}, imply that $\partial^M_{n+1,n+1,n}([x^{n-1}_1+\cdots+x^{n-1}_m]^v) = 0$ as required.
    \qed
\end{proof}
We remark that the proof of Proposition~\ref{prop:GraphExtentsion} for the special case of face multigraphs alone is somewhat simpler.

\subsection{Connectedness and mutations of face multihypergraphs}\label{sec:Connectedness}

We now give a variant notion of connectedness for face multihypergraphs which is stronger than that of the connectedness of the underlying multihypergraph.
This is done initially in the case of face multigraphs, and then for face multihypergraphs.

The incentive for introducing this notation of connectedness will become apparent in the next section while trying to reduce the size of the generating sets for $\Omega_n(G;R)$. There are usually many face multihypergraphs which can be extended to the same element, and restricting to
mutation equivalence classes
is a way to reduce the number under consideration.

\begin{definition}\label{def:MutationMultigraph}
    A face multigraph $F_G^n(x_1,\dots,x_m)$ is called a \emph{mutation} of a face multigraph $\bar{F}_G^n(x_1,\dots,x_m)$ on $x_1,\dots,x_m\in \Omega_n(G;R)$ if $F_G^n(x_1,\dots,x_m)$ can be obtained from $\bar{F}_G^n(x_1,\dots,x_m)$ by replacing two edges $(\{x_i^{u,k},x_j^{u,l}\},u)$ and $(\{x_{i'}^{u,k'},x_{j'}^{u,l'}\},u)$ by edges
    \begin{align*}
    (\{x_i^{u,k},x_{j'}^{u,l'}\},u) \: &\text{and} \: (\{x_j^{u,l},x_{i'}^{u,k'}\},u),
    \: \text{or} \\
    (\{x_i^{u,k},x_{i'}^{u,k'}\},u) \: &\text{and} \: (\{x_j^{u,l},x_{j'}^{u,l'}\},u).
    \end{align*}
    Mutations are symmetric by construction and generate an equivalence relation on face multigraphs $F_G^n(x_1,\dots,x_m)$ with $x_1,\dots,x_m\in \Omega_n(G;R)$.
    We therefore call two face multigraphs \emph{mutation equivalent} if they are obtainable from each other by a sequence of mutations.
\end{definition}

By construction, mutations preserve properness and completeness of face multigraphs.
Furthermore, upper (lower) extensions by a vertex $v\in V_G$ over a $v$-complete face multigraph $F^n_G(x_1,\dots,x_m)$ are invariant under mutation equivalence of the face multigraph as the extension only depends on the choice of $x_1,\dots,x_m \in \Omega_n(G;R)$ and $v\in V_G$.

\begin{exmp}\label{exam:2Trapezohedron}
    Consider the trapezohedron $\mathbb{T}_2$ of order $2$ from Definition~\ref{def:Trapezohedron}.
    \begin{center}
        \tikz {
            \node (T) at (0,0) {$T$};
            \node (a1) at (-0.75,1) {$u_1$};
            \node (a2) at (0.75,1) {$u_2$};
            \node (b1) at (-0.75,2.5) {$v_1$};
            \node (b2) at (0.75,2.5) {$v_2$};
            \node (H) at (0,3.5) {$H$};
            \draw[->] (T) -- (a1);
            \draw[->] (T) -- (a2);
            \draw[->] (a1) -- (b1);
            \draw[->] (a1) -- (b2);
            \draw[->] (a2) -- (b1);
            \draw[->] (a2) -- (b2);
            \draw[->] (b1) -- (H);
            \draw[->] (b2) -- (H);
        }
    \end{center}
    There are up to sign two directed squares 
    \begin{align*}
        S_1 &=
        e_{T,u_1,v_1}
        -
        e_{T,u_2,v_1}
        \in \Omega_2(\mathbb{T}_2;R)
        \\
        S_2 &=
        e_{T,u_1,v_2}
        -
        e_{T,u_2,v_2}
        \in \Omega_2(\mathbb{T}_2;R)
    \end{align*}
    whose tail vertices are $T$.
    The upper extension $[S_1-S_2]^H$ is an upper extension over the $H$-complete face multigraph
    \begin{center}
        \tikz {
            \node (S1) at (0,0) {$S_1$};
            \node (S2) at (2,0) {$-S_2.$};
            \draw[-] (S1) to [out=45,in=135] node[pos=0.5,above] {$T\to u_1$} (S2);
            \draw[-] (S1) to [out=315,in=225] node[pos=0.5,below] {$T\to u_2$} (S2);
        }
    \end{center}
    Hence, $[S_1-S_2]^H \in \Omega_3(G;R)$ by Proposition~\ref{prop:GraphExtentsion} and coincides with the trapezohedron element generating $\Omega_3(\mathbb{T}_2;R)$.
    
    On the other hand, consider the following two H-complete face multigraphs
    \begin{center}
        \tikz {
            \node (S1) at (0,0) {$S_1$};
            \node (S2) at (2,2) {$-S_2$};
            \node (S-1) at (0,4) {$S_1$};
            \node (S-2) at (-2,2) {$-S_2$};
            \draw[-] (S1) -- (S2)  node[midway,right] {$\:T\to u_1$};
            \draw[-] (S1) -- (S-2)  node[midway,left] {$T\to u_2\:$};
            \draw[-] (S2) -- (S-1)  node[midway,right] {$\:T\to u_2$};
            \draw[-] (S-2) -- (S-1) node[midway,left] {$T\to u_1\:$};
        }
        \;\;\;\;\;\;\;\;\;\;\;\;
        \tikz {
            \node (S1) at (0,0) {$S_1$};
            \node (S2) at (2,2) {$-S_2$};
            \node (S-1) at (0,4) {$S_1$};
            \node (S-2) at (-2,2) {$-S_2$};
            \draw[-] (S1) -- (S2)  node[midway,right] {$\:T\to u_1$};
            \draw[-] (S2) to [out=185,in=85] node[pos=0.5,left] {$T\to u_2\:$} (S1);
            \draw[-] (S-1) to [out=280,in=5] node[pos=0.5,right] {$\:T\to u_2$} (S-2);
            \draw[-] (S-2) -- (S-1)  node[midway,left] {$T\to u_1\:$};
        }
    \end{center}
    and note that they are mutation equivalent. The element $[S_1+S_1-S_2-S_2]^H$ upper extends over the left-hand face multigraph, and disconnectedness of the right-hand face multigraph gives
    \[
        [S_1+S_1-S_2-S_2]^H = 2 [S_1-S_2]^H.
    \]
    
    Hence, for example, $[S_1+S_1-S_2-S_2]^H$ cannot be an element of a basis of $\Omega_3(\mathbb{T}_2;\mathbb{Z})$.
    More generally, the unique generator up to sign of $\Omega_3(\mathbb{T}_t;\mathbb{Z})$ for $t\geq 3$ (c.f. Definition \ref{def:Trapezohedron}) can be similarly obtained as an extension over a face multigraph whose underlying multigraph is a cycle on $t$ vertices.
\end{exmp}

The previous example demonstrates that within mutation equivalence classes, connectedness of the underlying multigraphs need not be preserved.
Therefore, we require a stronger notion of connectedness for mutation equivalence classes of face multigraphs. 

\begin{definition}\label{def:connectedeness}
    A face multigraph is called \emph{strongly connected} if it and all its mutation equivalent multigraphs are connected as multigraphs.
\end{definition}

The definitions given above are only applicable to face multigraphs. More complicated definitions must be formulated to cover the more general case of face multihypergraphs, and we turn now to this task.

\begin{definition}\label{def:MutationMultihypergraph}
    Let $x_1,\dots,x_m\in \Omega_n(G;R)$. A face multihypergraph $F_G^n(x_1,\dots,x_m)$ is called a \emph{mutation} of a face multihypergraph $\bar{F}_G^n(x_1,\dots,x_m)$ if it can be obtained from $\bar{F}_G^n(x_1,\dots,x_m)$ by any of the following operations;
    \begin{enumerate}
        \item a mutation on edges of the form in Definition~\ref{def:MutationMultigraph},
        \item replacing two hyperedges
    \[
        (\{x_{i_1}^{u,k_1},\dots,x_{i_t}^{u,k_t}\},u)
        \;\;\; \text{and} \;\;\;
        (\{x_{i'_1}^{u,k'_1},\dots,x_{i'_t}^{u,k'_t}\},u)
    \]
    for $t\geq 3$ by edges
    \begin{align*}
        (\{x_{i_j}^{u,k_j},x_{i'_j}^{u,k'_j}\},u)
    \end{align*}
    for each $j=1,\dots,t$, or by the reverse replacement,
    \item replacing two hyperedges
    \[
        (\{x_{i_1}^{u,k_1},\dots,x_{i_t}^{u,k_t}\},u)
        \;\;\; \text{and} \;\;\;
        (\{x_{i_{t+1}}^{u,k_{t+1}},\dots,x_{i_{2t}}^{u,k_{2t}}\},u)
    \]
    for $t\geq 3$ by hyperedges
    \begin{align*}
        (\{x_{i_{j_1}}^{u,k_{j_1}},\dots,x_{i_{j_t}}^{u,k_{j_t}}\},u)
        \;\;\; \text{and} \;\;\;
        (\{x_{i_{j_{t+1}}}^{u,k_{j_{t+1}}},\dots,x_{i_{j_{2t}}}^{u,k_{j_{2t}}}\},u)
    \end{align*}
    for some permutation $j_1,\dots,j_{2t}$ of $1,\dots,2t$.
    \end{enumerate}
    Mutations generate an equivalence relation on face multihypergraphs $F_G^n(x_1,\dots,x_m)$ with $x_1,\dots,x_m\in \Omega_n(G;R)$.
    We call two face multihypergraphs mutation \emph{equivalent} if they are obtainable from each other by a sequence of mutations.
\end{definition}

\begin{definition}\label{def:connectedeness2}
    A face multihypergraph is called \emph{strongly-connected} if it and all its mutation equivalent multihypergraphs are connected as multihypergraphs.
\end{definition}

From the perspective of obtaining a generating set for $\Omega_n(G;R)$, it is important to note that the existence of a face multihypergraph with hyperedges of size greater than $2$ can increase the number of possible extensions in a given degree. Moreover, this can occur even up to mutation equivalence and can produce additional, linearly dependent elements. Such a situation is highlighted in the next example.

\begin{exmp}
    The face multihypergraph appearing in Example~\ref{ex:SimpleMultiedge} is not mutation equivalent to any of the other multihypergraphs appearing in the same example.
\end{exmp}

\section{Inductive elements \texorpdfstring{$\Omega_n(G;R)$}{path chains} generating sets and bases}\label{sec:InductiveBasis}

We are now ready to define our main objects of study and prove the central results demonstrating their use.
Throughout this section, let $G$ be a digraph, $n$ a non-negative integer, $R$ a commutative ring with unit, and $\mathbb{Z}_p$ (for a prime number $p$) the finite field with $p$ elements, unless stated otherwise.

\subsection{Inductively extending bases}

In this subsection we show how to inductively define elements of $\Omega_n(G;R)$ in terms of arbitrary bases in the previous two dimensions. In the next subsection we set out the important special case in which we are primarily interested.
When considering a non-finite $G$ in this section, we note that the axiom of choice is required in the construction of bases as a subset of infinite spanning sets of a vector space. 

\begin{definition}\label{def:InductiveElementOverBases}
    Let $n\geq 1$ and suppose that $E_{n-1} \subseteq \Omega_{n-1}(G;R)$ and  $E_{n-2}\subseteq\Omega_{n-2}(G;R)$ are subsets whose elements respect the bigradings within $\Omega_{n-1}^{*,*}(G;R)$ and $\Omega_{n-2}^{*,*}(G;R)$, respectively.
    An upper (lower) connected element $x \in \Omega_n(G,R)$ is called \emph{upper (lower) $(E_{n-1},E_{n-2})$-inductive} if it can be obtained as an upper (lower) extension over a strongly connected $h(x)$-complete ($t(x)$-complete) face multihypergraph $F^{n-1}_G(x_1,\dots,x_m)$ satisfying the following conditions.
    \begin{enumerate}
        \item 
        Each $x_1,\dots,x_m$ is up to sign an element of $E_{n-1}$.
        \item 
        The elements $x_i^{u_{n-1},k}$ for $i = 1,\dots, m$, and $k=1,\dots,m_i$ which appear in the sum decomposition~\eqref{eq:n-1Faces} are elements of $E_{n-2}$ up to sign.
    \end{enumerate}
    A face multihypergraph $F^{n-1}_G(x_1,\dots,x_m)$ of the form above is called an \emph{upper (lower)\linebreak $(E_{n-1},E_{n-2})$-inductive} face multihypergraph of $x$. 
    Given an upper (lower) $(E_{n-1},E_{n-2})$-inductive element $x\in \Omega_n(G;R)$, we call the choice of an upper (lower) $(E_{n-1},E_{n-2})$-inductive face multihypergraph of $x$ 
    an \emph{upper (lower) $(E_{n-1},E_{n-2})$-inductive structure on $x$}.
\end{definition}

\begin{theorem}\label{thm:BasisExtension}
    Let $n\geq 1$, $R=\mathbb{Z}$ or $R=\mathbb{Z}_p$ for a prime $p$, $B_{n-1}$ be an $R$-basis of $\Omega_{n-1}(G;R)$, and $B_{n-2}$ an $R$-basis of $\Omega_{n-2}(G;R)$, both of which respect the bigradings within $\Omega_{n-1}^{*,*}(G;R)$ and $\Omega_{n-2}^{*,*}(G;R)$, respectively.
    Then the upper (lower) $(B_1,B_2)$-inductive elements generate $\Omega_n(G;R)$.
    Moreover,
    we have the following conditions on 
    $(B_1,B_2)$-inductive elements containing a basis.
    \begin{enumerate}
        \item
        When $R=\mathbb{Z}_p$,
        a choice of $\mathbb{Z}_p$-basis always exists as a subset of $(B_{n-1},B_{n-1})$-inductive elements.
        \item
        Let $K$ be a field of characteristic $0$.
        Then a subset of the image of $(B_{n-1},B_{n-1})$-inductive elements under
        \[
            \Omega_{n}(G;\mathbb{Z}) \hookrightarrow \Omega_{n}(G;\mathbb{Z}) \otimes K 
            \xrightarrow{\mu_n}
            \Omega_{n}(G;K),
        \]
        where $\mu_n$ is the isomorphism from Lemma~\ref{lem:CoeffChange}, form a basis of $\Omega_{n}(G;K)$.
    \end{enumerate}
\end{theorem}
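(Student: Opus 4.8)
I will treat the upper case; the lower case is dual, replacing $\delta^h$, head sets and upper extensions by $\delta^t$, tail sets and lower extensions. By the bigrading decomposition~\eqref{eq:Bigrading} it suffices to handle a connected $x\in\Omega_n^{v_t,v_h}(G;R)$, since a bigrading‑respecting basis of $\Omega_{n-1}(G;R)$ (resp.\ $\Omega_{n-2}(G;R)$) restricts to a basis of each summand. The first step is a \emph{de‑extension}: set $y:=\delta^h_n(x)=\sum_{v\in V_G}\delta^h_{n,v}(x)\in\Omega_{n-1}(G;R)$. Because every path of $x$ ends in $v_h$, unwinding the definitions gives $[y]^{v_h}=x$, and each $\delta^h_{n,v}(x)$ is a connected element of $\Omega_{n-1}^{v_t,v}(G;R)$ (nonzero only if $(v,v_h)\in E_G$). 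Expanding each $\delta^h_{n,v}(x)$ in the part of $B_{n-1}$ lying in $\Omega_{n-1}^{v_t,v}(G;R)$ produces $y=x_1+\cdots+x_m$ with every $x_i$ equal, up to sign, to a basis vector (repetitions allowed; no $x_i=-x_j$, as each basis vector occurs with a single sign). Similarly, for each $i$ and each $u$ with $\delta^h_{n-1,u}(x_i)\neq0$, expanding $\delta^h_{n-1,u}(x_i)$ in $B_{n-2}\cap\Omega_{n-2}^{v_t,u}(G;R)$ yields pieces $x^{u,1}_i,\dots,x^{u,m^u_i}_i$, each $\pm$ a member of $B_{n-2}$; since distinct basis vectors are independent and a fixed expansion uses one sign per vector (with all multiplicities $<p$ when $R=\mathbb{Z}_p$), no nonempty subsequence of these pieces sums to zero.

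\textbf{The cancellation at the top slot.} The conceptual core is the claim: if $(u,v_h)\notin E_G$ and $u\neq v_h$, then $\delta^h_{n-1,u}(y)=0$. Indeed $x\in\Omega_n(G;R)$ gives $\partial^M_{n,n,n-1}(x)=0$ by Lemma~\ref{lem:NoPositionSwap}. For a path $e_{w_0,\dots,w_{n-1},v_h}$ of $x$ the corresponding summand of $\partial^M_{n,n,n-1}$ is $\langle w_0,\dots,w_{n-2},v_h\rangle$ exactly when $d_G(w_{n-2},v_h)=2$, i.e.\ when $(w_{n-2},v_h)\notin E_G$ and $w_{n-2}\neq v_h$, and vanishes otherwise. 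Sorting these tuples by their prefix $(w_0,\dots,w_{n-2})$ and using that distinct such tuples are distinct basis elements of the free module $C^M_{n-1,n}(G;R)$, the equation $\partial^M_{n,n,n-1}(x)=0$ forces $\sum_{w_{n-1}}\alpha_{w_0,\dots,w_{n-1},v_h}=0$ for every prefix with $(w_{n-2},v_h)\notin E_G$ and $w_{n-2}\neq v_h$. These inner sums are precisely the coefficients of $\delta^h_{n-1,u}(y)$ for $u=w_{n-2}$, so $\delta^h_{n-1,u}(y)=0$ for all such $u$. Hence, for such $u$, the multiset $\{x^{u,k}_i\}_{i,k}$ sums to $0$, so by independence of $B_{n-2}$ the signed multiplicity of each basis vector among these pieces is balanced. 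I may therefore partition the $x^{u,k}_i$ into cancelling pairs $\{b',-b'\}$ — each becoming an edge $(\{x^{u,k}_i,x^{u,l}_j\},u)$ with $x^{u,k}_i=-x^{u,l}_j$ — together with, when $R=\mathbb{Z}_p$, blocks of $p$ equal pieces, each becoming a hyperedge; adding no edges for $u$ with $(u,v_h)\in E_G$ or $u=v_h$ produces a face multihypergraph $F=F^{n-1}_G(x_1,\dots,x_m)$ that is $v_h$‑proper and, by the cancellation, $v_h$‑complete. Proposition~\ref{prop:GraphExtentsion} then re‑confirms $x=[x_1+\cdots+x_m]^{v_h}\in\Omega_n(G;R)$.

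\textbf{Strong connectedness and generation.} If $F$ is strongly connected, $x$ is an upper $(B_{n-1},B_{n-2})$‑inductive element. Otherwise some mutation $F'$ of $F$ is disconnected; mutations preserve $v_h$‑properness, $v_h$‑completeness and the value of the extension, so splitting the vertex sum of $F'$ over its connected components $F'_1,\dots,F'_r$ gives $x=x'_1+\cdots+x'_r$, where $x'_j$ is the upper extension by $v_h$ of the vertex sum of $F'_j$. Each $F'_j$ is again a face multihypergraph extendable by $v_h$, and it is $v_h$‑complete because every edge or hyperedge of $F'$ meeting a vertex of $F'_j$ lies inside $F'_j$; hence $x'_j\in\Omega_n(G;R)$ by Proposition~\ref{prop:GraphExtentsion}, is connected with the same endpoints, and is carried by a strictly smaller face multihypergraph. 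Iterating terminates at single‑vertex face multihypergraphs, which are strongly connected, so $x$ is a $\mathbb{Z}$‑ or $\mathbb{Z}_p$‑combination of upper $(B_{n-1},B_{n-2})$‑inductive elements; summing over~\eqref{eq:Bigrading} proves the generation statement.

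\textbf{Bases and the main obstacle.} Over $R=\mathbb{Z}_p$ the module $\Omega_n(G;\mathbb{Z}_p)$ is a vector space, so the spanning set of inductive elements contains a basis, giving~(1). For~(2), run the generation statement over $\mathbb{Z}$; the isomorphism $\mu_n$ of Lemma~\ref{lem:CoeffChange} identifies $\Omega_n(G;\mathbb{Z})\otimes K$ with $\Omega_n(G;K)$, so the images of the $(B_{n-1},B_{n-2})$‑inductive elements span $\Omega_n(G;K)$, and the commuting $\delta^h$‑squares of that lemma show these images remain inductive relative to the induced $K$‑bases; extracting a basis finishes~(2). The step I expect to need the most care is the combinatorial partition of the pieces $x^{u,k}_i$ into edges and hyperedges obeying Definition~\ref{def:FaceMultihyperGraph} — in particular, over $\mathbb{Z}_p$ with $p\geq5$, arranging that no hyperedge is forced onto a block of vertices all carrying a single label. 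I expect this to follow from choosing the $B_{n-2}$‑expansion independently for each vertex‑copy together with the bound that the $B_{n-1}$‑coefficients of $y$ lie in $\{0,\dots,p-1\}$, but it is the delicate point; the magnitude computation $\delta^h_{n-1,u}(y)=0$, by contrast, is short and is what makes the whole construction work.
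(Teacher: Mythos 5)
Your proof follows essentially the same route as the paper's: peel off the head vertex to produce $y=\delta^h_n(x)$ with $[y]^{v_h}=x$, expand $y$ and the top faces of its summands in $B_{n-1}$ and $B_{n-2}$ respectively, use the vanishing of a magnitude boundary to pair the $(n-2)$-dimensional pieces into edges and hyperedges yielding a $v_h$-complete face multihypergraph, and then split into strongly connected components to obtain inductive elements summing to $x$. The one place where you genuinely improve the exposition is the cancellation step: you isolate and prove the clean intermediate claim that $\delta^h_{n-1,u}(y)=0$ whenever $(u,v_h)\notin E_G$ and $u\neq v_h$, deriving it transparently from $\partial^M_{n,n,n-1}(x)=0$ and the free‐module structure of $C^M_{n-1,n}(G;R)$; the paper's analogous step is stated with an awkward mixture of indices (a fixed $v$ on the left and a sum over $v\in V_x$ on the right of its displayed equation) that your formulation avoids. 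The point you flag as delicate — that over $\mathbb{Z}_p$ ($p\geq5$) a hyperedge might be forced onto $p$ pieces from a single face‐multihypergraph vertex — is handled exactly as you suspect: since each $\delta^h_{n-1,u}(x_i)$ is expanded with coefficients in $\{1,\dots,p-1\}$, a single vertex contributes at most $p-1$ copies of any basis element, so any block of $p$ equal pieces must span at least two distinct vertices; the paper elides this point similarly, so your level of detail there matches the source. Overall, a correct proof by the paper's method, with a cleaner cancellation lemma.
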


\begin{proof}
    We prove the case of upper inductive, the proof for lower inductive being similar.
    First we prove that the $(B_{n-1},B_{n-2})$-inductive elements
    generate $\Omega_n(G;R)$ when $R=\mathbb{Z}$ or $R=\mathbb{Z}_p$.
    To this end take a basis $B_n$ of $\Omega_n(G;R)$ that respects the bigrading $\Omega_n^{*,*}(G;R)$ and
    let $x\in B_n$.
    As $B_n$ respects the bigrading $\Omega_n^{*,*}(G;R)$, $x$ is upper connected and the vertex $h(x)\in V_G$ is well defined. 
    Our aim is to construct an upper connected $h(x)$-complete $(B_{n-1},B_{n-2})$-inductive face multihypergraph $F^{n-1}_G(x_1,\dots,x_m)$
    such that $x$ is the upper extension by $h(x)$ over $F^{n-1}_G(x_1,\dots,x_m)$ and so
    \[
        x = [x_1+\cdots+x_m]^{h(x)}.
    \]
    
    For any $v\in V$, we have that $\delta_{n,v}^h(x) \in \Omega_{n-1}(G;R)$ by Lemma~\ref{lem:SumFace}. Assuming $\delta_{n,v}^h(x)\neq 0$, then as $B_{n-1}$ is an $R$-basis, there is a smallest integer $t_v >0$ with a unique up to reordering sum decomposition
    \begin{equation}\label{eq:n-1MinimalDecompostion}
        \delta_{n,v}^h(x) = x_1^v+\cdots+x_{t_v}^v
    \end{equation}
    where either $x_k^v \in B_{n-1}$ or $-x_k^v \in B_{n-1}$ for $k=1,\dots,t_v$. Here it is important to realise that we allow for repetition of the $x_i^v$'s and expand
    \[
    k\cdot y=\begin{cases}y+y+\cdots+y\; (\text{$k$ times})&k\geq0\\
    -y-y-\cdots-y\; (\text{$-k$ times})&k<0\end{cases}
    \]
    for a (mod $p$) integer $k$.
    
    In any case, we may take the multiset $x_1,\dots,x_m$ to be an enumeration of the elements in the finite multiset
    \begin{align*}
        \{ x_k^v \: | \: &
        v\in V_G, \:
        \delta_{n,v}^h(x)\neq 0, \:
        \text{and} \:
        \text{$x_k^v$ appears in decomposition~\eqref{eq:n-1MinimalDecompostion} for} \: k=1,\dots,t_v  \}
    \end{align*}
    Note that we cannot have $x_i = -x_j$ for any $i,j=1,\dots,m$, as this would imply that paths in $[x_i]^{h(x)}$ and $[x_j]^{h(x)}$ cancel, which is impossible as $x \in B_n \subseteq \Omega_n(G;R)$.
    
    Now, for any $v\in V$ and $i=1,\dots,m$, we have that $\delta_{n,v}^h(x_i) \in \Omega_{n-2}(G;R)$ by Lemma~\ref{lem:SumFace}.
    Suppose that $i= 1,\dots,m$, $\delta_{n,v}^h(x_i) \neq 0$, $(v,h(x_i)) \notin E_G$ and $v \neq h(x_i)$.
    Then, similarly to equation~\eqref{eq:n-1MinimalDecompostion},
    there is a smallest integer $t_{v}^i >0$ with a unique up to reordering sum decomposition
    \begin{equation}\label{eq:n-2MinimalDecompostion}
        \delta^h_{n,v}(x_i) = x_1^{v,i}+\cdots+x_{t_v^i}^{v,i}
    \end{equation}
    where either $x_k^{v,i} \in B_{n-2}$ or $-x_k^{v,i} \in B_{n-2}$ for $k=1,\dots,t_v^i$.
    Again, as previously
    \begin{equation}\label{eq:n-2Idependence}
        x_k^{v,i} \neq -x_{k'}^{v,i}
    \end{equation}
    for any $k,k'=1,\dots,t_v^i$.
    Moreover, no sub-sequence of $x_1^{v,i},\dots,x_{t_v^i}^{v,i}$ can sum to zero as
    equation~\eqref{eq:n-2Idependence} is obtained as a refinement of a unique linear combination in the members of the $R$-basis $B_{n-2}$.
    
    Let
    \[
        V_x =
        \{ v \in V_G \: | \:
        \delta_{n,v}^h(x_i) \neq 0, \:
        (v,h(x))\notin E_G \:
        \text{and} \:
        v \neq h(x) \:
        \text{for some} \: i = 1,\dots,m \}.
    \]
    As $x \in B_n \subseteq \Omega_n(G;R)$, by Lemma~\ref{lem:NoPositionSwap}, $\partial^M_{n,n,j}(x)=0$ for each $j=1,\dots,n-1$ and hence $\partial^M_{n-1,n-1,j'}\delta^h_{n,v}(x) = 0$ for each $v\in V_G$, and $j'=1,\dots,n-2$.
    Furthermore,
    \[
        0
        =
        \partial_{n-1,n-1}^M\delta^h_{n,v}(x) 
        =
        \sum_{v \in V_x} \sum_{k=1}^{t_v^i} x_k^{v,i}.
    \]
    
    Hence when $R=\mathbb{Z}$ or $R=\mathbb{Z}_2$, using the fact that each $x_k^{v,i}$ is up to sign an element of $B_{n-2}$ and that any additive torsion in $R$ of order $2$, any $x_k^{v,i}$ can be paired with another $x_{k'}^{v,i'}$ such that
    \begin{equation}\label{eq:OpositSignPairing}
        x_k^{v,i} = -x_{k'}^{v,i'}
    \end{equation}
    for some $i'=1,\dots,m$ and $k'=1,\dots,t_v^{i'}$.
    In addition, as a consequence of equation~\eqref{eq:n-2Idependence}, it must be the case that $i\neq i'$.
    
   When $R=\mathbb{Z}_p$ for $p\geq 3$, not all $x_k^{v,i}$ can necessarily be paired in the form of equation~\eqref{eq:OpositSignPairing}, and there may be additional matchings of the form
    \begin{equation*}
        x_k^{v,i} = x_{k_1}^{v,i_1} = \cdots = x_{k_{p-1}}^{v,i_{p-1}}
        \;\;\;\text{such that} \;\;\;
        x_k^{v,i} + x_{k_1}^{v,i_1} + \cdots + x_{k_{p-1}}^{v,i_{p-1}} = 0
    \end{equation*}
    for some $i_j \in \{1,\dots,m \}$ and $k_j\in \{1,\dots,t_v^{i_j}\}$.
    In addition, as no sub-sequence of $x_1^{v,i},\dots,x_{t_v^i}^{v,i}$ from equation~\eqref{eq:n-2MinimalDecompostion} can sum to zero, it must be the case that $k\neq k_j$ for some $j=1,\dots,p-1$.
    
    We may now construct a face multihypergraph $F^{n-1}_G(x_1,\dots,x_m)$ on vertices $x_1,\dots,x_m$ with all edges and hyperedges described by the labels
    \[
        (\{ x_k^{v,i}, x_{k'}^{v,i'} \}, v)
        \;\;\ \text{and} \;\;\;
        (\{  x_k^{v,i}, x_{k_1}^{v,i_1}, \dots, x_{k_{p-1}}^{v,i_{p-1}} \}, v)
    \]
    using the pairings and matchings obtained above.
    The face multihypergraph $F^{n-1}_G(x_1,\dots,x_m)$ is $h(x)$-complete as we have constructed all required hyperedges for vertices in $V_x$.
    However, the face multihypergraph $F_G^{n-1}(x_1,\dots,x_m)$ need not be strongly connected.
    Nevertheless, a minimal subdivision of $F_G^{n-1}(x_1,\dots,x_m)$ into strongly connected face multihypergraph provides $(B_{n-1},B_{n-2})$-inductive elements whose sum is $x$.
    Therefore, every basis element in $B_n$ can be written as a linear combination of $(B_{n-1},B_{n-2})$-inductive elements and so $(B_{n-1},B_{n-2})$-inductive elements generate $\Omega_n(G;R)$ as required.

    For part (1), when $R = \mathbb{Z}_p$, a generating set is a spanning set of a vectors space and can always be reduced to a basis.
    Finally, part (2) follows from the fact that $(B_{n-1},B_{n-2})$-inductive elements generate $\Omega_n(G;\mathbb{Z})$
    combined with Lemma~\ref{lem:CoeffChange} and again that a spanning sets of a vector space always reduce to a basis.
    \qed
\end{proof}

\subsection{Inductive elements}

In this subsection we describe an inductive method for constructing generating sets of the path chain modules $\Omega_n(G;R)$. The heavy lifting here is done by Theorem~\ref{thm:BasisExtension}. The main theorems included in the introduction are stated with greater generality in this subsection and proved in full.

\begin{definition}\label{def:InductiveElement}
    Define sets $\bar{E}_{n}^h,\bar{E}_{n}^t \subseteq \Omega_n(G;R)$ for $n\geq-1$ by means of the following inductive construction.
    \begin{enumerate}
        \item 
        Let $\bar{E}^h_{-1} = \bar{E}^t_{-1} = \emptyset$
        and $\bar{E}^h_0 = \bar{E}^t_0 = \{\pm e_v \: | \: v \in V_G \}$.
        \item 
        For $n\geq 1$, the set $\bar{E}^h_n$ ($\bar{E}^t_n$) consists of all upper (lower) $(\bar{E}^h_{n-1},\bar{E}^h_{n-2})$-inductive elements ($(\bar{E}^t_{n-1},\bar{E}^t_{n-2})$-inductive elements).
    \end{enumerate}
    An element belonging to $\bar{E}_n^h$ ($\bar{E}^t_n$) for some $n\geq 0$ is called an
    \emph{upper (lower) inductive element}.
    The members of $\bar{E}^h_n$ ($\bar{E}^t_n$) for a fixed $n$ are called \emph{$n$-dimensional upper (lower)
    inductive elements}.
\end{definition}
Note that, by construction, all upper and lower inductive elements are connected. In particular, $h(x)$ and $t(x)$ are well defined for any upper or lower inductive element $x$. 

Denote the submodule of $\Omega_n(G;R)$ generated by $\bar{E}^h_n$ ($\bar{E}^t_n$) as
\[
    \Omega_n^{I,h}(G;R) \subseteq \Omega_n(G;R)
    \;\;\;
    \left( \Omega_n^{I,t}(G;R) \subseteq \Omega_n(G;R) \right).
\]
An $(\bar{E}^h_{n-1},\bar{E}^h_{n-2})$-face multihypergraph ($(\bar{E}^t_{n-1},\bar{E}^t_{n-2})$-face multihypergraph) is called an \emph{upper (lower) inductive face multihypergraph}. 
Given an upper (lower) inductive $x\in \Omega_n(G;R)$, we call a choice of upper (lower) inductive face multihypergraph for which $x$ is an upper (lower) extension by $h(x)$ ($t(x)$) an \emph{upper (lower) inductive structure on $x$}. For all this terminology, to refer to either the upper or lower inductive cases, we simply write \emph{inductive} rather than upper inductive or lower inductive.

Theorem~\ref{thm:BasisExtension} now has the following immediate consequence when coefficients are taken in a finite field.

\begin{corollary}\label{cor:FieldGeneratorBasis}
    Let $G$ be any digraph and $n$ a non-negative integer.
    Then the $n$-dimensional
    inductive elements generate $\Omega_n(G;\mathbb{Z}_p)$. That is,
    \[
        \Omega_n^{I,h}(G;\mathbb{Z}_p) = \Omega_n^{I,t}(G;\mathbb{Z}_p) = \Omega_n(G;\mathbb{Z}_p).
    \]
    Moreover, the $n$-dimensional
    inductive elements contain a subset that is a basis of $\Omega_n(G;\mathbb{Z}_p)$.
\end{corollary}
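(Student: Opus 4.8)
The plan is to prove the statement by induction on $n$, with Theorem~\ref{thm:BasisExtension} doing all the real work. I would carry along the slightly strengthened inductive hypothesis that, for every $k$ with $0\le k<n$, the set $\bar E^h_k$ (respectively $\bar E^t_k$) spans $\Omega_k(G;\mathbb{Z}_p)$ and \emph{contains} a $\mathbb{Z}_p$-basis of $\Omega_k(G;\mathbb{Z}_p)$ that respects the bigrading $\Omega_*^{*,*}(G;\mathbb{Z}_p)$ of~\eqref{eq:Bigrading}. Propagating the bigrading-compatibility along the induction is exactly what keeps Theorem~\ref{thm:BasisExtension} applicable at the next stage, since that theorem consumes bigrading-respecting input bases.

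For the base of the induction I would treat $k=0$ and $k=1$ by hand. For $k=0$ one has $\bar E^h_0=\bar E^t_0=\{\pm e_v\mid v\in V_G\}$, which contains the canonical basis $\{e_v\mid v\in V_G\}$ of $\Omega_0(G;\mathbb{Z}_p)$; since $e_v$ lies in the summand $\Omega_0^{v,v}(G;\mathbb{Z}_p)$, this basis respects the bigrading. For $k=1$, Example~\ref{exam:VerticesAndLines} shows that each edge $e_{u,v}$ with $(u,v)\in E_G$ is both an upper and a lower inductive element, and $\{e_{u,v}\mid (u,v)\in E_G\}$ is a bigrading-respecting basis of $\Omega_1(G;\mathbb{Z}_p)$.

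For the inductive step, fix $n\ge2$ and use the hypothesis to choose, up to sign, bigrading-respecting bases $B_{n-1}\subseteq\bar E^h_{n-1}$ and $B_{n-2}\subseteq\bar E^h_{n-2}$; here one checks the harmless point that each $\bar E^h_k$ is closed under negation, so that ``up to sign'' membership is unambiguous. Applying Theorem~\ref{thm:BasisExtension} with $R=\mathbb{Z}_p$ and these bases, the $(B_{n-1},B_{n-2})$-inductive elements generate $\Omega_n(G;\mathbb{Z}_p)$. By Definition~\ref{def:InductiveElement} every such element is an $(\bar E^h_{n-1},\bar E^h_{n-2})$-inductive element, hence belongs to $\bar E^h_n$; therefore $\bar E^h_n$ spans $\Omega_n(G;\mathbb{Z}_p)$, i.e. $\Omega_n^{I,h}(G;\mathbb{Z}_p)=\Omega_n(G;\mathbb{Z}_p)$, and the lower case is identical, yielding the displayed chain of equalities. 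To extract a bigrading-respecting basis contained in $\bar E^h_n$, I would use that every inductive element is connected, as noted immediately after Definition~\ref{def:InductiveElement}, and hence homogeneous for the bigrading; thus $\bar E^h_n$ meets each summand $\Omega_n^{v_t,v_h}(G;\mathbb{Z}_p)$ in a spanning subset of that summand, and the union over all pairs $(v_t,v_h)$ of a chosen basis of each summand is a bigrading-respecting $\mathbb{Z}_p$-basis of $\Omega_n(G;\mathbb{Z}_p)$ lying in $\bar E^h_n$. Alternatively, part (1) of Theorem~\ref{thm:BasisExtension} already produces a basis among the $(B_{n-1},B_{n-1})$-inductive elements, which also lie in $\bar E^h_n$; one still invokes the connectedness remark to refine it to a bigrading-respecting basis. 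This closes the induction and proves the corollary.

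The only genuine subtlety — and the single place I would slow down — is the bookkeeping that keeps the bigrading-compatibility of the chosen bases alive through the induction, since Theorem~\ref{thm:BasisExtension} requires bigrading-respecting bases as input and I want its output to again be refinable to one; connectedness of inductive elements is the fact that makes this automatic. Everything else is a direct unwinding of the definitions together with the observation that a spanning set of a $\mathbb{Z}_p$-vector space reduces to a basis.
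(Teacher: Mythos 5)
Your proof is correct and follows the same route as the paper's: the base cases $n=0,1$ come from Example~\ref{exam:VerticesAndLines} and the inductive step is exactly an application of Theorem~\ref{thm:BasisExtension} with bases $B_{n-1},B_{n-2}$ drawn from $\bar{E}^h_{n-1},\bar{E}^h_{n-2}$ (resp.\ $\bar{E}^t$). The only difference is expository: you make explicit the bookkeeping (bigrading-compatibility of the chosen bases, propagated via connectedness of inductive elements, and closure of $\bar{E}^h_k$ under negation) that the paper compresses into a one-sentence appeal to induction.
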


\begin{proof}
    The corollary in the cases $n=0,1$ follow from Example~\ref{exam:VerticesAndLines}. 
    The full statement follows by induction on $n$ using Theorem~\ref{thm:BasisExtension}.
    \qed
\end{proof}

Recall that when $R$ has no additive torsion other than $2$, face multihypergraphs are face multigraphs.
In particular, when $R=\mathbb{Z}$ or $R=\mathbb{Z}_2$,
inductive elements are obtained by extending over
inductive face multigraphs only.
Theorem~\ref{thm:BasisExtension} also provides the next corollary, which corresponds to Corollary~\ref{cor:FieldGeneratorBasis} in the integral and field of characteristic zero cases.

\begin{corollary}\label{cor:IductiveBasisIntegral}
    Let $G$ be any digraph and $K$ a field of characteristic $0$.
    Then the following statements hold.
    \begin{enumerate}
        \item 
        Inductive elements contain a basis of $\Omega_i(G;\mathbb{Z})$ and $\Omega_i(G;K)$ for $i=0,1,2$.
        \item
        The $3$-dimensional
        inductive elements generate $\Omega_3(G;\mathbb{Z})$.
        \item
        A subset of the $3$-dimensional
        inductive elements forms a basis of $\Omega_3(G;K)$.
    \end{enumerate}
\end{corollary}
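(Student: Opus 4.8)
The plan is to deduce Corollary~\ref{cor:IductiveBasisIntegral} directly from Theorem~\ref{thm:BasisExtension} together with the low-dimensional basis descriptions recalled in Section~\ref{sec:LowDimBasis}, proceeding by an induction that bootstraps the inductive construction up through dimension $3$. The key point is that the inductive sets $\bar E^h_n$ (and $\bar E^t_n$) from Definition~\ref{def:InductiveElement} are built by applying the $(E_{n-1},E_{n-2})$-inductive construction to the previously constructed sets, so Theorem~\ref{thm:BasisExtension} applies at each stage provided one has genuine \emph{bases} (respecting the bigrading) in the two preceding dimensions — and this is exactly what needs to be tracked as an induction hypothesis.

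First I would handle the base cases $i=0,1$: by Example~\ref{exam:VerticesAndLines} the $0$- and $1$-dimensional inductive elements are (up to sign) precisely the vertex basis $\{e_v\}$ of $\Omega_0(G;R)$ and the edge basis $\{e_{u,v}\}$ of $\Omega_1(G;R)$, and both of these bases respect the bigrading $\Omega_*^{*,*}(G;R)$ (a vertex sits in a single bidegree, and an edge $u\to v$ sits in bidegree $(u,v)$). So inductive elements contain bases of $\Omega_0$ and $\Omega_1$ over both $\mathbb Z$ and $K$. Next, for $i=2$: apply Theorem~\ref{thm:BasisExtension} with $n=2$, $R=\mathbb Z$, $B_1$ the edge basis and $B_0$ the vertex basis. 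The theorem says the $(B_1,B_0)$-inductive elements generate $\Omega_2(G;\mathbb Z)$, and over $\mathbb Z$ these are $2$-dimensional inductive elements (since $\mathbb Z$ has no odd torsion, face multihypergraphs are face multigraphs). I would then invoke Proposition~\ref{prop:Dim2Base} and Examples~\ref{exam:DirectedSquare}, \ref{ex:SimpleMultiedge}, \ref{exam:Dimension3}, which show that double edges, directed triangles, and directed squares — the natural generators of $\Omega_2$ — are all realised as inductive elements over face multigraphs consisting of a single vertex or a single edge, and that a bigrading-respecting basis can be chosen from among them; hence inductive elements contain a basis of $\Omega_2(G;\mathbb Z)$. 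By Lemma~\ref{lem:CoeffChange}, tensoring with $K$ and applying $\mu_2$ carries this $\mathbb Z$-basis to a $K$-basis of $\Omega_2(G;K)$ consisting of (images of) inductive elements, giving part (1).

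For part (2), I would apply Theorem~\ref{thm:BasisExtension} once more, now with $n=3$, $R=\mathbb Z$, taking $B_2$ to be the bigrading-respecting $\mathbb Z$-basis of $\Omega_2(G;\mathbb Z)$ produced above and $B_1$ the edge basis of $\Omega_1(G;\mathbb Z)$. The theorem yields that the $(B_2,B_1)$-inductive elements generate $\Omega_3(G;\mathbb Z)$; over $\mathbb Z$ these are precisely $3$-dimensional inductive elements in the sense of Definition~\ref{def:InductiveElement}, so the $3$-dimensional inductive elements generate $\Omega_3(G;\mathbb Z)$. Here one must be slightly careful that the chosen $B_2$ is, up to sign, contained in $\bar E^h_2$ (resp.\ $\bar E^t_2$) so that the $(B_2,B_1)$-inductive construction genuinely lands in $\bar E^h_3$; this is the content of the $i=2$ discussion above, where the single-vertex/single-edge inductive structures exhibit each basis element of $B_2$ as an inductive element, and its $\delta^h$-faces are edges, hence in $\bar E^h_1$. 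For part (3), I would combine part (2) with Theorem~\ref{thm:BasisExtension}(2): the image of the $3$-dimensional inductive elements under $\Omega_3(G;\mathbb Z)\hookrightarrow\Omega_3(G;\mathbb Z)\otimes K\xrightarrow{\mu_3}\Omega_3(G;K)$ spans $\Omega_3(G;K)$, and any spanning set of a $K$-vector space reduces to a basis; since $\mu_3$ is a module isomorphism by Lemma~\ref{lem:CoeffChange} and sends inductive elements to inductive elements (the $\delta^h,\delta^t$ commute with $\mu$), a subset of the $3$-dimensional inductive elements over $K$ forms a basis of $\Omega_3(G;K)$.

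The main obstacle I anticipate is the bookkeeping in the induction hypothesis: Theorem~\ref{thm:BasisExtension} requires \emph{bigrading-respecting bases} in the two preceding dimensions, and one must verify at each step both that such a basis exists among inductive elements and that the specific basis elements used are themselves inductive (so the recursion in Definition~\ref{def:InductiveElement} closes up correctly). The dimension-$2$ step is where the real work sits, because $\Omega_2$ has no canonical basis in the presence of multisquares; the argument must point to Proposition~\ref{prop:Dim2Base} for the existence of a bigrading-respecting basis of squares/triangles/double-edges and to the explicit Examples~\ref{exam:DirectedSquare}--\ref{exam:Dimension3} for the fact that each such generator carries an inductive structure over a strongly connected complete face multigraph. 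Once dimension $2$ is in hand, dimensions $0$, $1$ feed in for free and dimension $3$ is a single clean application of Theorem~\ref{thm:BasisExtension}, with the characteristic-zero statements following formally from Lemma~\ref{lem:CoeffChange}.
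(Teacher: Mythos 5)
Your proof is correct and follows essentially the same route as the paper's: handle $i=0,1$ via Example~\ref{exam:VerticesAndLines}, obtain dimension $2$ by applying Theorem~\ref{thm:BasisExtension} with the vertex and edge bases and selecting a bigrading-respecting basis $B_2$ of squares/triangles/double-edges via Proposition~\ref{prop:Dim2Base}, then apply Theorem~\ref{thm:BasisExtension} once more for dimension $3$, with Lemma~\ref{lem:CoeffChange} handling the characteristic-$0$ case. The extra care you take to verify that the chosen $B_2$ elements are themselves inductive (so the recursion in Definition~\ref{def:InductiveElement} closes) is exactly the point the paper compresses into the sentence citing Example~\ref{exam:DirectedSquare}.
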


\begin{proof}
    We give the proof in the upper inductive case, the lower inductive case being almost identical.
    The corollary in the cases $n=0,1$ follows from Example~\ref{exam:VerticesAndLines} and the determination of bases in Section~\ref{sec:LowDimBasis}.
    In particular, $\bar{E}_0^h$ and $\bar{E}_1^h$ are bases of vertices $e_u$ and edges $e_{u,v}$ respectively for $u,v\in V_G$ and $u\neq v$.
    To obtain the statement of the corollary when $n=2$, we apply Theorem~\ref{thm:BasisExtension} and the result of Section~\ref{sec:LowDimBasis} with the bases $B_{0} = \bar{E}_0^h$ and $B_{1} = \bar{E}_1^h$.
    Together with Example~\ref{exam:DirectedSquare}, this implies that double edges, directed triangles and directed squares are the inductive elements in dimension $2$.
    Hence, using Proposition~\ref{prop:Dim2Base}, we choose a subset of these inductive elements that form a basis $B_2$. Finally, apply again Theorem~\ref{thm:BasisExtension} to show that inductive elements generate $\Omega_3(G;\mathbb{Z})$, and a subset of these forms a basis of $\Omega_3(G;K)$.
    \qed
\end{proof}

Corollaries~\ref{cor:FieldGeneratorBasis}~and~\ref{cor:IductiveBasisIntegral} can together be considered a generalisation of Theorem~\ref{thm:Dim3BasisNoDoubleNoMulti}, 
in the sense that a $\Omega_3(G;R)$ generating set is constructed explicitly without either of the restrictions on double edges or multisquares.
Moreover, using Example~\ref{exam:2Trapezohedron}, it is straightforward to see that trapezohedra of any order are examples of inductive elements.

In the case of integral coefficients, it is not generally true that a generating set of a finite-dimensional free $\mathbb{Z}$-module may be reduced to a basis.
This fact is the only obstruction to further extending Corollary~\ref{cor:IductiveBasisIntegral} to higher dimensions.
However, we are not currently aware of any examples where a subset of inductive elements do not form a basis.
In general, a basis can still be determined from a generating set of $\Omega_n(G;\mathbb{Z})$ by the computation of a Hermite normal form with respect to the intersection with the usual basis of $\mathcal{A}_n(G;\mathbb{Z})$.
In particular, when the number of paths in the generating set $\Omega_n(G;\mathbb{Z})$ is significantly smaller than the rank of $\mathcal{A}_n(G;\mathbb{Z})$, this approach would be more computationally efficient than computing $\Omega_n(G;\mathbb{Z})$ directly as a submodule of $\mathcal{A}_n(G;\mathbb{Z})$.

\section{Important examples}

The examples here are derived and justified using the theory developed in Sections~\ref{sec:StructureMaps},~\ref{sec:Extensions}~and~\ref{sec:InductiveBasis}.
However, once the digraphs have been constructed, 
their properties can be checked directly by computer without reference to the theory developed in this paper.
For example, an algorithm for path homology with real coefficients, including computation of boundary matrices, is publicly available at \cite{Carranza2022}, released accompanying the paper \cite{Carranza2024}. 
Similarly, the details of a straightforward algorithm for path homology with field coefficients can be found in \cite[\S 1.7]{Grigoryan2022}.

Accompanying this work, we provide \cite{Burfitt2024} an implementation
for computing path homology, path homology boundary matrices, and bases of the path chain complex of a digraph with respect to $\mathbb{Q}$ and $\mathbb{Z}_p$ coefficients.

Throughout this section assume that $n$ is a non-negative integer, $G$ is a digraph, $\mathbb{Z}_p$ (for a prime number $p$) is the finite field with $p$ elements, $K$ a field, and $R$ a commutative ring with a unit, unless otherwise stated.

\subsection{Boundary matrix multiplicities}\label{sec:DiffMultiplicityCounter}

The next example, demonstrates that the path homology differential with respect to an inductive basis can contain arbitrary multiplicities in its boundary matrix.

\begin{exmp}\label{exam:ArbitraryMultiplicities}\normalfont
    Let $t\geq 2$ be an integer.
    Throughout this example, all index values are assumed to be integers modulo $2t$.
    We construct a digraph $\mathbb{M}_t$ with vertices
    \[
        V_{\mathbb{M}_t} = 
        \{
        T,
        u^A_1,u^A_2,u^{B},
        v_1^A,v_2^A,v_1^{B},\dots,v_{2t}^{B},
        w^A,w^{B}_1,\dots,w^{B}_{2t},
        H
        \}
    \]
    and edges
    \begin{align*}
        &
        T \to u_1^A, \:
        T \to u_2^A, \:
        T \to u^{B}, 
        \\ &
        u_1^A \to v_1^A, \:
        u_1^A \to v_2^A, \:
        u_2^A \to v_1^A, \:
        u_2^A \to v_2^A, \:
        u^{B} \to v_i^{B}, \:
        u_1^A \to v_{2i+1}^{B}, \:
        u_2^A \to v_{2i}^{B},
        \\ &
        v_1^A \to w^A, \:
        v_2^A \to w^A, \:
        v_i^{B} \to w_i^{B}, \:
        v_i^{B} \to w_{i+1}^{B}, \:
        v_1^A \to w_{2i}^{B}, \:
        v_2^A \to w_{2i+1}^{B},
        \\ &
        w^A \to H, \:
        w^{B}_i \to H
    \end{align*}
    for $i=1,\dots,2t$.
    In the case $t=2$, we obtain the following digraph $\mathbb{M}_2$.
    \begin{center}
        \tikz {
            \node (H) at (2,7) {$H$};
            \node (T) at (3,1) {$T$};
            \node (xA) at (5,2) {$u_1^A$};
            \node (x'A) at (7,2) {$u_2^A$};
            \node (x) at (0,2) {$u^{B}$};
            \node (yA) at (5,4) {$v_1^A$};
            \node (y'A) at (7,4) {$v_2^A$};
            \node (yBC) at (-3,4) {$v^{B}_1$};
            \node (yCD) at (-1,4) {$v^{B}_2$};
            \node (yDE) at (1,4) {$v^{B}_3$};
            \node (yBE) at (3,4) {$v^{B}_4$};
            \node (hA) at (6,6) {$w^A$};
            \node (hB) at (-3,6) {$w^{B}_1$};
            \node (hC) at (-1,6) {$w^{B}_2$};
            \node (hD) at (1,6) {$w^{B}_3$};
            \node (hE) at (3,6) {$w^{B}_4$};
            \draw[->] (T) -- (xA);
            \draw[->] (T) -- (x'A);
            \draw[->] (T) -- (x);
            \draw[->] (x) -- (yBC);
            \draw[->] (x) -- (yCD);
            \draw[->] (x) -- (yDE);
            \draw[->] (x) -- (yBE);
            \draw[->] (xA) -- (yA);
            \draw[->] (xA) -- (y'A);
            \draw[->] (xA) -- (yCD);
            \draw[->] (xA) -- (yBE);
            \draw[->] (x'A) -- (yA);
            \draw[->] (x'A) -- (y'A);
            \draw[->] (x'A) -- (yBC);
            \draw[->] (x'A) -- (yDE);
            \draw[->] (yA) -- (hA);
            \draw[->] (yA) -- (hB);
            \draw[->] (yA) -- (hD);
            \draw[->] (y'A) -- (hA);
            \draw[->] (y'A) -- (hC);
            \draw[->] (y'A) -- (hE);
            \draw[->] (yBC) -- (hB);
            \draw[->] (yBC) -- (hC);
            \draw[->] (yCD) -- (hC);
            \draw[->] (yCD) -- (hD);
            \draw[->] (yDE) -- (hD);
            \draw[->] (yDE) -- (hE);
            \draw[->] (yBE) -- (hE);
            \draw[->] (yBE) -- (hB);
            \draw[->] (hA) -- (H);
            \draw[->] (hB) -- (H);
            \draw[->] (hC) -- (H);
            \draw[->] (hD) -- (H);
            \draw[->] (hE) -- (H);
        }
    \end{center}
    All maximal length paths in $\mathbb{M}_{t}$ have length $4$ and $\mathbb{M}_{t}$ contain no directed triangles. It follows that $\Omega_n(\mathbb{M}_{t};\mathbb{Z})=0$ for $n\geq5$.
    
    We now construct an element $I^t_4$ of $\Omega_4(\mathbb{M}_{t};\mathbb{Z})$ as an upper inductive element by identifying the unique (up to mutation equivalence) upper inductive structure it lies over.
    Using this construction, we show that $\partial_4^PI^t_4=t\cdot A+\cdots$ for a certain inductive element $A\in\Omega_3(\mathbb{M}_{t};\mathbb{Z})$. Although the example will be presented with integral coefficients, the conclusions are equally valid for coefficients in a field of characteristic $0$ or $p$, for a prime $p>t$.
    
    First we note that, if the vertex $H$ and all its incoming edges are removed from $\mathbb{M}_{t}$, the remaining subdigraph is the union of the digraphs
    \begin{center}
        \tikz {
            \node (T) at (0,0) {$T$};
            \node (xA) at (-0.75,1) {$u_1^A$};
            \node (x-A) at (0.75,1) {$u_2^A$};
            \node (yA) at (-0.75,2.5) {$v_1^A$};
            \node (y-A) at (0.75,2.5) {$v_2^A$};
            \node (zA) at (0,3.5) {$w^A$};
            \draw[->] (T) -- (xA);
            \draw[->] (T) -- (x-A);
            \draw[->] (xA) -- (yA);
            \draw[->] (xA) -- (y-A);
            \draw[->] (x-A) -- (yA);
            \draw[->] (x-A) -- (y-A);
            \draw[->] (yA) -- (zA);
            \draw[->] (y-A) -- (zA);
        }
        \;\;\;\;\;\;\;\;\;\;\;\;
        \tikz {
            \node (T) at (0,0) {$T$};
            \node (xA) at (-1.5,1) {$u_1^A$};
            \node (x) at (0,1) {$u^{B}$};
            \node (x-A) at (1.5,1) {$u_2^A$};
            \node (yA) at (-1.5,2.5) {$v_1^A$};
            \node (yBE) at (0,2.5) {$v^{B}_{i-1}$};
            \node (yBC) at (1.5,2.5) {$v^{B}_i$};
            \node (zB) at (0,3.5) {$w^{B}_i$};
            \draw[->] (T) -- (xA);
            \draw[->] (T) -- (x);
            \draw[->] (T) -- (x-A);
            \draw[->] (xA) -- (yA);
            \draw[->] (xA) -- (yBE);
            \draw[->] (x) -- (yBC);
            \draw[->] (x) -- (yBE);
            \draw[->] (x-A) -- (yA);
            \draw[->] (x-A) -- (yBC);
            \draw[->] (yA) -- (zB);
            \draw[->] (yBC) -- (zB);
            \draw[->] (yBE) -- (zB);
        }
        \;\;\;\;\;\;\;\;\;\;\;\;
        \tikz {
            \node (T) at (0,0) {$T$};
            \node (x-A) at (-1.5,1) {$u_2^A$};
            \node (x) at (0,1) {$u^{B}$};
            \node (xA) at (1.5,1) {$u_1^A$};
            \node (y-A) at (-1.5,2.5) {$v_2^A$};
            \node (yBC) at (0,2.5) {$v_i^{B}$};
            \node (yCD) at (1.5,2.5) {$v_{i+1}^{B}$};
            \node (zC) at (0,3.5) {$w^{B}_{i+1}$};
            \draw[->] (T) -- (xA);
            \draw[->] (T) -- (x);
            \draw[->] (T) -- (x-A);
            \draw[->] (x-A) -- (y-A);
            \draw[->] (x-A) -- (yBC);
            \draw[->] (x) -- (yCD);
            \draw[->] (x) -- (yBC);
            \draw[->] (xA) -- (yA);
            \draw[->] (xA) -- (yCD);
            \draw[->] (y-A) -- (zC);
            \draw[->] (yCD) -- (zC);
            \draw[->] (yBC) -- (zC);
        }
    \end{center}
    where $i=1,3\dots,2t-1$ is an odd integer.
    In particular, by Definition~\ref{def:Trapezohedron}, the digraphs above are trapezohedra of order $2$, $3$, and $3$, respectively.
    Each of the trapezohedron digraphs contains a unique trapezohedron element generating the dimension $3$ path chains
    \begin{align*}
        A & =
        - e_{T,u_1^A,v_1^A,w_A} +
        e_{T,u_1^A,v_2^A,w_A} -
        e_{T,u_2^A,v_2^A,w_A} +
        e_{T,u_2^A,v_1^A,w_A}
        \\
        B_i & =
        e_{T,u_1^A,v_1^A,w_i^B} -
        e_{T,u_1^A,v_{i-1}^B,w_i^B} +
        e_{T,u^B,v_{i-1}^B,w_i^B} -
        e_{T,u^B,v_i^B,w_i^B} +
        e_{T,u_2^A,v_i^B,w_i^B} -
        e_{T,u_2^A,v_1^A,w_i^B}
        \\
        B_{i+1} & =
        e_{T,u_2^A,v_2^A,w_{i+1}^B} -
        e_{T,u_2^A,v_i^B,w_{i+1}^B} +
        e_{T,u^B,v_i^B,w_{i+1}^B} -
        e_{T,u^B,v_{i+1}^B,w_{i+1}^B} +
        e_{T,u_1^A,v_{i+1}^B,w_{i+1}^B} -
        e_{T,u_1^A,v_2^A,w_{i+1}^B}
    \end{align*}
    respectively, where $i=1,3\dots,2t-1$.
    We can verify the above generators by applying Theorem~\ref{thm:BasisExtension} and realising $A$, $B_i$, $B_{i+1}$ for $i=1,3\dots,2t-1$ as upper extensions
    \[
        A = [-S^A_1-S^A_2]^{w^a}, \;\;\;
        B_i = [S^{B}_i-S^{B}_{i-1}+S^A_1]^{w_i^B}, \;\;\;
        B_{i+1} = [S^{B}_{i+1}-S^{B}_i+S^A_2]^{w^B_{i+1}},
    \]
    over the face multigraphs,
    \begin{center}
        \tikz {
            \node (S1A) at (0,0) {$-S_1^A$};
            \node (S2A) at (2,0) {$-S_2^A$};
            \draw[-] (S1A) to [out=45,in=135] node[pos=0.5,above] {$T\to u^A_1$} (S2A);
            \draw[-] (S1A) to [out=315,in=225] node[pos=0.5,below] {$T\to u^A_2$} (S2A);
        }
        \;\;\;\;\;
        \tikz {
            \node (A) at (0,0) {$S^A_1$};
            \node (BC) at (-1.25,-1.5) {$S^{B}_i$};
            \node (BE) at (1.25,-1.5) {$-S^{B}_{i-1}$};
            \draw[-] (BC) -- (BE) node[midway,below] {$T \to u^B$};
            \draw[-] (A) -- (BC) node[midway,above left] {$T \to u^A_2$};
            \draw[-] (A) -- (BE) node[midway,above right] {$T \to u^A_1$};
        }
        \;\;\;\;\;
        \tikz {
            \node (A) at (0,0) {$S^A_2$};
            \node (CD) at (-1.25,-1.5) {$S^{B}_{i+1}$};
            \node (BC) at (1.25,-1.5) {$-S^{B}_i$};
            \draw[-] (CD) -- (BC) node[midway,below] {$T \to u^B$};
            \draw[-] (A) -- (CD) node[midway,above left] {$T \to u^A_1$};
            \draw[-] (A) -- (BC) node[midway,above right] {$T \to u^A_2$};
        }
    \end{center}
    where
    \begin{align*}
        S_1^A &=
        e_{T,u^A_2,v^A_1}
        -
        e_{T,u^A_1,v^A_1}
        , \;\;\;
        S_2^A =
        e_{T,u^A_1,v^A_2}
        -
        e_{T,u^A_2,v^A_2}
        ,\\
        S^{B}_i &=
        e_{T,u^B,v^B_i}
        -
        e_{T,u^A_2,v^B_i}
        , \;\;\;
        S^{B}_{i+1} =
        e_{T,u^B,v^B_{i+1}}
        -
        e_{T,u^A_1,v^B_{i+1}}
    \end{align*}
    are a basis of directed squares whose tail vertices are $T$.
    In fact, it can also be easily checked that the above generators form a basis of
    \[
        \Omega_3^{T,w^A}(\mathbb{M}_{t};\mathbb{Z}),
        \;\;\;
        \Omega_3^{T,w^B_i}(\mathbb{M}_{t};\mathbb{Z})
        \;\;\; \text{and} \;\;\;
        \Omega_3^{T,w^B_{i+1}}(\mathbb{M}_{t};\mathbb{Z})
    \]
    respectively for $i=2,4,\dots,2t$.
    Moreover, together the above elements form a basis of $\Omega_3^{T,*}(\mathbb{M}_{t};\mathbb{Z})$.
    
    We obtain an element of $\Omega_4(\mathbb{M}_t;\mathbb{Z})$ as the upper extension
    \[
        I_4^t = [\underbrace{A+\cdots+A}_{t}+B_1+\dots+B_{2t}]^H
    \]
    which is an extension over the following strongly connected $H$-complete upper face multigraph.
    \begin{center}
        \tikz {
            \node (B1) at (0,4) {$B_1$};
            \node (B2) at (2,6) {$B_2$};
            \node (B3) at (4,6) {$B_3$};
            \node (B4) at (6,4) {$B_4$};
            \node (Bt-1) at (2,0) {$B_{2t-1}$};
            \node (Bt) at (0,2) {$B_{2t}$};
            \node (A2) at (2,4) {$A$};
            \node (A4) at (4,4) {$A$};
            \node (A2t) at (2,2) {$A$};
            \node (D1) at (4,2) {$\udots$};
            \node (D2) at (5,1) {$\udots$};
            \node (E1) at (6,3) {};
            \node (E2) at (3.25,0) {};
            \draw[-] (B1) -- (B2) node[midway,above left] {$S^B_{1}$};
            \draw[-] (B2) -- (B3) node[midway,above] {$S^B_{2}$};
            \draw[-] (B3) -- (B4) node[midway,above right] {$S^B_{3}$};
            \draw[-] (Bt-1) -- (Bt) node[midway,below left] {$S^B_{2t-1}$};
            \draw[-] (Bt) -- (B1) node[midway,left] {$S^B_{2t}$};
            \draw[-] (A2) -- (B1) node[midway,below] {$S^A_{1}$};
            \draw[-] (A2) -- (B2) node[midway,right] {$S^A_{2}$};
            \draw[-] (A4) -- (B3) node[midway,left] {$S^A_{1}$};
            \draw[-] (A4) -- (B4) node[midway,below] {$S^A_{2}$};
            \draw[-] (A2t) -- (Bt-1) node[midway,right] {$S^A_{1}$};
            \draw[-] (A2t) -- (Bt) node[midway,above] {$S^A_{2}$};
            \draw[-] (B4) -- (E1);
            \draw[-] (E2) -- (Bt-1);
        }
    \end{center}
    However, the upper face multigraph above is not unique up to mutation equivalence.  
    For example, when $t=2$ either of the following two mutation equivalent face multigraphs would suffice as an upper inductive structure.
    \begin{center}
        \tikz {
            \node (A1) at (-0.25,3) {$A$};
            \node (A2) at (0.25,2) {$A$};
            \node (B) at (-2.5,2.5) {$B_1$};
            \node (C) at (0,5) {$B_2$};
            \node (D) at (2.5,2.5) {$B_3$};
            \node (E) at (0,0) {$B_4$};
            \draw[-] (B) -- (C) node[midway,above left] {$S^B_{1}$};
            \draw[-] (C) -- (D) node[midway,above right] {$S^B_{2}$};
            \draw[-] (D) -- (E) node[midway,below right] {$S^B_{3}$};
            \draw[-] (E) -- (B) node[midway,below left] {$S^B_{4}$};
            \draw[-] (B) -- (A1) node[midway,below] {$S^A_1$};
            \draw[-] (C) -- (A1) node[midway,right] {$S^A_2$};
            \draw[-] (D) -- (A2) node[midway,above] {$S^A_1$};
            \draw[-] (E) -- (A2) node[midway,left] {$S^A_2$};
        }
        \;\;\;\;\;\;\;\;\;
        \tikz {
            \node (A1) at (-0.25,2) {$A$};
            \node (A2) at (0.25,3) {$A$};
            \node (B) at (-2.5,2.5) {$B_1$};
            \node (C) at (0,5) {$B_2$};
            \node (D) at (2.5,2.5) {$B_3$};
            \node (E) at (0,0) {$B_4$};
            \draw[-] (B) -- (C) node[midway,above left] {$S^B_{1}$};
            \draw[-] (C) -- (D) node[midway,above right] {$S^B_{2}$};
            \draw[-] (D) -- (E) node[midway,below right] {$S^B_{3}$};
            \draw[-] (E) -- (B) node[midway,below left] {$S^B_{4}$};
            \draw[-] (B) -- (A1) node[midway,above] {$S^A_1$};
            \draw[-] (E) -- (A1) node[midway,right] {$S^A_2$};
            \draw[-] (C) -- (A2) node[midway,left] {$S^A_2$};
            \draw[-] (D) -- (A2) node[midway,below] {$S^A_1$};
        }
    \end{center}
    
    Finally, due to the structure of the $\Omega_3^{T,*}(\mathbb{M}_{t};\mathbb{Z})$ basis, the inductive structure on $I_4^t$ above is the only strongly connected $H$-complete face multigraph that can be constructed on lower connected elements of $\Omega_3(\mathbb{M}_t;\mathbb{Z})$ with tail $T$ up to mutations.
    Therefore, by Theorem~\ref{thm:BasisExtension}, $I_4^t$ is the unique generator of $\Omega_4(\mathbb{M}_t;\mathbb{Z})$ up to sign.
    Furthermore, the element $I_4^t$ is constructed using an inductive structure containing $t$ copies of the element $A\in \Omega_3(\mathbb{M}_t;\mathbb{Z})$. 
    Hence, with respect to a $\Omega_*(\mathbb{M}_t;\mathbb{Z})$ basis extending the elements used to construct $I_4^t$, the image of $\delta^h_4$ consists of a vector containing an element of degree $t$. 
    Moreover, using the $\Omega_{*}^{*,*}(G;R)$ bigrading detailed in equation~\eqref{eq:Bigrading}, we see that the image of $\delta^h_4$ lies in a separate summand of $\Omega_3(\mathbb{M}_t;\mathbb{Z})$ from $\partial^P_{4}-\delta^h_4$.
    Therefore, the matrix representing $\partial^P_4$ with respect to the $\Omega_*(\mathbb{M}_t;\mathbb{Z})$ basis chosen above,
    has at least one entry of multiplicity $t$.
\end{exmp}

\subsection{Digraphs with different \texorpdfstring{$\mathbb{Z}_p$}{ finite fields}
Euler characteristics}\label{sec:DiffEulerOverFiniteFields}

The following definition is from \cite[\S 5.2]{Fu2024}.
Let $K$ be a field and let $G$ be a finite digraph such that only finitely many $H^{P}_{n}(G;K) \neq 0$ for $n\geq 0$.
In this case, the \emph{(path) Euler characteristic} of $G$ with coefficients $K$ is given by
\[
    \chi^K(G)
    = \sum_{i=0}^{\infty} (-1)^i\dim(H^{P}_{i}(G;K))
    = \sum_{i=0}^{\infty} (-1)^i\dim(\Omega_{i}(G;K)).
\]
The next example is, to our knowledge, the first known construction of a digraph $G$ such that $\chi^\mathbb{Q}(G) \neq \chi^{\mathbb{Z}_p}(G)$ when $p \geq 3$.
In particular, the example answers the second open question of Fu and Ivanov set out in the introduction of \cite{Fu2024}.

In the course of their work, Fu and Ivanov show that a digraph $G$ containing no multisquares satisfies $\chi^\mathbb{Q}(G)=\chi^{\mathbb{Z}_p}(G)$ for any odd prime $p$. In the following example, it is therefore essential that the construction of inductive elements is applicable to digraphs containing multisquares.

\begin{exmp}\label{exam:DiffEulerOverFiniteFields}\normalfont
    Let $t \geq 2$ be an integer.
    Throughout the example, all index values are assumed to be integers modulo $t$.
    We construct a digraph $\mathbb{E}_t$ with vertices
    \[
        V_{\mathbb{E}_t} = 
        \{
        T,
        u^A_1,u^A_2,
        u_1^C,\dots,u_{t}^C,
        v^A,
        v_1^{B_1},\dots,v_{t}^{B_1},
        v_1^{B_2},\dots,v_{t}^{B_2},
        v_1^C,\dots,v_{t}^C,
        w_1,\dots,w_{t},
        H
        \}
    \]
    and edges
    \begin{align*}
        &
        T \to u_1^A, \:
        T \to u_2^A, \:
        T \to u_i^C, \:
        T \to v_i^C,
        \\ &
        u_1^A \to v^A, \:
        u_2^A \to v^A, \:
        u_1^A \to v_{i}^{B_1}, \:
        u_2^A \to v_{i}^{B_2}, \:
        u_i^C \to v_i^{B_1}, \:
        u_i^C \to v_i^{B_2}, \:
        u_i^C \to v_i^C, \:
        \\ &
        v^A \to w_i, \:
        v_i^{B_1} \to w_{i+1}, \:
        v_i^{B_2} \to w_i, \:
        v_{i}^C \to w_i, \:
        v_{i}^C \to w_{i+1},
        \\ &
        v^{B_1}_i \to H, \: v^{B_2}_i \to H, w_i \to H
    \end{align*}
    for $i=1,\dots,t$.
    In the case $t=3$, we obtain the following digraph $\mathbb{E}_3$.
    \begin{center}
        \tikz {
            \node (H) at (-3,6) {$H$};
            \node (T) at (-2,0) {$T$};
            \node (uA1) at (1,1) {$u_1^A$};
            \node (uA2) at (3,1) {$u_2^A$};
            \node (uC1) at (-1,1) {$u_1^C$};
            \node (uC2) at (-4,1) {$u_2^C$};
            \node (uC3) at (-7,1) {$u_3^C$};
            \node (vA) at (2,3) {$v^A$};
            \node (vC1) at (-1,3) {$v_1^C$};
            \node (vC2) at (-4,3) {$v_2^C$};
            \node (vC3) at (-7,3) {$v_3^C$};
            \node (v1B1) at (-2,3) {$v_1^{B_1}$};
            \node (v1B2) at (-5,3) {$v_2^{B_1}$};
            \node (v1B3) at (-8,3) {$v_3^{B_1}$};
            \node (v2B1) at (0,3) {$v_1^{B_2}$};
            \node (v2B2) at (-3,3) {$v_2^{B_2}$};
            \node (v2B3) at (-6,3) {$v_3^{B_2}$};
            \node (w1) at (-1,5) {$w_1$};
            \node (w2) at (-4,5) {$w_2$};
            \node (w3) at (-7,5) {$w_3$};
            \draw[->] (T) -- (uA1);
            \draw[->] (T) -- (uA2);
            \draw[->] (T) -- (uC1);
            \draw[->] (T) -- (uC2);
            \draw[->] (T) -- (uC3);
            \draw[->,gray] (T) -- (vC1);
            \draw[->,gray] (T) -- (vC2);
            \draw[->,gray] (T) -- (vC3);
            \draw[->] (uA1) -- (vA);
            \draw[->] (uA2) -- (vA);
            \draw[->] (uC1) -- (vC1);
            \draw[->] (uC2) -- (vC2);
            \draw[->] (uC3) -- (vC3);
            \draw[->] (uC1) -- (v1B1);
            \draw[->] (uC2) -- (v1B2);
            \draw[->] (uC3) -- (v1B3);
            \draw[->] (uC1) -- (v2B1);
            \draw[->] (uC2) -- (v2B2);
            \draw[->] (uC3) -- (v2B3);
            \draw[->] (uA1) -- (v1B1);
            \draw[->] (uA1) -- (v1B2);
            \draw[->] (uA1) -- (v1B3);
            \draw[->] (uA2) -- (v2B1);
            \draw[->] (uA2) -- (v2B2);
            \draw[->] (uA2) -- (v2B3);
            \draw[->] (vA) -- (w1);
            \draw[->] (vA) -- (w2);
            \draw[->] (vA) -- (w3);
            \draw[->] (vC1) -- (w1);
            \draw[->] (vC2) -- (w2);
            \draw[->] (vC3) -- (w3);
            \draw[->] (vC1) -- (w2);
            \draw[->] (vC2) -- (w3);
            \draw[->] (vC3) -- (w1);
            \draw[->] (v1B1) -- (w2);
            \draw[->] (v1B2) -- (w3);
            \draw[->] (v1B3) -- (w1);
            \draw[->] (v2B1) -- (w1);
            \draw[->] (v2B2) -- (w2);
            \draw[->] (v2B3) -- (w3);
            \draw[->] (w1) -- (H);
            \draw[->] (w2) -- (H);
            \draw[->] (w3) -- (H);
            \draw[->,gray] (v1B1) -- (H);
            \draw[->,gray] (v1B2) -- (H);
            \draw[->,gray] (v1B3) -- (H);
            \draw[->,gray] (v2B1) -- (H);
            \draw[->,gray] (v2B2) -- (H);
            \draw[->,gray] (v2B3) -- (H);
        }
    \end{center}
    All maximal length paths in $\mathbb{E}_{t}$ have length $3$ or $4$. Therefore $\Omega_n(G;R) = 0$ when $n\geq 5$.

    When $t$ is prime, we now construct an element of $\Omega_4(\mathbb{E}_{t};\mathbb{Z}_t)$ as an upper inductive element by identifying the unique up to sign upper inductive structure it lies over.
    We first note that, if the vertex $H$ and all its incoming edges are removed from $\mathbb{E}_{t}$, the remaining subdigraph is the union of the digraphs
    \begin{center}
        \tikz {
            \node (T) at (0,0) {$T$};
            \node (uCi) at (1.5,1) {$u_i^C$};
            \node (uA1) at (-0.5,1) {$u_1^A$};
            \node (uA2) at (0.5,1) {$u_2^A$};
            \node (uCi-1) at (-1.5,1) {$u_{i-1}^C$};
            \node (vA) at (0,2.5) {$v^A$};
            \node (vB1i-1) at (-1,2.5) {$v_{i-1}^{B_1}$};
            \node (vB2i) at (1,2.5) {$v_i^{B_2}$};
            \node (vCi-1) at (-2,2.5) {$v_{i-1}^C$};
            \node (vCi) at (2,2.5) {$v_i^C$};
            \node (w) at (0,3.5) {$w_i$};
            \draw[->] (T) -- (uCi-1);
            \draw[->] (T) -- (uA1);
            \draw[->] (T) -- (uA2);
            \draw[->] (T) -- (uCi);
            \draw[->] (uCi-1) -- (vCi-1);
            \draw[->] (uCi-1) -- (vB1i-1);
            \draw[->] (uA1) -- (vB1i-1);
            \draw[->] (uA1) -- (vA);
            \draw[->] (uA2) -- (vA);
            \draw[->] (uA2) -- (vB2i);
            \draw[->] (uCi) -- (vB2i);
            \draw[->] (uCi) -- (vCi);
            \draw[->] (vCi-1) -- (w);
            \draw[->] (vB1i-1) -- (w);
            \draw[->] (vA) -- (w);
            \draw[->] (vB2i) -- (w);
            \draw[->] (vCi) -- (w);
            \draw[->,gray] (T) to [out=180,in=240] (vCi-1);
            \draw[->,gray] (T) to [out=0,in=290] (vCi);
        }
    \end{center}
    for $i=1\dots,t$.
    Each of the digraphs above have up to sign a single generator of their dimension $3$ path chains, which can be constructed as an upper inductive element as follows.
    
    Consider the $\mathbb{E}_t$ directed squares and directed triangles
    \begin{align*}
        S^A &= e_{T,u_1^A,v^A} - e_{T,u_2^A,v^A} \\
        S^{B_1}_i &= e_{T,u_{i}^C,v_{i}^{B_1}} - e_{T,u_1^A,v_{i}^{B_1}} \\
        S^{B_2}_i &= e_{T,u_2^A,v_{i}^{B_2}} - e_{T,u_{i}^C,v_{i}^{B_2}} \\
        T_i^C &= e_{T,u_{i}^C,v_{i}^C}
    \end{align*}
    which by Proposition~\ref{prop:Dim2Base} are up sign the only generators of $\Omega_2^{T,v}(\mathbb{E}_t;R)$ for any $v\in V_G$.
    The generators of the dimension $3$ path chains of the digraphs above are given by the upper extensions
    \[
        E_i = [-T_{i-1}^C+S^{B_1}_{i-1}+S^A+S^{B_2}_i+T_i^C]^{w_i} 
    \]
    respectively where $i=1\dots,t$, over the strongly connected $w_i$-complete face multigraphs
    \begin{center}
        \tikz {
            \node (1) at (0,0) {$-T^C_{i-1}$};
            \node (2) at (3,0) {$S^{B_1}_{i-1}$};
            \node (3) at (6,0) {$S^A$};
            \node (4) at (9,0) {$S^{B_2}_i$};
            \node (5) at (12,0) {$T_i^C.$};
            \draw[-] (1) -- (2) node[midway,above] {$T \to u^C_{i-1}$};
            \draw[-] (2) -- (3) node[midway,above] {$T \to u^A_1$};
            \draw[-] (3) -- (4) node[midway,above] {$T \to u^A_2$};
            \draw[-] (4) -- (5) node[midway,above] {$T \to u^C_{i}$};
            }
    \end{center}
    In particular, as no other face multihypergraphs can be constructed up to sign, the only generators up to sign of $\Omega_3^{T,w_i}(G;R)$ are those obtained using the extensions above.
    Moreover, the only face multihypergraph up to sign and mutation that can be constructed on the elements $E_i$ for $i=1,\dots,2t$ is
    \begin{center}
        \begin{tikzpicture}
            \node (IE1) at (4.5,5) {};
            \node (IE2) at (6.5,3) {};
            \node (IEt-1) at (6.5,1) {};
            \node (IEt) at (4.5,-1) {};
            \node (IEt+1) at (2,-1) {};
            \node (IEt+2) at (0,1) {};
            \node (IE2t-1) at (0,3) {};
            \node (IE2t) at (2,5) {};
            \filldraw[fill=blue!10] ($(IE1)+(0,0.65)$)
            to[out=0,in=90] ($(IE2)+(0.75,0)$)
            -- ($(IEt-1)+(0.75,0)$)
            to[out=270,in=0] ($(IEt)+(0,-0.675)$)
            -- ($(IEt+1)+(0,-0.675)$)
            to[out=180,in=270] ($(IEt+2)+(-0.875,0)$)
            -- ($(IE2t-1)+(-0.855,0)$)
            to[out=90,in=180] ($(IE2t)+(0,0.65)$)
            -- ($(IE1)+(0,0.65)$);
            \node (SA1) at (3.3,2) {$\left(\{\underbrace{S^A,\dots,S^A}_{t}\},v^A\right)$};
            \node (E1) at (4.5,5) {$E_1$};
            \node (E2) at (6.5,3) {$E_2$};
            \node (Et-1) at (6.5,1) {$E_3$};
            \node (Et) at (4.5,-1) {$E_4$};
            \node (Et+1) at (2,-1) {$E_5$};
            \node (Et+2) at (0,1) {$E_{t-2}$};
            \node (E2t-1) at (0,3) {$E_{t-1}$};
            \node (E2t) at (2,5) {$E_{t}$};
            \node (D) at (1,0) {$\ddots$};
            \node (Et+1-) at (1.25,-0.25) {};
            \node (Et+2-) at (0.75,0.25) {};
            \draw[-] (E1) -- (E2) node[midway,above right] {$T^C_1$};
            \draw[-] (E2) -- (Et-1) node[midway,right] {$T^C_2$};
            \draw[-] (Et-1) -- (Et) node[midway,below right] {$T^C_{3}$};
            \draw[-] (Et) -- (Et+1) node[midway,below] {$T^C_4$};
            \draw[-] (Et+2) -- (E2t-1) node[midway,left] {$T^C_{t-2}$};
            \draw[-] (E2t-1) -- (E2t) node[midway,above left] {$T^C_{t-1}$};
            \draw[-] (E2t) -- (E1) node[midway,above] {$T^C_{t}$};
            \draw[-] (Et+1) -- (Et+1-);
            \draw[-] (Et+2-) -- (Et+2);
        \end{tikzpicture}
    \end{center}
    which is an inductive structure for the upper extension
    \[
        [E_1+\cdots+E_{t}]^H.
    \]
    The face multihypergraph above cannot be constructed with coefficients $\mathbb{Z}$, $\mathbb{Q}$ or $\mathbb{Z}_p$ for prime $p\neq t$, as there is no combination of labeled hyperedges that contains all of the $t$ required $S^A$ labels simultaneously unless $p$ divides $t$, which is impossible as $p$ and $t$ are distinct primes.
    
    In Examples~\ref{exam:VerticesAndLines}~and~\ref{exam:DirectedSquare}, a basis for $\Omega_i(G;R)$ when $i=0,1,2$ is constructed in the same way for all coefficient rings.
    All these basis elements are inductive elements and the basis has the same size irrespective of the choice of coefficients.
    In the case $i=3$, similarly to the argument above for the elements $E_i$, we can deduce that $\Omega_3(\mathbb{E}_t;R)$ has a basis consisting of $E_i$ and the unique up to sign generators in the dimension $3$ path chains obtained from $\mathbb{E}_t$ subdigraphs
    \begin{center}
        \tikz {
            \node (T) at (0,0) {$T$};
            \node (u1) at (-1.5,1) {$u_1^C$};
            \node (u2) at (-0.75,1) {$u_1^A$};
            \node (u3) at (0.75,1) {$u_j^C$};
            \node (u4) at (1.5,1) {$u_2^A$};
            \node (v1) at (-1.5,2.5) {$v_1^{B_2}$};
            \node (v2) at (-0.75,2.5) {$v_1^{B_1}$};
            \node (v3) at (0.75,2.5) {$v_j^{B_1}$};
            \node (v4) at (1.5,2.5) {$v_j^{B_2}$};
            \node (H) at (0,3.5) {$H$};
            \draw[->] (T) -- (u1);
            \draw[->] (T) -- (u2);
            \draw[->] (T) -- (u3);
            \draw[->] (T) -- (u4);
            \draw[->] (u1) -- (v1);
            \draw[->] (u2) -- (v2);
            \draw[->] (u3) -- (v3);
            \draw[->] (u4) -- (v4);
            \draw[->] (u1) -- (v2);
            \draw[->] (u2) -- (v3);
            \draw[->] (u3) -- (v4);
            \draw[->] (u4) -- (v1);
            \draw[->] (v1) -- (H);
            \draw[->] (v2) -- (H);
            \draw[->] (v3) -- (H);
            \draw[->] (v4) -- (H);
        }
        \tikz {
            \node (T) at (0,0) {$T$};
            \node (u1) at (-1.75,1) {$v_1^C$};
            \node (u2) at (-1,1) {$v_2^C$};
            \node (u3) at (1,1) {$v_{t-1}^C$};
            \node (u4) at (1.75,1) {$v_t^C$};
            \node (ud) at (0,1) {$\cdots$};
            \node (v1) at (-1.75,2.5) {$w_1$};
            \node (v2) at (-1,2.5) {$w_2$};
            \node (v3) at (1,2.5) {$w_{t-1}$};
            \node (v4) at (1.75,2.5) {$w_t$};
            \node (vd) at (0,2.5) {$\cdots$};
            \node (H) at (0,3.5) {$H$};
            \draw[->] (T) -- (u1);
            \draw[->] (T) -- (u2);
            \draw[->] (T) -- (u3);
            \draw[->] (T) -- (u4);
            \draw[->] (u1) -- (v1);
            \draw[->] (u2) -- (v2);
            \draw[->] (u3) -- (v3);
            \draw[->] (u4) -- (v4);
            \draw[->] (u1) -- (v2);
            \draw[->] (u3) -- (v4);
            \draw[->] (u4) -- (v1);
            \draw[->] (v1) -- (H);
            \draw[->] (v2) -- (H);
            \draw[->] (v3) -- (H);
            \draw[->] (v4) -- (H);
        }
        \tikz {
            \node (T) at (0,0) {$u_1^A$};
            \node (u1) at (-0.75,1) {$v_1^{B_1}$};
            \node (u2) at (0,1) {$v^A$};
            \node (u3) at (0.75,1) {$v_j^{B_1}$};
            \node (v1) at (-0.375,2.5) {$w_2$};
            \node (v2) at (0.375,2.5) {$w_{j+1}$};
            \node (H) at (0,3.5) {$H$};
            \draw[->] (T) -- (u1);
            \draw[->] (T) -- (u2);
            \draw[->] (T) -- (u3);
            \draw[->] (u1) -- (v1);
            \draw[->] (u2) -- (v1);
            \draw[->] (u2) -- (v2);
            \draw[->] (u3) -- (v2);
            \draw[->] (v1) -- (H);
            \draw[->] (v2) -- (H);
            \draw[->,gray] (u1) to [out=100,in=200] (H);
            \draw[->,gray] (u3) to [out=80,in=340] (H);
        }
        \tikz {
            \node (T) at (0,0) {$u_2^A$};
            \node (u1) at (-0.75,1) {$v_1^{B_2}$};
            \node (u2) at (0,1) {$v^A$};
            \node (u3) at (0.75,1) {$v_j^{B_2}$};
            \node (v1) at (-0.375,2.5) {$w_1$};
            \node (v2) at (0.375,2.5) {$w_{j}$};
            \node (H) at (0,3.5) {$H$};
            \draw[->] (T) -- (u1);
            \draw[->] (T) -- (u2);
            \draw[->] (T) -- (u3);
            \draw[->] (u1) -- (v1);
            \draw[->] (u2) -- (v1);
            \draw[->] (u2) -- (v2);
            \draw[->] (u3) -- (v2);
            \draw[->] (v1) -- (H);
            \draw[->] (v2) -- (H);
            \draw[->,gray] (u1) to [out=100,in=200] (H);
            \draw[->,gray] (u3) to [out=80,in=340] (H);
        }
        \tikz {
            \node (T) at (0,0) {$u_i^C$};
            \node (u1) at (-0.75,1) {$v_i^{B_1}$};
            \node (u2) at (0,1) {$v_i^C$};
            \node (u3) at (0.75,1) {$v_i^{B_2}$};
            \node (v1) at (-0.375,2.5) {$w_{i+1}$};
            \node (v2) at (0.375,2.5) {$w_i$};
            \node (H) at (0,3.5) {$H$};
            \draw[->] (T) -- (u1);
            \draw[->] (T) -- (u2);
            \draw[->] (T) -- (u3);
            \draw[->] (u1) -- (v1);
            \draw[->] (u2) -- (v1);
            \draw[->] (u2) -- (v2);
            \draw[->] (u3) -- (v2);
            \draw[->] (v1) -- (H);
            \draw[->] (v2) -- (H);
            \draw[->,gray] (u1) to [out=100,in=200] (H);
            \draw[->,gray] (u3) to [out=80,in=340] (H);
        }
    \end{center}
    where $i=1,\dots,t$ and $j=2,\dots,t$.
    In particular, the above basis generators exist over any field as they can be obtained as upper extensions over upper inductive face multigraphs similarly to elements $E_i$.
    Moreover, by counting the number of basis elements, we deduce that $\dim(\Omega_{3}(\mathbb{E}_t;K)) = 2t+3(t-1)+1=5t-2$ for any field $K$.
    
    Finally we conclude that, as the dimension of $\Omega_{i}(\mathbb{E}_t;K)$ depends only on the choice of field $K$ when $i=4$, the path Euler characteristic of $\mathbb{E}_t$ with coefficients $\mathbb{Z}_t$ for prime $t$ differs from that of $\mathbb{E}_t$ with coefficients $\mathbb{Q}$ and $\mathbb{Z}_p$ when $p \neq t$.

    More generally, let $t\geq 2$ be an arbitrary integer with unique prime factorisation
    \[
        t = p_1^{i_1}\cdots p_{k}^{i_k}
    \]
    where $p_1,\dots,p_k$ are distinct primes and $i_1,\dots,i_k$ positive integers.
    Similarly to the argument above, we obtain that
    \[
        \dim(\Omega_{4}(\mathbb{E}_{t};K)) =
        \begin{cases}
            1 & \text{if} \; K = \mathbb{Z}_{p_j} \: \text{for} \: j \in \{ 1,\dots,k\},
            \\
            0 & \text{otherwise} 
        \end{cases}
    \]
    and
    \[
        \dim(\Omega_i(\mathbb{E}_{t};K)) = \dim(\Omega_i(\mathbb{E}_{t};K'))
    \]
    for coefficient fields $K$, $K'$ with any integer $i \neq 4$.

\end{exmp}

\bibliographystyle{amsplain}
\bibliography{References}

\end{document}